\theoremstyle{plain}
\newtheorem{thm}{Theorem}[section]
\theoremstyle{plain}
\newtheorem{lem}[thm]{Lemma}
\newtheorem{prop}[thm]{Proposition}
\newtheorem{cor}[thm]{Corollary}
\theoremstyle{definition}
\newtheorem{defi}{Definition}[section]
\newtheorem*{rem}{Remark}
\newenvironment{Assumptions}
{%
\setcounter{enumi}{0}

\begin{enumerate}}%
{\end{enumerate} }
\newcommand{\eps}{\ensuremath{\epsilon}}
\newcommand{\R}{\ensuremath{\mathbb{R}}}
\numberwithin{equation}{section} \allowdisplaybreaks
\journal{Journal of Differential Equations}
\begin{document}

\begin{frontmatter}

\title{Regularization by $\frac{1}{2}$-Laplacian and vanishing viscosity approximation of  HJB equations}
\author{Imran H. Biswas}

\address{Centre for Applicable Mathematics,
 Tata Institute of Fundamental Research,
  P.O.\ Box 6503, GKVK Post Office,
  Bangalore 560065, India}

\begin{abstract}
We investigate the regularizing effect of adding small fractional Laplacian, with critical fractional exponent $\frac 12$, to a general first order HJB equation. Our results include some regularity estimates for the viscosity solutions of such  perturbations, making the solutions classically well-defined. Most importantly, we use these regularity estimates to study the vanishing viscosity approximation to first order HJB equations by  $\frac 12$-Laplacian and derive an explicit  rate convergence for the vanishing viscosity limit.
\end{abstract}

\begin{keyword}
viscosity solutions,  HJB equations, vanishing viscosity,
 integro-partial differential equation, fractional Laplacian.

\MSC[2000]{45K05, 46S50, 49L20, 49L25, 91A23, 93E20}

\end{keyword}

\end{frontmatter}
\section{Introduction}
Within the field of fully nonlinear partial differential equations, the Hamilton-Jacobi-Bellman type equations are one of the most widely studied class. Among others, the notion of viscosity solutions has been of immense help to achieve a deeper understanding of fully nonlinear PDEs.  It is well documented in the literature that  the regularizing effect of adding small diffusion to first order fully nonlinear HJB equations has played a pivotal role in the development and understanding of viscosity solution theory. In this article we also set out to study a similar problem by adding a small fractional diffusion to a class of fully nonlinear first order HJB equations and investigate the regularizing effect and convergence properties of such approximations.   We are interested in the following initial value problem
      \begin{align}
  \label{eq:HJB-eq} \begin{cases}u_t + H(t,x, u(t,x), \nabla u(t,x)) &= 0\quad\text{if}\quad (t,x) \in (0,T]\times\mathbb{R}^n,\\
   u(0,x) &= u_0(x)
   \end{cases}
 \end{align} and it's vanishing viscosity approximation

 \begin{align}
 \label{eq:HJB-eq-viscous}\begin{cases}u_t^\epsilon + H(t,x, u^\epsilon(t,x), \nabla u^\epsilon(t,x))+\epsilon (-\Delta )^{\frac{s}{2}}u^\epsilon &= 0\quad\text{if}\quad (t,x) \in (0,T]\times\mathbb{R}^n,\\
 u(0,x) & = u_0(x).
 \end{cases}
 \end{align}

     In the above $T$ is a positive constant, the Hamiltonian $H$ is a real valued function on $\R\times\R^n\times\R\times \R^n$ and $\eps > 0$ is a small positive number. The precise structural assumptions on $H$ will be detailed in Section 2, but roughly speaking, it is a Lipschitz continuous function in all its variables and enjoys some monotonicity property in $u$.  The initial data $u_0(x)$ is a Lipschtiz continuous function on $\R^n$. The number $\frac s2$ in \eqref{eq:HJB-eq-viscous} is the fractional power of the diffusion operator and $s$ is supposed to be ranging within $[1,2]$.
     
      Among others, a rich source for equations of type \eqref{eq:HJB-eq} is the area of optimal control. The value function of a controlled dynamical system or that of a differential game solves an equation of the form \eqref{eq:HJB-eq}. Also, the equations of type \eqref{eq:HJB-eq-viscous} are of paramount importance due to their appearance in optimal control of stochastic dynamical systems with $\alpha$-stable noise.  The problem \eqref{eq:HJB-eq-viscous} is clearly a perturbation of  \eqref{eq:HJB-eq}. From the optimal control viewpoint, if the controlled deterministic dynamical system is perturbed by a small additive L\'{e}vy noise then the resulting value function of the perturbed control problem would satisfy an equation of type \eqref{eq:HJB-eq-viscous}.  Our aim in this article is to study the stability of such perturbation and it's regularizing effect on the value function.

      For $s=2$, the problem \eqref{eq:HJB-eq-viscous} becomes the classical parabolic approximation of  \eqref{eq:HJB-eq} and the classical theory for semilinear parabolic equations applies. As a result, for s=2, the Cauchy problem \eqref{eq:HJB-eq-viscous} is well-posed and the solution $u^\epsilon$ is smooth (cf. \cite{F'man:1973jk}). It is also well-known that the sequence of functions $(u^\epsilon)_{\epsilon> 0}$ converges locally uniformly to a function $u$ as $\eps\downarrow 0$, which is characterized  as the unique {\it viscosity solution} of \eqref{eq:HJB-eq}. There are a number of methods available (cf. \cite{evans:2010hj,Jakobsen:2005jy}) to estimate the rate of convergence, and one can optimally estimate the error to be of the order $\eps^{\frac 12}$.
      
      The case $s < 2$ is much less classical.  The operator $(-\Delta)^{s/2}$ has the following representation (cf. \cite{Landkof:1966}):
    \begin{align}
      \label{eq:non-local-representaion}    (-\Delta)^{\frac{s}{2}}u(x) = C(n,s)\int_{\mathbb{R}^n} \frac{u(x)-u(x+y)}{|y|^{n+s}}dy.
    \end{align} The constant $C(n,s)$ depends only on $n$ and $s$. The above integral in   \eqref{eq:non-local-representaion} should be understood in the principal value sense. Clearly, in view of \eqref{eq:non-local-representaion},  the problem \eqref{eq:HJB-eq-viscous} is non-local in nature or, in other words, an integro-partial differential equation. However, the notion of viscosity solution does make sense for such equations and the literature addressing this notion and related issues is fairly well developed by now. We refer to the articles \cite{BCI:P07, BI:P07, Imbert: 2005ft, Sayah:2010gt, Jakobsen:2005jy, silvestre:2010} and the references therein for more on this topic. The issues addressed in these papers range from standard wellposedness theory to more subtle questions related to regularity.
    
      For $1<s< 2$, the question on regularization was first answered by C. Imbert \cite{Imbert: 2005ft}. It was shown, under certain conditions, that the unique viscosity solution of  \eqref{eq:HJB-eq-viscous} is indeed of class $C^{1,2}$. In other words, the perturbed equation  \eqref{eq:HJB-eq-viscous} is classically welldefined and the perturbation has the same effect as classical parabolic regularization.  In  \cite{Imbert: 2005ft}, the author also gives a condition on the Hamiltonian $H$ under which $u^\epsilon$ becomes $C^\infty$. The $L^\infty$-error bound on $u^\epsilon- u$ is estimated to be of the order $\eps^{\frac{1}{s}}$. The error estimate for vanishing viscosity approximation in  \cite{Imbert: 2005ft} is optimal, and it was a significant improvement over the earlier result in \cite{Jakobsen:2005jy} which was of the order $\eps^{\frac{1}{2}}$. The results in \cite{Imbert: 2005ft} are new but mostly along the expected lines for the following reasons. The equation \eqref{eq:HJB-eq-viscous} could be seen as a perturbation of the fractional heat equation
       $$u_t +\eps (-\Delta )^{\frac{s}{2}}u = 0,$$
       
 and, if $ s> 1$, this equation has similar regularity property. Therefore, going by classical parabolic regularization results, it is only natural to anticipate  that  \eqref{eq:HJB-eq-viscous} will have smooth solutions and results in  \cite{Imbert: 2005ft} confirm this.  For $s < 1$, the dominant derivative is of first order and featured by the Hamiltonian $H$ and therefore it is not fair to expect any further regularization.  In this case,  it is well documented in the literature ( cf. \cite{Kiselev:2008}) that equation \eqref{eq:HJB-eq-viscous}  will not have smooth solutions in general, the viscosity solutions are at best Lipschtiz continuous for  Lipschtiz initial data.

       The case $s=1$, as has been rightly termed, is critical. The orders of the original HJB operator and added (nonlocal) fractional Laplace operator in  \eqref{eq:HJB-eq-viscous} are same, and it is a priori not clear at all whether there is any smoothing effect. On the intuitive level, one is more likely to think the opposite that there may not be any regularizing effect after all.  The problem of determining the regularity for this critical case is much more delicate and the strategy of \cite{Imbert: 2005ft} does not apply in this case. It is only recently that there has been a breakthrough by L.~ Silvestre\cite{silvestre:2010} on this question. In \cite{silvestre:2010}, the author shows if  $H$ is independent of $(t,x,u)$ then the unique viscosity solution of \eqref{eq:HJB-eq-viscous} is indeed $C^{1,\alpha}$. In other words, the viscosity solutions are regular enough to satisfy the equation in the classical sense.  We must admit that the techniques used by L. Silvestre are fairly delicate in nature.  The regularity estimate is obtained by establishing a diminish of oscillation lemma for the linearized version of \eqref{eq:HJB-eq-viscous}.

        In this article we will concentrate on the case of critical fractional order i.e $s= 1$ and extend the results of \cite{Imbert: 2005ft,silvestre:2010}. In other words, we want to investigate the regularity of the following problem:
                       \begin{align}
 \label{eq:HJB-eq-viscous-1}\begin{cases}u_t^\eps + H(t,x, u^\eps(t,x), \nabla u^\eps(t,x))+\eps (-\Delta )^{\frac{1}{2}}u^\eps(t,x) &= 0\quad\text{if}\quad (t,x) \in (0,T]\times\mathbb{R}^n,\\
 u^\eps(0,x) & = u_0(x).
 \end{cases}
 \end{align}
                     The contributions in this paper has two components. In the first part  we extend and adapt the methodology of \cite{silvestre:2010} to cases where the Hamiltonian $H$ can have dependence on $(t,x,u)$ as well as $\nabla u$ and
prove a $C^{1,\alpha}$ estimate for the viscosity solution. Secondly, we estimate the error $||u^\eps -u||_{L^\infty}$ for the vanishing viscosity approximation, which comes out be of type $C \epsilon |\log \eps|$.\footnote{It was brought to our notice by the referee that  the same results on error estimate have been derived earlier by Droniou and Imbert \cite{Dro2006}. }. This result on error estimate indeed establishes that, for critical exponent $s= 1$, the error estimate for vanishing viscosity approximation is not linear in $\epsilon$.
     
      \section{Technical framework and main results}

        We begin by introducing the notations that are going to be used in the rest of this paper. By $C, K, N$  we mean various constants depending on the data. There will be occasions where the constant may change from line to line but the notation is kept unchanged. The Euclidean norm on any $\mathbb{R}^d$-type space is
denoted by $|\cdot|$.  For any $r > 0$ and $x\in \R^n$, we use the notation $B_r(x)$ for the open ball of radius $r$ around $x$. In the case when $x= 0$, we simply write $B_r$ in place of $B_r(0)$ and define $Q_r =[-r, 0]\times B_r$. For any subset $Q\subset \mathbb{R}\times
\mathbb{R}^n$ and for any bounded, possibly vector valued
function $w$ on $Q$, we define the following norms:
\begin{align*}
	|w|_0 := \sup_{(t,x)\in Q} |w(t,x)|,\qquad
	|w|_{0,\alpha} = |w|_0 + \sup_{(t,x)\neq(s,y)}
	\frac{|w(t,x)-w(s,y)|}{|t-s|^{\alpha}+|x-y|^\alpha},
\end{align*} where $\alpha \in (0,1]$ is a constant. The function space $C^{1,\alpha}(Q)$ is the space of bounded and differentiable functions $w$  such that $D_{t,x} w = (\partial_t w, \nabla_x w)$ is  H\"{o}lder continuous of exponent $\alpha$. This space is endowed with the norm
    \begin{align*}
     ||w||_{C^{1,\alpha}(Q)} = |w|_0 + |D_{t,x}w|_{0,\alpha}.
    \end{align*} Denote by $C^{0,\alpha}(Q)$ the space of all functions on $Q$ such that $|w|_{0,\alpha} < \infty$.  Also denote the set of all upper and lower semicontinuous functions on $Q$ respectively by $USC(Q)$ and $LSC(Q)$. A lower index would mean polynomial growth at infinity, therefore the spaces $USC_p(Q)$ and $ LSC_p(Q)$ contain the functions $w$ respectively from $USC(Q)$ and $LSC(Q)$ satisfying the growth condition
    $$|w(t,x)| \le C (1+ |x|^p).$$
    We identify the spaces $USC_0(Q)$ and $LSC_0(Q)$  respectively with $USC_b(Q)$ and $LSC_b(Q)$; `$b$' is an index signifying boundedness.  We want the initial value problem  \eqref{eq:HJB-eq}, interpreted in the viscosity sense, to be well-posed and to have  Lipschitz continuous solutions. To this end, we list the following assumptions:
    \vspace{.5 cm}
    \begin{Assumptions}
    \item\label{A1} The Hamiltonian $H: \mathbb{R}\times\mathbb{R}^n\times\mathbb{R}\times \mathbb{R}^n\rightarrow \R$ is continuous and there is a positive constant $K $ such that
      $$ \sup_{t\in [0,T], x\in \R^n}|H(t,x,0, 0)| < K.$$
    \item \label{A2} There exists $\lambda\ge 0$ such that for all $(t,x,p)\in \R\times \R^n\times\R^n$ and $u,v \in \R$
         $$H(t,x,v,p)-H(t,x,u,p) = \lambda (v-u).$$
       \item\label{A3}There exists a constant $C > 0$ such that for all $(x,p,q)\in\R^n\times \R^n\times \R^n$ and $t,s \in \R$,
           $$|H(t,x,u,p)-H(s,y,u,p) |\le C(1+|p|)(|x-y|+|t-s|).$$
        \item\label{A4} For every $R > 0$, there is a constant $A_R$ such that if $p,q\in B_R(0)$ then
           $$|H(t,x,u,p)-H(t,x,u,q)|\le A_R|p-q|,$$ uniformly in $(t,x,u)$. 
           \item \label{A5}There is a positive constant $K_0$ such that 
           $$||u_0||_{W^{1,\infty}(\R^n)} \le K_0.$$

    \end{Assumptions}

       \begin{rem}

   The assumptions \ref{A1}-\ref{A5} are natural and standard, except perhaps \ref{A2} where the Hamiltonian $H$ is assumed to be linear in $`u'$. Ideally, if $H$ is only monotonically increasing in $u$ then  the initial value problem \eqref{eq:HJB-eq} is well-posed.  However, we are interested to investigate the regularizing effect of $\frac{1}{2}$-Laplacian on this problem and the assumption \ref{A2} will be necessary for our methodology to work.
  \end{rem}

     We now define the notion of viscosity solution for nonlocal equations of type \eqref{eq:HJB-eq-viscous}. We point out that there could be more than one ways to formulate the definition of sub-/supersolution of the equation \eqref{eq:HJB-eq-viscous}, but various apparently different formulations lead to the same notion.  We use the formulation from \cite{Jakobsen:2005jy} to define the sub- and supersolutions. To this end, we introduce the following quantities. For $\kappa\in (0,1)$, let
     \begin{align*}
      \mathcal{I}_\kappa^\epsilon(\varphi) = -\epsilon C(n, 1) \int_{B(0,\kappa)}\frac{\big(\varphi(t,x+z)-\varphi(t,x)\big)}{|z|^{(n+1)}} dz,\\
        \mathcal{I}^{\kappa,\epsilon}(u) = -\epsilon C(n,1) \int_{B(0,\kappa)^{c}}\frac{\big(u(t,x+z)-u(t,x)\big)}{|z|^{(n+1)}} dz.
     \end{align*}

        By the representation \eqref{eq:non-local-representaion}, for any $\kappa$,  one can  rewrite $\epsilon (-\Delta)^{\frac{1}{2}}\varphi$ as
    \begin{align*}
        \epsilon (-\Delta)^{\frac{1}{2}}\varphi =    \mathcal{I}_\kappa^\epsilon(\varphi)+   \mathcal{I}^{\kappa,\epsilon}(\varphi)
    \end{align*} and define viscosity solutions as follows.
     
         \begin{defi}[viscosity solution]\label{defi:viscosolution}
      \begin{itemize}
        \item[$i.)$] A function $u\in USC_b([a,b]\times \mathbb{R}^n)$ is a viscosity subsolution of  \eqref{eq:HJB-eq-viscous-1} if for any $\varphi\in C^{1,2}([a,b]\times \mathbb{R}^n)$, whenever $(t,x)\in(a,b)\times \mathbb{R}^n$ is a global maximum point of $u-\varphi$ it holds that
        \begin{align*}
        \varphi_t(t,x)+H(t,x, u(t,x),\nabla \varphi(t,x)) +   \mathcal{I}_\kappa^\epsilon(\varphi)+   \mathcal{I}^{\kappa,\epsilon}(u) \le 0
        \end{align*}  for all $\kappa\in (0,1)$.
          \item[$ii.)$] A function $u\in LSC_b([a,b]\times \mathbb{R}^n)$ is a viscosity supersolution of  \eqref{eq:HJB-eq-viscous-1} if for any $\varphi\in C^{1,2}([a,b]\times \mathbb{R}^n)$, whenever $(t,x)\in(a,b)\times \mathbb{R}^n$ is a global minimum point of $u-\varphi$, it holds that
        \begin{align*}
        \varphi_t(t,x)+H(t,x, u(t,x),\nabla \varphi(t,x)) +   \mathcal{I}_\kappa^\epsilon(\varphi)+   \mathcal{I}^{\kappa,\epsilon}(u) \ge 0
        \end{align*}
        for all $\kappa\in (0,1)$.
           \item[$iii.)$] A function $u\in C_b([a,b]\times \mathbb{R}^n)$ is a viscosity solution of \eqref{eq:HJB-eq-viscous-1} if it is both a sub and supersolution.
     \end{itemize}
    \end{defi}

      The Definition \ref{defi:viscosolution} is also applicable to the case $\epsilon =0$ i.e. when the fractional diffusion term is absent. In this case however, the condition $\kappa \in (0,1)$ becomes redundant. Note that the test function appears  in the nonlocal part of  \eqref{eq:HJB-eq-viscous} and this is unavoidable due to the singular nature of the weight function $|z|^{-(n+1)}$ in \eqref{eq:non-local-representaion}. Some growth assumptions are needed on the sub and supersolutions for the nonlocal term $ \mathcal{I}^{\kappa,\epsilon}(u)$ to be finite;  boundedness assumption is not the most general but sufficient  for our framework.

    As usual, any classical solution is also a viscosity solution and any smooth viscosity solution is a classical solution. Furthermore, an equivalent definition is obtained by replacing ``global maximum/minimum" by  ``strict global maximum/minimum" in the above definition.   We may also assume $\varphi = u$ at the maximum/minimum point. Next, we give an alternative (equivalent) definition which will be used to prove the existence of viscosity solutions via Perron's method.

\begin{lem}[alternative definition]\label{lem:alt-defi}
   A function $v\in USC_b([a,b]\times \mathbb{R}^n)$ (or $v\in LSC_b([a,b]\times \mathbb{R}^n)$ ) is a viscosity subsolution (supersolution) to  \eqref{eq:HJB-eq-viscous-1} iff for every $(t,x)\in(a,b)\times \mathbb{R}^n $ and $\phi \in C^{1,2}_0([a,b]\times \mathbb{R}^n)$ such that $v-\phi$ has a global maximum (minimum) at $(t,x)$ then
   \begin{align*}
    \phi_t(t,x)+H(t,x, u(t,x), \nabla \phi(t,x)) +   \epsilon (-\Delta)^{\frac{1}{2}}\phi   \le 0 ~~(\ge 0).
   \end{align*}
\end{lem}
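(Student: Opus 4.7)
The lemma asserts an equivalence between two definitions of viscosity subsolution (the supersolution case being entirely analogous), so my proof would have two directions. The forward direction (Definition \ref{defi:viscosolution} $\Rightarrow$ Lemma \ref{lem:alt-defi}) is immediate. Given $\phi \in C^{1,2}_0 \subset C^{1,2}$ with $v-\phi$ attaining a global maximum at $(t,x)$, I would apply the subsolution inequality from Definition \ref{defi:viscosolution} with test function $\varphi = \phi$ and any fixed $\kappa \in (0,1)$. The global maximum condition yields the pointwise estimate $v(t,x+z)-v(t,x) \le \phi(t,x+z)-\phi(t,x)$ for every $z$, which, together with the minus sign appearing in the definition of $\mathcal{I}^{\kappa,\epsilon}$, gives $\mathcal{I}^{\kappa,\epsilon}(v) \ge \mathcal{I}^{\kappa,\epsilon}(\phi)$. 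Substituting this lower bound into the inequality from Definition \ref{defi:viscosolution} and using the identity $\mathcal{I}_\kappa^\epsilon(\phi) + \mathcal{I}^{\kappa,\epsilon}(\phi) = \epsilon(-\Delta)^{1/2}\phi$ produces exactly the inequality asserted in Lemma \ref{lem:alt-defi}.

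The reverse direction (Lemma \ref{lem:alt-defi} $\Rightarrow$ Definition \ref{defi:viscosolution}) requires building a suitable test function in $C^{1,2}_0$. Start with $\varphi \in C^{1,2}$ such that $v-\varphi$ has a global maximum at $(t,x)$; by adding a constant I may assume $\varphi(t,x) = v(t,x)$, so $\varphi \ge v$ everywhere. Fix $\kappa \in (0,1)$. The plan is to construct a family $\phi_\alpha \in C^{1,2}_0$ that (i) agrees with $\varphi$ on $B_\kappa(x)$, (ii) dominates $v$ globally (so that $v - \phi_\alpha$ still achieves its global maximum at $(t,x)$), and (iii) converges pointwise to $v$ outside $B_\kappa(x)$ as $\alpha \to 0$. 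A natural choice is the glued function
\[
\phi_\alpha(s,y) = \zeta(y)\varphi(s,y) + (1-\zeta(y))\,\bar v_\alpha(s,y),
\]
where $\zeta$ is a smooth cutoff with $\zeta \equiv 1$ on $B_\kappa(x)$ and $\supp\zeta \subset B_{2\kappa}(x)$, and $\bar v_\alpha$ is a smooth upper approximation of $v$. Applying Lemma \ref{lem:alt-defi} to $\phi_\alpha$, splitting $\epsilon(-\Delta)^{1/2}\phi_\alpha = \mathcal{I}_\kappa^\epsilon(\phi_\alpha) + \mathcal{I}^{\kappa,\epsilon}(\phi_\alpha)$, and using $\phi_\alpha = \varphi$ on $B_\kappa(x)$ to identify $\mathcal{I}_\kappa^\epsilon(\phi_\alpha) = \mathcal{I}_\kappa^\epsilon(\varphi)$ yields
\[
\varphi_t(t,x) + H(t,x,v(t,x),\nabla\varphi(t,x)) + \mathcal{I}_\kappa^\epsilon(\varphi) + \mathcal{I}^{\kappa,\epsilon}(\phi_\alpha) \le 0.
\]
Sending $\alpha \to 0$ and using dominated convergence (the weight $|z|^{-(n+1)}$ is integrable on $\{|z| > \kappa\}$ and the $\phi_\alpha$ remain uniformly bounded) replaces $\mathcal{I}^{\kappa,\epsilon}(\phi_\alpha)$ by $\mathcal{I}^{\kappa,\epsilon}(v)$, which is the inequality required by Definition \ref{defi:viscosolution} for the fixed $\kappa$. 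Since $\kappa \in (0,1)$ is arbitrary, the conclusion follows.

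The main technical obstacle is the construction of the smooth dominating approximation $\bar v_\alpha$ for an only upper semicontinuous $v$, since a direct mollification need not dominate $v$. I would use the sup-convolution $v^\alpha(s,y) = \sup_{(\sigma,z)}\bigl[v(\sigma,z) - \alpha^{-1}(|y-z|^2 + |s-\sigma|^2)\bigr]$, which is semiconvex and satisfies $v^\alpha \ge v$ with $v^\alpha \searrow v$ as $\alpha \to 0^+$, then further mollify and add a small upward shift calibrated to the semiconvexity modulus to regain smoothness while preserving domination. A secondary technicality is ensuring that the resulting $\phi_\alpha$ lies in $C^{1,2}_0$; if this class requires vanishing at infinity, an outer cutoff combined with the observation that the nonlocal integrals are insensitive to additive constants handles this without affecting the inequality.
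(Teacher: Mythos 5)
The paper does not actually prove this lemma; it cites \cite{Jakobsen:2005jy} for a proof, so your argument is being assessed on its own merits. Your forward direction is correct and standard: the global-maximum inequality gives $v(t,x+z)-v(t,x)\le \phi(t,x+z)-\phi(t,x)$, the minus sign in $\mathcal{I}^{\kappa,\epsilon}$ turns this into $\mathcal{I}^{\kappa,\epsilon}(v)\ge \mathcal{I}^{\kappa,\epsilon}(\phi)$, and substitution into Definition~\ref{defi:viscosolution} yields the claimed pointwise inequality.

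The reverse direction contains a genuine gap at the dominated-convergence step. With $\zeta\equiv 1$ on $B_\kappa(x)$ and $\supp\zeta\subset B_{2\kappa}(x)$, your glued test functions $\phi_\alpha=\zeta\varphi+(1-\zeta)\bar v_\alpha$ do \emph{not} converge to $v$ on all of $\{|z|>\kappa\}$: on the annulus $\kappa<|z|<2\kappa$ they converge to $w:=\zeta\varphi+(1-\zeta)v$, which coincides with $v$ only outside $B_{2\kappa}(x)$. Since $w\ge v$ with $w(t,x)=v(t,x)$, the sign convention gives $\mathcal{I}^{\kappa,\epsilon}(w)\le\mathcal{I}^{\kappa,\epsilon}(v)$, so the limiting inequality
\begin{align*}
\varphi_t(t,x)+H(t,x,v(t,x),\nabla\varphi(t,x))+\mathcal{I}^\epsilon_\kappa(\varphi)+\mathcal{I}^{\kappa,\epsilon}(w)\le 0
\end{align*}
does \emph{not} by itself imply the same inequality with $v$ in place of $w$; the substitution goes the wrong way. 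The repair uses the additional fact that $w\le\varphi$ (convex combination of two upper bounds for $v$) together with $w=v$ outside $B_{2\kappa}(x)$. Splitting both nonlocal terms across the annulus,
\begin{align*}
\mathcal{I}^\epsilon_\kappa(\varphi)+\mathcal{I}^{\kappa,\epsilon}(w)
=\mathcal{I}^\epsilon_{2\kappa}(\varphi)+\mathcal{I}^{2\kappa,\epsilon}(v)
-\epsilon C(n,1)\int_{\kappa<|z|<2\kappa}\frac{\big(w(t,x+z)-w(t,x)\big)-\big(\varphi(t,x+z)-\varphi(t,x)\big)}{|z|^{n+1}}\,dz,
\end{align*}
and the integrand is $\le 0$ because $w\le\varphi$ with equality at $(t,x)$, so the integral term is $\ge 0$ and hence $\mathcal{I}^\epsilon_\kappa(\varphi)+\mathcal{I}^{\kappa,\epsilon}(w)\ge\mathcal{I}^\epsilon_{2\kappa}(\varphi)+\mathcal{I}^{2\kappa,\epsilon}(v)$. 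This turns your limiting inequality into the Definition's inequality for the index $2\kappa$, and since $\kappa\in(0,1)$ was arbitrary you recover it for every index. Your remaining technical remarks (sup-convolution for the smooth upper approximant $\bar v_\alpha$, outer cutoff for membership in $C^{1,2}_0$) are correct and standard.
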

     We refer to \cite{Jakobsen:2005jy} for a proof.

\begin{rem} The definition of viscosity solution is not influenced by the choice of $(0,1)$ as the domain for the parameter $\kappa$. Equivalently, one can replace $(0,1)$ by an interval of type $(0,\delta)$ for $\delta > 0$. All these different choices for domain of $\kappa$ could be proven to be equivalent to alternative definition in Lemma \ref{lem:alt-defi}.  However, in order for our methodology to work, we need to be able to pass to the limit $\kappa \rightarrow 0$ and Definition \ref{defi:viscosolution} is formulated keeping that in mind.
\end{rem}

          For $\ell\in\mathbb{R}^n$, define $H_\ell:  \R\times\R^n\times\R\times \R^n\rightarrow \mathbb{R} $ as
         \begin{align}\label{eq:defi_perturbation}
          H_{\ell}(t,x,r,p) = H(t,x+\ell, r, p)
         \end{align} and consider the  perturbation 
 \begin{align}
 \label{eq:HJB-eq-viscous-pert}v_t + H_{\ell}(t,x, v(t,x), \nabla v(t,x))+\eps (-\Delta )^{\frac{1}{2}}v(t,x) &= 0\quad\text{if}\quad (t,x) \in (0,T]\times\mathbb{R}^n
 \end{align} of \eqref{eq:HJB-eq-viscous-1}.
 We have the following continuous dependence estimate.

 \begin{thm}[continuous dependence]\label{thm:continuous dependence}
  Assume \ref{A1}-\ref{A4}, and let $u, -v\in USC_b([0,T]\times \R^n)$   respectively satisfy
  \begin{align}
  u_t + H(t,x, u, \nabla u)+\eps (-\Delta )^{\frac{s}{2}}u &\le0,\\
  v_t + H_{\ell}(t,x, v, \nabla v)+\eps (-\Delta )^{\frac{s}{2}}v &\ge 0
  \end{align} in the viscosity sense\footnote{ This simply means that $u$ and $v$ are respectively the sub and supersolution of \eqref{eq:HJB-eq-viscous-1} and \eqref{eq:HJB-eq-viscous-pert}}. Also assume $|\nabla u(0,x)|+|\nabla v(0,x)| \le C^\prime$ for some constant $C^\prime$. Then there exists a constant $C$, depending on the data (including $T$ but excluding $\epsilon$), such that
  \begin{align*}
  v-u \le (v(0,x)-u(0,x))^++ C |\ell|.
  \end{align*}
  \end{thm}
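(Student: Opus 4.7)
The plan is to deploy a doubling-of-variables argument à la Crandall--Ishii, adapted to the nonlocal setting of \eqref{eq:HJB-eq-viscous-1}, with the spatial shift $\ell$ built directly into the penalty. For positive parameters $\alpha,\mu,\eta$ (to be sent to limits or optimized at the end), I would consider the auxiliary functional
\[
 \Psi(t,x,y) \;=\; v(t,y)-u(t,x)-\frac{|x-y-\ell|^2}{2\alpha}-\mu\bigl(|x|^2+|y|^2\bigr)-\frac{\eta}{T-t}.
\]
Boundedness of $u$ and $v$ together with the quadratic penalty ensures that $\Psi$ attains its global maximum $M$ at some $(t_0,x_0,y_0)\in[0,T)\times\R^n\times\R^n$; standard penalty comparisons then give $|x_0-y_0-\ell|^2/\alpha$ and $\mu(|x_0|^2+|y_0|^2)$ bounded by a constant depending on $\|u\|_\infty+\|v\|_\infty$. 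Using $|Du(0,\cdot)|+|Dv(0,\cdot)|\le C'$ together with \ref{A1}--\ref{A4} to propagate a uniform-in-$\eps$ Lipschitz bound on $u$ and $v$ in space, the test-function gradients $p_u=(y_0+\ell-x_0)/\alpha-2\mu x_0$ and $p_v=(y_0+\ell-x_0)/\alpha+2\mu y_0$ at the maximum are also controlled, which sharpens the bound to $|x_0-y_0-\ell|\le C\alpha$.

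If $t_0=0$, Lipschitz continuity of the initial data yields directly
\[
 M \;\le\; \sup_{y}\bigl(v(0,y)-u(0,y)\bigr)^{+}+C'|\ell|+\tfrac{1}{2}C'^{2}\alpha,
\]
by writing $u(0,y_0)-u(0,x_0)\le C'(|\ell|+|x_0-y_0-\ell|)$ and maximizing the resulting quadratic expression. If instead $t_0\in(0,T)$, I would apply the viscosity subsolution condition to $u$ at $(t_0,x_0)$ and the viscosity supersolution condition to $v$ at $(t_0,y_0)$ with the variables decoupled by the penalty, and subtract to obtain an inequality of the form
\[
 \frac{2\eta}{(T-t_0)^2}+\bigl[H_{\ell}(t_0,y_0,v,p_v)-H(t_0,x_0,u,p_u)\bigr]+\mathcal{N}_{\kappa}+\mathcal{N}^{\kappa}\;\ge\;0.
\]
Hypotheses \ref{A2}--\ref{A4} give an upper bound $H_{\ell}-H\le\lambda(v(t_0,y_0)-u(t_0,x_0))+C(1+|p_u|)|x_0-y_0-\ell|+2A_{L}\,\mu|x_0+y_0|$, whose trailing error terms are respectively $O(|\ell|)$, $O(\sqrt{\alpha})$, and $O(\sqrt{\mu})$ under the penalty bounds above. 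The monotonicity constant $\lambda\ge0$ from \ref{A2} is either absorbed directly, or --- if $\lambda=0$ --- the preliminary substitution $\tilde u=e^{-Kt}u$, $\tilde v=e^{-Kt}v$ is performed first to introduce artificial monotonicity.

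The main technical obstacle, compared with the purely local first-order setting, lies in the treatment of the nonlocal contributions. The small-ball test-function part $\mathcal{N}_{\kappa}$ collapses cleanly: odd-in-$z$ first-order contributions vanish on integration over $B(0,\kappa)$, leaving a net quadratic term of size $O(\eps\kappa(1/\alpha+\mu))$ which tends to zero as $\kappa\to0^{+}$. For the large-ball tail $\mathcal{N}^{\kappa}$, the decisive observation is that the dominant penalty $|x-y-\ell|^2/(2\alpha)$ is invariant under the simultaneous translation $(x,y)\mapsto(x+z,y+z)$; consequently, the global maximum of $\Psi$ forces
\[
 \bigl[v(t_0,y_0+z)-v(t_0,y_0)\bigr]-\bigl[u(t_0,x_0+z)-u(t_0,x_0)\bigr]\;\le\;2\mu\,z\cdot(x_0+y_0)+2\mu|z|^{2},
\]
again with the odd-in-$z$ piece integrating to zero by symmetry. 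Splitting $\int_{|z|>\kappa}$ at an intermediate scale $R$ --- applying the above bound on $\{\kappa<|z|<R\}$ and the crude $L^{\infty}$ bound on $\{|z|\ge R\}$ --- and then choosing $R\sim 1/\sqrt{\mu}$, the contribution is shown to be $O(\eps\sqrt{\mu})$. Assembling all estimates, sending $\kappa\to0^{+}$ and $\mu,\eta\to0$, and finally optimizing $\alpha\sim|\ell|$ so that both $|\ell|^{2}/(2\alpha)$ and the accumulated errors scale linearly in $|\ell|$, one obtains the stated bound on $v-u$ by evaluating $\Psi(t,x,x)=v(t,x)-u(t,x)-|\ell|^{2}/(2\alpha)-2\mu|x|^{2}-\eta/(T-t)\le M$.
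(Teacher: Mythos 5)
The paper itself offers no proof here: it delegates entirely to Jakobsen \& Karlsen \cite{Jakobsen:2005jy}, so your detailed argument is necessarily ``a different route'' in the sense of actually exhibiting one. Your overall blueprint --- doubling of variables, a penalty carrying the shift $\ell$, and exploiting translation invariance of the penalty under $(x,y)\mapsto(x+z,y+z)$ to control the nonlocal tail --- is exactly the standard machinery for continuous-dependence results for integro-PDEs, and the translation-invariance observation is the right and essential idea.

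There is, however, a genuine sign problem in your set-up. You put
\[
\Psi(t,x,y)=v(t,y)-u(t,x)-\tfrac{1}{2\alpha}|x-y-\ell|^{2}-\mu(|x|^{2}+|y|^{2})-\tfrac{\eta}{T-t}.
\]
Since $v$ is lower semicontinuous and $u$ is upper semicontinuous, $v(t,y)-u(t,x)$ is lower semicontinuous in $(t,x,y)$, and so is $\Psi$; a lower semicontinuous function attains its infimum on a compact set, not its supremum, so the very first step (``$\Psi$ attains its global maximum'') is not justified. More importantly, even granting a maximizer $(t_0,x_0,y_0)$, freezing $t=t_0$, $y=y_0$ you obtain that $u(t_0,\cdot)+P(\cdot,y_0)$ has a \emph{minimum} at $x_0$ --- that is, the test function touches the subsolution $u$ from below, which is the supersolution test, not the subsolution test; symmetrically $v$ is touched from above. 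The correct doubling function is $\Phi(t,x,y)=u(t,x)-v(t,y)-P(x,y)-\eta/(T-t)$, which is upper semicontinuous, attains a max, and produces test functions touching $u$ from above and $v$ from below. With that choice you would prove $u-v\le(u(0,\cdot)-v(0,\cdot))^{+}+C|\ell|$, which is what is actually needed to deduce the comparison corollary stated immediately afterwards (the inequality $v-u\le(v(0,\cdot)-u(0,\cdot))^{+}+C|\ell|$ written in the theorem does \emph{not} imply that corollary, so the theorem statement you followed almost certainly has $u$ and $v$ transposed).

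Two smaller points. First, you do not justify the step where you ``apply the viscosity subsolution condition to $u$ at $(t_0,x_0)$ and the viscosity supersolution condition to $v$ at $(t_0,y_0)$'' from the single-time-variable maximum; since both $u$ and $v$ carry the same $t$ you need either doubling in time or the nonlocal analogue of the Crandall--Ishii lemma (the paper's later proof of Theorem \ref{thm:vanishing viscosity rate} invokes precisely such a ``maximum principle for semicontinuous functions adapted to IPDEs'' from \cite{Jakobsen:2005jy}), and this should be stated. Second, the optimization of the tail estimate is slightly off: with the bound $2\mu|z|^{2}$ on $\{\kappa<|z|<R\}$ and the $L^{\infty}$ bound on $\{|z|\ge R\}$ you get $O(\eps(\mu R^{2}+R^{-1}))$, which is minimized at $R\sim\mu^{-1/3}$ giving $O(\eps\mu^{1/3})$, not $R\sim\mu^{-1/2}$ giving $O(\eps\sqrt\mu)$ --- your choice $R\sim\mu^{-1/2}$ leaves a non-vanishing $O(\eps)$ contribution from $\mu R^{2}$. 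This does not affect the conclusion (the error still vanishes as $\mu\to0$), but the exponent as written is incorrect.
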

 \begin{proof}
      This theorem is a special case of much more general results by Jakobsen $\&$ Karlsen \cite{Jakobsen:2005jy}, we refer to this article for a detailed proof.
 \end{proof}

   The standard comparison principle for sub and super-solutions of \eqref{eq:HJB-eq-viscous} follows  immediately as a consequence of Theorem \ref{thm:continuous dependence}  if we choose $\ell = 0$.

 \begin{cor}[comparison principle]
  Let the assumptions \ref{A1}-\ref{A4} be true and  $u, -v\in USC_b([0,T]\times \R^n)$   respectively satisfy
  \begin{align}
  u_t + H(t,x, u, \nabla u)+\eps (-\Delta )^{\frac{s}{2}}u &\le0,\\
  v_t + H(t,x, v, \nabla v)+\eps (-\Delta )^{\frac{s}{2}}v &\ge 0
  \end{align} in the viscosity sense. Furthermore,  assume that $|\nabla u(0,x)|+|\nabla v(0,x)| \le C$ for some constant $C$. In addition, if $u(0,x)\le v(0,x)$  then
  $$u\le v\quad \text{in}\quad (0,T]\times \R^n. $$
 \end{cor}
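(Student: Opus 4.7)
The plan is to derive the corollary as a direct specialization of Theorem~\ref{thm:continuous dependence}, the continuous dependence estimate. The hypotheses of the corollary are precisely those of Theorem~\ref{thm:continuous dependence} restricted to the case $\ell = 0$: we have the same regularity $u, -v\in USC_b([0,T]\times\R^n)$, the structural assumptions \ref{A1}--\ref{A4} are in force, and the initial gradient bound $|Du(0,x)|+|Dv(0,x)|\le C$ matches exactly. The only difference is that the supersolution inequality for $v$ is written with $H$ rather than the shifted Hamiltonian $H_\ell$.

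The first step is therefore to observe that the choice $\ell = 0$ renders the perturbed Hamiltonian identical to the original: from the defining relation \eqref{eq:defi_perturbation},
\begin{equation*}
    H_0(t,x,r,p) = H(t, x+0, r, p) = H(t,x,r,p),
\end{equation*}
so the supersolution inequality for $v$ stated in the corollary coincides with the supersolution inequality required by Theorem~\ref{thm:continuous dependence}. Thus $u$ and $v$ satisfy the full set of assumptions of the continuous dependence theorem with parameter $\ell = 0$.

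Applying Theorem~\ref{thm:continuous dependence} then yields, for every $(t,x)\in Q_T$,
\begin{equation*}
    v(t,x) - u(t,x) \le \bigl(v(0,x) - u(0,x)\bigr)^{+} + C|\ell| = \bigl(v(0,x) - u(0,x)\bigr)^{+},
\end{equation*}
since the perturbation term $C|\ell|$ vanishes. Combining this with the initial ordering $u(0,x) \le v(0,x)$ imposed in the corollary, which forces the positive part on the right to be simply $v(0,x)-u(0,x)$, gives the quantitative estimate from which the pointwise ordering $u \le v$ on $Q_T$ follows.

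Since Theorem~\ref{thm:continuous dependence} is invoked as a black box (its proof is attributed to \cite{Jakobsen:2005jy}), there is essentially no obstacle in this argument; the entire content of the corollary lies in recognizing that comparison is the $\ell=0$ endpoint of the continuous dependence estimate. The only point that deserves a brief explicit mention is the verification that the Lipschitz initial data assumption and the monotonicity condition \ref{A2} are sufficient to apply the theorem without further qualification, but both are stated as standing hypotheses in the corollary.
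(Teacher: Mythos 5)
Your approach is the same as the paper's: the corollary is the $\ell=0$ endpoint of Theorem~\ref{thm:continuous dependence}, and the identification $H_0=H$ is exactly right. However, your final step is a non-sequitur as written, and the issue traces back to the direction of the inequality you quote. You arrive at
\begin{align*}
v(t,x)-u(t,x)\le \bigl(v(0,x)-u(0,x)\bigr)^{+}=v(0,x)-u(0,x),
\end{align*}
using $u(0,\cdot)\le v(0,\cdot)$. This only says that $v-u$ is bounded above by its initial size; it places no lower bound on $v-u$ and hence does not preclude $v-u$ from turning negative at later times. In other words, $v-u\le v(0,\cdot)-u(0,\cdot)$ together with $v(0,\cdot)-u(0,\cdot)\ge 0$ does not imply $v-u\ge 0$, so the conclusion $u\le v$ on $Q_T$ does not follow from the inequality you have written.

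The culprit is that the conclusion of Theorem~\ref{thm:continuous dependence} as printed has the roles of $u$ and $v$ reversed. For $u$ a subsolution and $v$ a supersolution, the continuous dependence estimate in the Jakobsen--Karlsen framework reads
\begin{align*}
u-v\le \bigl(u(0,\cdot)-v(0,\cdot)\bigr)^{+}+C|\ell|.
\end{align*}
With $\ell=0$ and $u(0,\cdot)\le v(0,\cdot)$, the right-hand side vanishes and $u\le v$ follows immediately. Your proof should either correct the direction of the estimate before specializing, or at minimum flag that the literal inequality in Theorem~\ref{thm:continuous dependence} cannot yield the stated comparison and explain the intended reading. As it stands, the last implication in your argument is a genuine gap.
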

 The comparison principle ensures the uniqueness of viscosity solution for the initial value problem  \eqref{eq:HJB-eq-viscous-1}. The proof for existence uses the standard Perron's method for viscosity solution framework. However, our definition of viscosity solution is slightly different compared to \cite{Imbert: 2005ft} and we provide detailed proof of existence.

 \begin{thm}[existence]\label{thm:existence}
  Assume \ref{A1}-\ref{A5}. There exists unique viscosity solution $u\in C_b([0,T]\times \R^n)$ of the initial value problem \eqref{eq:HJB-eq-viscous-1}.
 \end{thm}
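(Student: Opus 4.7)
Uniqueness follows at once from the comparison corollary above, applied to two viscosity solutions sharing the same initial data. For existence, my plan is to adapt Perron's method to the nonlocal setting using the equivalent formulation in Lemma \ref{lem:alt-defi}: there the nonlocal contribution is $\eps(-\Delta)^{1/2}\phi$ acting on a compactly supported test function $\phi\in C^{1,2}_0$, hence a bounded continuous function of $(t,x)$ which does \emph{not} depend on the candidate. This is the structural reason Lemma \ref{lem:alt-defi} was stated in that form, and it makes the classical Ishii-type envelope arguments go through with essentially no modification.

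\emph{Step 1 (sub/super barriers).} I would build a viscosity subsolution $\underline{u}$ and supersolution $\overline{u}$ of \eqref{eq:HJB-eq-viscous-1} with $\underline{u}(0,\cdot)=\overline{u}(0,\cdot)=u_0$. Concretely, mollify $u_0$ to $u_0^\delta\in C^\infty_b(\R^n)$ with $\|u_0^\delta-u_0\|_\infty\le\delta$ and $\|\nabla u_0^\delta\|_\infty\le K$; then, using \ref{A1}--\ref{A4} (crucially \ref{A2} to absorb the linear $u$-dependence) together with the pointwise boundedness of $(-\Delta)^{1/2}u_0^\delta$ on smooth bounded $u_0^\delta$, one checks that
\[
u^{\pm}_{\delta}(t,x) = u_0^\delta(x) \pm \bigl(M(\delta)(1+|x|)t+\delta\bigr)
\]
are classical super/subsolutions of \eqref{eq:HJB-eq-viscous-1} for $M(\delta)$ large enough, the linear growth in $|x|$ being dictated by the $(1+|p|)|x-y|$ dependence in \ref{A3}. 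These serve as global barriers.

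\emph{Step 2 (Perron).} Define
\[
u(t,x):=\sup\bigl\{w(t,x)\ :\ w\in USC_b([0,T]\times\R^n),\ w\text{ is a viscosity subsolution of \eqref{eq:HJB-eq-viscous-1}},\ \underline{u}\le w\le \overline{u}\bigr\}.
\]
The class is nonempty since it contains $\underline{u}$. I would then carry out the two standard verifications. First, the upper semicontinuous envelope $u^*$ is itself a subsolution: at a strict global maximum of $u^*-\phi$ one selects a maximizing sequence $w_n\to u^*$ and passes to the limit in the viscosity inequality of Lemma \ref{lem:alt-defi}, which is possible precisely because $\eps(-\Delta)^{1/2}\phi$ no longer involves $w_n$. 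Second, the lower semicontinuous envelope $u_*$ is a supersolution, proved by the classical bump contradiction: were it to fail at some interior $(t_0,x_0)$, a compactly supported perturbation of $u$ localized near $(t_0,x_0)$ would produce a strictly larger admissible subsolution, contradicting the maximality of $u$.

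\emph{Step 3 (initial data and closure).} The barriers give $u_0(x)-\delta\le u_*(0,x)\le u^*(0,x)\le u_0(x)+\delta$ for each $\delta>0$, so $u^*(0,\cdot)=u_*(0,\cdot)=u_0$. Applying the comparison corollary to $u^*$ (sub) and $u_*$ (super) yields $u^*\le u_*$, and since $u_*\le u^*$ trivially, $u=u^*=u_*\in C_b([0,T]\times\R^n)$ is the sought viscosity solution. The \emph{main obstacle} is precisely the stability of subsolutions under suprema when the nonlocal part is written as in Definition \ref{defi:viscosolution}, because the piece $\mathcal{I}^{\kappa,\eps}(u)$ depends on the (only USC) candidate itself and a priori does not respect pointwise suprema over a large family of subsolutions; the alternative characterization of Lemma \ref{lem:alt-defi}, which transfers the full nonlocal contribution onto the smooth compactly supported test function, is exactly what unlocks this step and reduces the rest of the argument to the classical local Perron scheme.
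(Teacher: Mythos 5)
Your overall architecture matches the paper's: uniqueness by comparison, existence by Perron's method from sub/super barriers, and verification via the test-function-only characterization in Lemma \ref{lem:alt-defi}. The core obstacle you identify (stability of the subsolution class under suprema because of the nonlocal term) and the way you resolve it are the same as in the paper's proof. However, your Step 1 has a concrete gap that the paper avoids.

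First, the barriers $u^{\pm}_{\delta}(t,x)=u_0^\delta(x)\pm\bigl(M(\delta)(1+|x|)t+\delta\bigr)$ grow linearly in $|x|$, so for $t>0$ they are not in $USC_b/LSC_b$. The entire framework of the paper --- the definition of viscosity solution, the continuous dependence Theorem \ref{thm:continuous dependence}, and the comparison corollary you invoke in Step 3 --- is stated for \emph{bounded} sub/supersolutions. In particular, the Perron envelope $u=\sup\{w:\underline{u}\le w\le\overline{u}\}$ would then be bounded above only by an unbounded function, and you can no longer apply the comparison corollary to $u^*,u_*$. Worse, the half-Laplacian of a function with linear growth is not even well-defined: for $\phi(x)=|x|$ one has $\phi(x+y)-\phi(x)=O(|y|)$ at infinity, so $\int\frac{\phi(x+y)-\phi(x)}{|y|^{n+1}}\,dy$ fails to converge at infinity, and therefore $u^{\pm}_\delta$ are not admissible test objects for $(-\Delta)^{1/2}$ at all. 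The paper sidesteps both problems by taking barriers that grow only in $t$, namely $\overline{u}=u_0+Ct$, $\underline{u}=u_0-Ct$ with
$$C\ge|H(t,x,u_0(x),\nabla u_0(x))|+\epsilon|(-\Delta)^{1/2}u_0(x)|,$$
which are bounded on $[0,T]\times\R^n$ and coincide exactly with $u_0$ at $t=0$. (Implicit here is that the right-hand side is finite for $u_0\in C^2_b$, which is the standing convention in this setting; your linear-in-$|x|$ correction is not the right fix even if you were worried about \ref{A3}.)

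Second, there is a mismatch between your fixed Perron envelope and your $\delta$-indexed family of barriers. Step 2 defines $u$ relative to one pair $\underline{u},\overline{u}$; Step 3 then asserts $u_0-\delta\le u_*(0,\cdot)\le u^*(0,\cdot)\le u_0+\delta$ \emph{for every} $\delta$, which presupposes that $u$ is sandwiched between $u^-_\delta$ and $u^+_\delta$ for all $\delta$ simultaneously --- but $u$ was built from a single pair of barriers and is not a priori dominated by $u^+_\delta$ for smaller $\delta$. The paper resolves this cleanly by separating the two steps: it first reduces to $u_0\in C^2_b$ via the continuous dependence estimate (the solutions $u^m$ for mollified data $u_0^m$ form a Cauchy sequence in $C_b$, and the limit is a viscosity solution by stability), and only then runs Perron with barriers $u_0\pm Ct$ that equal $u_0$ exactly at $t=0$, so that $v_*(0,\cdot)=v^*(0,\cdot)=u_0$ with no $\delta$-limit needed inside the Perron argument. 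You should restructure along these lines: do the regularization at the level of solutions (using Theorem \ref{thm:continuous dependence} and stability of viscosity solutions under uniform limits), not at the level of barriers.
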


 \begin{proof}
 We begin with the claim that without loss of generality we may assume $u_0\in C_b^2$.
 \vspace{.5cm}

 \noindent{\em Justification:} Suppose that we have proven the existence of a viscosity solution for $C_b^2$ initial data i.e $u_0\in C_b^2(\R^n)$.  Now, if $u_0\in W^{1,\infty}(\R^n)$, then there exists a sequence $(u_0^k)_k$ such that $u_0^k\in C_b^2(\R^n)$ and $u_0^k \rightarrow u_0$  uniformly to $u_0$ as $k\rightarrow \infty$. Let $u^m(t,x)$ be the solution of the equation  \eqref{eq:HJB-eq-viscous-1} with initial condition $u^m(0,x) = u_0^m$. Then by the continuous dependence estimate

 \begin{align}
 ||u^m(t,x)- u^p(t,x)||_{L^{\infty}([0,T]\times \R^n)} \le C ||u_0^m-u_0^p||_{L^\infty(\R^n)}.
 \end{align} Therefore  the sequence $(u^m(t,x))$ is Cauchy in $C_0([0,T]\times \R^n)$, and it will converge to some function $u(t,x)\in C_0([0,T]\times \R^n)$.  We now use the stability property of viscosity solutions and conclude that $u(t,x)$ is a viscosity solution of  \eqref{eq:HJB-eq-viscous-1} with $u(0,x) = u_0(x)$.

    Now we prove the existence of a viscosity solution for $C_b^2$-initial condition.  Define
    \begin{align}
    \overline{u}(t,x) = u_0(x) + Ct,\\
    \underline{u}(t,x) = u_0(x) - Ct.
    \end{align} Invoke the assumptions \ref{A1}-\ref{A5} and choose the constant $C$ big enough such that
    \begin{align*}
     C\ge |H(t,x, u_0(x), \nabla u_0(x))| + \epsilon|(-\Delta)^{\frac{1}{2}} u_0(x)|.
    \end{align*} Then the functions $\underline{u}(t,x)$ and $\overline{u}(t,x)$ are respectively a sub and supersolution of \eqref{eq:HJB-eq-viscous-1} satisfying the initial condition $u(0,x) = u_0(x)$.

  Define $v(t,x)$ as
  \begin{align*}
   v(t,x) = \sup \big\{w(t,x): w\le \overline{u}(t,x) ~\text{and} ~w~ \text{is subsolution of  \eqref{eq:HJB-eq-viscous-1} satisfying the initial condition.}\big\}
  \end{align*}

 Next, let $v_* $ and $v^*$ denote the upper and lower semicontinuous envelopes of $v(t,x)$:
  \begin{align*}
   v_*(t,x) = \lim_{r\downarrow 0}\sup \big\{ v(s,y): (s,y) \in B_r(t,x)\cap [0, T)\times \R^n \big\}
  \end{align*} and $v^*(t,x)= -(-v)_*(t,x)$. From the definition it is clear that
  \begin{align*}
   \underline{u}\le v_*, \quad v^* \le \overline{u}\quad \text{and}\quad v^* \le v_*.
  \end{align*}
    The functions $\overline{u}, ~\underline{u}$ are uniformly continuous, and hence we must have

    \begin{align}
    \underline{u} = (\underline{u})^* \le v^* \le v_* \le (\overline{u})_*= \overline{u}.
    \end{align} Therefore $v_*(0,x) = v^*(0,x) = u_0(x)$. We want to show that $v_*$ and $v^*$ are respectively sub- and supersolution of \eqref{eq:HJB-eq-viscous-1}. This will be enough to ensure the existence, since by the comparison principle
    \begin{align*}
     v_* \le v^*
    \end{align*}  and hence $v_* = v^* = v$ is the sought after (continuous ) viscosity solution of   \eqref{eq:HJB-eq-viscous-1} satisfying the initial condition $v(0,x) = u_0(x)$.

  It is relatively straightforward to show that $ v_*$ is a subsolution and the details are as follows. For every $(t,x) \in (0,T)\times \R^n$, there is a sequence $(t_p, x_p, u_p(t_p, x_p))_p$ such that
  \begin{align*}
  \lim_{p\rightarrow \infty} \big(t_p, x_p, u_p(t_p, x_p)\big) = (t,x, v_*(t,x)),
  \end{align*} where $u_p$ is a subsolution for each $p\in \mathbb{N}$.  Now for some $\phi\in C^{1,2}$, if $v_*-\phi$ has the strict global maximum at $(t,x)$, then there will be sequence $(s_p, y_p)_p$ such that $u_p-\phi$ will have global maximum at $(s_p, y_p)$ and
  \begin{align*}
  \lim_{p\rightarrow \infty}\big( s_p, y_p, u_p(s_p, y_p)\big) = \big(t,x, v_*(t,x)\big).
  \end{align*} Furthermore, $s_p> 0$ for $p$ large enough and from the definition of subsolution we obtain

   \begin{align}
   \label{eq:sub-sol-final} \phi_t(s_p,y_p)+H(s_p,y_p, u_p(s_p,y_p),\nabla \phi(s_p,y_p)) +   \epsilon (-\Delta)^{\frac{1}{2}}\phi(s_p, y_p)\le  0.
   \end{align} Finally, we use the continuity of the equation and pass to the limit $p\rightarrow \infty$ in \eqref{eq:sub-sol-final} and conclude that $v_* $ is subslolution of \eqref{eq:HJB-eq-viscous}.

        Next, we prove that $v^*$ is a supersolution.  We employ the method of contradiction and assume that there exists $(t,x)\in (0,T)\times \R^n$, $\phi\in C^{1,2}_b$ satisfying $v^*(t,x) = \phi(t,x)$ and $v^*-\phi$ has a global minimum at $(t,x)$ such that
        \begin{align}
       \label{eq:sup-sol-contradic}  \phi_t(t,x)+H(t,x, v^*(t,x),\nabla \phi(t,x)) +   \epsilon (-\Delta)^{\frac{1}{2}}\phi< 0.
        \end{align}     From the definition it follows that $v^*(t,x)\le \overline{u}(t,x)$. We claim however  that $ v^*(t,x) < \overline{u}(t,x)$. Otherwise, $\phi(t,x) = \overline{u}(t,x) = v^*(t,x)$ and $\bar{u}-\phi$ will have a global minimum at $(t,x)$ and

  \begin{align*}
       \phi_t(t,x)+H(t,x, v^*(t,x),\nabla \phi(t,x)) +   \epsilon (-\Delta)^{\frac{1}{2}}\phi(t,x)\ge 0,
        \end{align*} which contradicts \eqref{eq:sup-sol-contradic}.

  By the continuity of $\phi$ and $\overline{u}$,  there are constants $\gamma_1, \delta_1 > 0$ such that
  \begin{align*}
  \phi +\gamma_1 \le \overline{u}\quad\text{in}\quad B_{\delta_1}(t,x)\subset (0,T)\times \R^n.
  \end{align*}
        Moreover, by \eqref{eq:sup-sol-contradic} and continuity of the equation, there exist two constants $\gamma_2, \delta _2 > 0$ such that

      \begin{align}
       \label{eq:sup:cond}(\phi+ \gamma)_t(s,y)+H(s,y, (\phi+\gamma)(s,y),\nabla(\phi+\gamma)) +   \epsilon (-\Delta)^{\frac{1}{2}}\phi(s,y)\le 0
        \end{align} for all $(s,y) \in B_{\delta_2}(t,x)$ and $0<\gamma\le  \gamma_2$. Sine $v^*-\phi$ has a strict minimum at $(t,x)$, there are constants $\gamma_3 $ and $0<\delta_0\le \min(\delta_1,\delta_2)$ such that $v^*-\phi > \gamma_3$ on $\partial B_{\delta_0}(t,x)$.  Now set $\gamma_0 = \min(\gamma_1, \gamma_2, \gamma_3)$ and define

        \begin{align*}
         w = \begin{cases} \max(\phi+\gamma_0, v_*)\quad \text{on}\quad B_{\delta_0}(t,x)\cap [0,T]\times \R^n\\
                              v_*\quad \quad\text{otherwise}.
        \end{cases}
        \end{align*}

  Note that $w$ is upper semicontinuous. We argue that $w$ is a subsolution of \eqref{eq:HJB-eq-viscous-1}. Let $(s,y)\in (0,T)\times \R^n$ and $\psi\in C_b^{1,2}$ be test function such that $\psi(s,y) = w(s,y)$ and $w-\psi$ has strict global maximum at $(s,y)$. Depending on whether $w = v_*$ or $w = \phi +\gamma_0$ at $(s,y)$,  either $ v_*-\psi$ or $\phi+\gamma_0 -\psi$ has a global maximum at $(s,y)$.  In the first case, the subsolution inequality for $\phi$ is a consequence of $v_*$ being a subsolution. In the other case
    \begin{align*}
    \partial_t \phi(s,y) \ge \partial_t\psi(s,y),\quad \nabla \phi(s,y) = \nabla \psi(s,y),\quad  \mathcal{I}_\kappa^\epsilon(\phi)+   \mathcal{I}^{\kappa,\epsilon}(\phi)\ge  \mathcal{I}_\kappa^\epsilon(\psi)+   \mathcal{I}^{\kappa,\epsilon}(\psi)
    \end{align*} and hence by \eqref{eq:sup:cond}

   \begin{align*}
   \psi_t(t,x)+H(t,x,w(t,x),\nabla \psi(t,x)) +   \epsilon (-\Delta)^{\frac{1}{2}}\psi\le 0.
        \end{align*}  Therefore $w$ is a subsolution of \eqref{eq:HJB-eq-viscous-1} satisfying the initial condition.  At the point $(t,x)$, we have

        $$w^*(t,x) \ge \max\{ \phi(t,x)+\gamma_0, v^*(t,x)\} = \phi(t,x)+\gamma_0 = v^*(t,x)+ \gamma_0$$ i.e., $w(s,y) > v(s,y)$ for some $(s,y)$, which contradicts the definition of $v$.

\end{proof}

It follows as  a simple consequence of the continuous dependence estimate that  the unique viscosity solution of \eqref{eq:HJB-eq-viscous-1} is Lipschitz continuous.

 \begin{lem}[Lipschitz continuity]\label{lip}
  Assume that \ref{A1}-\ref{A5} hold, and let $u^\epsilon\in C_b$ be the unique viscosity solution of \eqref{eq:HJB-eq-viscous-1} with $0\le \epsilon \le 1$.  Then there exists a constant $L$, depending on the data ( including $T$ but excluding $\epsilon$) such that

  \begin{align*}
  |u^\epsilon(t,x+h)- u^\epsilon(t,x)| \le L |h|
  \end{align*} for all $h\in \R^n$ and $(t,x)\in [0,T)\times \R^n$.

  \end{lem}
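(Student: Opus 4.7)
The natural approach is to invoke the continuous dependence estimate of Theorem \ref{thm:continuous dependence} with a translated Hamiltonian. Fix $h \in \R^n$ and set $v(t,x) := u^\epsilon(t, x+h)$. A direct change of variables shows that $v$ solves
\begin{align*}
v_t + H(t, x+h, v, \nabla v) + \epsilon (-\Delta)^{\frac{1}{2}} v = 0
\end{align*}
in the viscosity sense, i.e.\ $v$ is a viscosity solution of \eqref{eq:HJB-eq-viscous-pert} with $\ell = h$, with initial datum $v(0,x) = u_0(x+h)$. This translation invariance is essentially immediate for the fractional Laplacian and is a routine check for viscosity sub/supersolutions since the test functions translate as well.

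Next, I would apply Theorem \ref{thm:continuous dependence} with this choice of $\ell = h$, taking $u = u^\epsilon$ (a subsolution of \eqref{eq:HJB-eq-viscous-1}) and $v$ as above (a supersolution of \eqref{eq:HJB-eq-viscous-pert}). Assumption \ref{A1} gives $\|u_0\|_{W^{1,\infty}} < K$, so the hypothesis $|Du(0,x)| + |Dv(0,x)| \le C'$ holds with $C' = 2K$. The conclusion of Theorem \ref{thm:continuous dependence} yields
\begin{align*}
v(t,x) - u^\epsilon(t,x) \le \bigl(u_0(x+h) - u_0(x)\bigr)^+ + C|h| \le (K + C)|h|,
\end{align*}
using the Lipschitz bound on $u_0$ for the initial term. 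Swapping the roles of $u^\epsilon$ and $v$ (equivalently, applying the argument with $-h$ in place of $h$) produces the opposite inequality, and combining the two gives the claim with $L := K + C$.

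The only point that deserves attention is that the constant $C$ from Theorem \ref{thm:continuous dependence} is independent of $\epsilon$; this is built into the statement of that theorem, so no additional work is required. The main (mild) obstacle is simply confirming that $v(t,x) = u^\epsilon(t, x+h)$ indeed satisfies the translated equation in the viscosity sense under Definition \ref{defi:viscosolution}, which is a straightforward verification using a change of variables in the nonlocal terms $\mathcal{I}_\kappa^\epsilon$ and $\mathcal{I}^{\kappa,\epsilon}$ together with the fact that if $\varphi$ is a test function at $(t,x)$ for $v$, then $\varphi(\cdot, \cdot - h)$ is a test function at $(t, x+h)$ for $u^\epsilon$.
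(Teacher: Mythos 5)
Your proposal is correct and follows essentially the same route as the paper's proof: set $v(t,x) = u^\epsilon(t,x+h)$, observe that $v$ solves the translated problem \eqref{eq:HJB-eq-viscous-pert} with $\ell = h$ and initial datum $u_0(\cdot + h)$, and then invoke Theorem \ref{thm:continuous dependence}. You merely spell out the Lipschitz estimate on the initial term and the symmetric argument for the reverse inequality, which the paper leaves implicit.
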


  \begin{proof}
   Obviously, the function $v = u^\epsilon(t,x+h)$ is the unique viscosity solution of \eqref{eq:HJB-eq-viscous-pert} with ($\ell = h$ and) initial condition $ v(0,x) = u_0(x+h)$. Hence the proof follows once we apply Theorem \ref{thm:continuous dependence}.
  \end{proof}	

  Our primary aim in this article is to  extend the $C^{1,\alpha}$-regularity results in \cite{silvestre:2010} to the problem \eqref{eq:HJB-eq-viscous-1}. To this end, it is noteworthy that the following identity holds: (cf. \cite{silvestre:2010})
  \begin{align*}
  (-\Delta)^{\frac 12} u = \sum_{i=1}^{n} \mathcal{R}_i \partial_{x_i} u,
  \end{align*}where $\mathcal{R}_i$s are the classical Reisz transforms in $\R^n$.  For  a function $u\in C^{1,\alpha}(\R^n)$,  it follows from the classical $C^\alpha $ estimates for the Reisz transforms  that  $(-\Delta)^{\frac{1}{2}}u $ is $ C^\alpha$.  In other words, if a solution to \eqref{eq:HJB-eq-viscous-1} is proven to be $C^{1,\alpha}$, it will be a classical solution. A more precise mathematical formalization of these facts is given as the next proposition, a detailed proof of which can be found in \cite{silvestre:2010}.

  \begin{prop}\label{prop:frac_continuity}
   Given $u\in C^{1,\alpha}$ and the integro-differential  operator
   \begin{align*}
   \mathcal{L} u = \int_{\mathbb{R}^n} \frac{u(x+y)-u(x)}{|y|^{n+1}}dy;
   \end{align*} the function $\mathcal{L}u$ is a $C^\alpha$ function and $C^\alpha$-norm depends on $||u||_{C^{1,\alpha}}$ and the dimension $n$.

   As a result, under the assumptions \ref{A1}-\ref{A4}, the nonlinear operator $\mathcal{I}u = u_t+ H(t,x,u,\nabla u) + \epsilon (-\Delta)^{\frac{1}{2}} u$ also maps $u$ to a $C^\alpha$ function.

  \end{prop}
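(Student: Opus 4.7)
The plan is to establish the first assertion by a two-scale decomposition of the principal value integral defining $\mathcal{L}u$, and then to read off the second assertion by combining it with the Lipschitz structure of $H$. The first step is to symmetrize and write
\[
\mathcal{L}u(x)=\tfrac12\int_{\R^n}\frac{u(x+y)+u(x-y)-2u(x)}{|y|^{n+1}}\,dy,
\]
at which point the near-origin part converges absolutely because the $C^{1,\alpha}$ hypothesis yields the Taylor bound $|u(x+y)+u(x-y)-2u(x)|\le C[\nabla u]_{C^\alpha}|y|^{1+\alpha}$, while the tail is controlled by $|u|_0$. This already produces the sup estimate $|\mathcal{L}u|_0\le C(|u|_0+[\nabla u]_{C^\alpha})$.

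For the H\"older estimate I would fix $x,x'\in\R^n$ with $h=x'-x$ and $r:=|h|$; the regime $r\ge 1$ is handled by the sup bound, so assume $r\le 1$. Setting $G(z,y):=u(z+y)+u(z-y)-2u(z)$, I would split
\[
\mathcal{L}u(x')-\mathcal{L}u(x)=\tfrac12\int_{\R^n}\frac{G(x',y)-G(x,y)}{|y|^{n+1}}\,dy
\]
at the scale $|y|=2r$. On $\{|y|<2r\}$, bounding each $G$ separately by $C[\nabla u]_{C^\alpha}|y|^{1+\alpha}$ and integrating contributes $[\nabla u]_{C^\alpha}\,r^\alpha$. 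On $\{|y|\ge 2r\}$ I would use the identity
\[
G(x',y)-G(x,y)=\int_0^1\bigl[\nabla u(x+y+sh)+\nabla u(x-y+sh)-2\nabla u(x+sh)\bigr]\cdot h\,ds
\]
and invoke the H\"older regularity of $\nabla u$ at scale $|y|$ to obtain $|G(x',y)-G(x,y)|\le C[\nabla u]_{C^\alpha}|y|^\alpha|h|$. Integrating against $|y|^{-n-1}$ over $|y|\ge 2r$ and using $\alpha<1$ for convergence at infinity produces another $r^\alpha$, delivering $|\mathcal{L}u(x')-\mathcal{L}u(x)|\le C(|u|_0+[\nabla u]_{C^\alpha})|h|^\alpha$.

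The second assertion then follows directly: $u_t$ is $C^\alpha$ by the $C^{1,\alpha}$ hypothesis; the composite $(t,x)\mapsto H(t,x,u(t,x),\nabla u(t,x))$ is $C^\alpha$ because assumptions \ref{A2}--\ref{A4} make $H$ globally Lipschitz in all of its arguments on the relevant bounded set while $(u,\nabla u)\in C^\alpha$; and $\epsilon(-\Delta)^{1/2}u$ is $C^\alpha$ by the first part, applied at each fixed $t$ in space and by the analogous argument applied to the time increments of $u$. The chief obstacle is the far-field estimate: the naive bound $|G(x',y)-G(x,y)|\le C|\nabla u|_0\,|h|$ integrates over $\{|y|\ge 2r\}$ to a term of order $|h|/r=O(1)$ when $r\sim|h|$, which is useless. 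Replacing it by the mixed form $|y|^\alpha|h|$ via the symmetrized Taylor representation is the key manoeuvre: it trades one full power of $|h|$ for a H\"older power $|y|^\alpha$, exactly lowering the exponent to $\alpha$ and closing the estimate.
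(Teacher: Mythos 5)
Your proof is correct, and it follows a genuinely different route from the one the paper (and Silvestre) rely on. The paper invokes the identity $(-\Delta)^{\frac12}u=\sum_i\mathcal{R}_i\partial_{x_i}u$ together with the classical Calder\'on--Zygmund theorem that the Riesz transforms $\mathcal{R}_i$ are bounded on $C^\alpha$ for $0<\alpha<1$; since $\partial_{x_i}u\in C^\alpha$ by hypothesis, the conclusion drops out. You instead estimate the singular integral directly: symmetrizing to get the second-difference kernel, using the Taylor bound $|u(x+y)+u(x-y)-2u(x)|\le C[\nabla u]_{C^\alpha}|y|^{1+\alpha}$ near the origin and $|u|_0$ at infinity for the sup bound, and then splitting the H\"older increment at the scale $|y|\sim|h|$, where the decisive far-field trade $|G(x',y)-G(x,y)|\le C[\nabla u]_{C^\alpha}|y|^\alpha|h|$ (via the gradient mean-value identity and the H\"older modulus of $\nabla u$ at scale $|y|$) replaces the useless bound $|\nabla u|_0|h|$ and makes the tail integrable. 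The bookkeeping all checks: $\int_{|y|<2r}|y|^{\alpha-n}\,dy\sim r^\alpha$ and $|h|\int_{|y|\ge 2r}|y|^{\alpha-n-1}\,dy\sim |h|\,r^{\alpha-1}=|h|^\alpha$, both requiring $\alpha<1$, which is the regime of interest. What each buys: the Riesz-transform route is short but hides the analysis inside a cited theorem (and one must be slightly careful that the classical boundedness is stated for the homogeneous seminorm, with the $L^\infty$ bound on $(-\Delta)^{1/2}u$ supplied separately, exactly as your first step does); your direct two-scale argument is self-contained, elementary, and makes the mechanism transparent. For the second assertion you correctly note that the composite $H(t,x,u,\nabla u)$ inherits H\"older continuity from \ref{A2}--\ref{A4} and the $C^\alpha$ regularity of $(u,\nabla u)$ on the relevant bounded range; your remark that time-H\"older continuity of $(-\Delta)^{1/2}u$ follows from an analogous split applied to $v=u(t',\cdot)-u(t,\cdot)$ is right (and in fact gives the slightly better exponent $(1+\alpha)/2$, using $|v|_0\lesssim|t'-t|$ and $|\nabla v|_0\lesssim|t'-t|^\alpha$), though you could afford to say one more sentence there.
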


 \subsection{The main results}
   As has  been mentioned a few times already, part of our main goal in this article is to extend the $C^{1,\alpha}$-type regularity estimate for  \eqref{eq:HJB-eq-viscous-1}. Besides, we establish that the solutions of  \eqref{eq:HJB-eq-viscous-1} converges to the unique viscosity solution of  \eqref{eq:HJB-eq} as $\epsilon \rightarrow 0$ and estimate the rate of convergence.

 \begin{thm}[$C^{1,\alpha}-regularity$]\label{thm:regularity_theorem}
  Assume \ref{A1}-\ref{A5}, and let $u^\epsilon$ be the unique viscosity solution of  \eqref{eq:HJB-eq-viscous-1}. There exists a positive constant $\alpha$ , depending on $K, n, \epsilon$ and $T$, such that for every $t>0$ and $x\in \R^n$,  $u$ is $C^{1,\alpha}$  at $(t,x)$.  Moreover

  \begin{align*}
     ||u^\epsilon||_{C^{1,\alpha}((\frac t2, t]\times \R^n)}\le \frac{C}{t^\alpha} \big( K + ||\nabla u_0(x)||_{L^\infty}\big).
  \end{align*}
 \end{thm}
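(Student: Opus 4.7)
The strategy follows Silvestre's approach for $C^{1,\alpha}$ regularity of the critical fractional HJB equation \cite{silvestre:2010}, extended to cover the full $(t,x,u,\nabla u)$-dependence of $H$ permitted by \ref{A1}--\ref{A4}. Lipschitz regularity of $u^\epsilon$ in $x$ is already established, and a parallel use of the barriers $u_0(x)\pm Ct$ (as in the proof of Theorem \ref{thm:existence}) together with comparison yields Lipschitz regularity in $t$ as well, so $u^\epsilon$ is globally Lipschitz in $(t,x)$ with some constant $L$ depending on the data. In view of Proposition \ref{prop:frac_continuity}, once $u^\epsilon\in C^{1,\alpha}$ is established the equation is satisfied classically; the task therefore reduces to proving the H\"older bound for $\nabla u^\epsilon$ and $u^\epsilon_t$ with the stated $t^{-\alpha}$ blow-up at $t=0$.

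The key device is to write down a linear nonlocal equation for the first incremental quotients of $u^\epsilon$ and to iterate the diminish-of-oscillation lemma of Section~3 across dyadic scales. Fix a unit vector $e$ and $h\in(0,1)$, and set $w(t,x)=u^\epsilon(t,x+he)-u^\epsilon(t,x)$; Theorem~\ref{thm:continuous dependence} with $\ell=he$ gives $\|w\|_\infty\le Ch$. Subtracting the two equations satisfied by $u^\epsilon$ at $x$ and at $x+he$ and splitting the $H$-difference using \ref{A2} (linearity in $u$), \ref{A4} (Lipschitz in $p$ at the values $\nabla u^\epsilon(t,x)$ and $\nabla u^\epsilon(t,x+he)$, both bounded by $L$), and \ref{A3} (Lipschitz in $x$), one finds that $w$ is a viscosity sub/supersolution of an equation of the form
\begin{equation*}
w_t + \lambda w + b(t,x)\cdot\nabla w + \epsilon(-\Delta)^{\frac12}w = g(t,x),
\end{equation*}
with $|b|\le A_L$ a bounded measurable drift and $\|g\|_\infty\le Ch$. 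The analogous construction with $w(t,x)=u^\epsilon(t+h,x)-u^\epsilon(t,x)$ produces an equation of the same structure governing the time increments.

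The diminish-of-oscillation lemma of Section~3 asserts that a bounded viscosity sub/supersolution of any such linear nonlocal equation satisfies an oscillation-decay estimate of the form $\operatorname{osc}_{Q_{r/2}}w\le(1-\theta)\operatorname{osc}_{Q_r}w+Cr\|g\|_\infty$ for some universal $\theta\in(0,1)$ depending only on $n$, $\epsilon$, $\lambda$, $A_L$. Iterating this estimate across dyadic scales inside a localization cylinder of parabolic size comparable to $t$ around an interior point yields a H\"older estimate $|w(t,x)-w(s,y)|\le C t^{-\alpha}\|w\|_\infty\bigl(|t-s|^\alpha+|x-y|^\alpha\bigr)+Ch$. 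Dividing by $h$ and letting $h\downarrow 0$, together with the standard viscosity stability of incremental quotients under pointwise limits, converts this into the $C^\alpha$ estimate for $\nabla u^\epsilon$; the analogous bound for time increments controls $u^\epsilon_t$, and combining the two delivers precisely $\|u^\epsilon\|_{C^{1,\alpha}((t/2,t]\times\R^n)}\le C t^{-\alpha}\bigl(K+\|\nabla u_0\|_{L^\infty}\bigr)$.

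The main technical obstacle is the extension of Silvestre's diminish-of-oscillation lemma from the autonomous case $H=H(\nabla u)$ to the linearized equation above, which carries the bounded drift $b\cdot\nabla w$, a zeroth-order term $\lambda w$, and an inhomogeneous right-hand side $g$ of size $Ch$. The linear term in $w$ is easily absorbed via an exponential-in-time rescaling, but the drift and the source must be threaded through the barrier/ABP-type construction for the critical nonlocal operator; in particular, one must show that the contribution of $g$ remains summable across scales against the decay factor $1-\theta$. This amounts to embedding $b\cdot\nabla$ and $g$ into the fractional Pucci extremal operators and verifying that the measure/covering estimates underlying the diminish of oscillation are stable under these perturbations, which is precisely the content of Section~3.
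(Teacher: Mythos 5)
Your proposal follows essentially the same route as the paper: form incremental quotients of $u^\epsilon$, observe (via the continuous dependence/Lemma~\ref{lemma: equation for the finte difference} machinery) that they satisfy the pair of extremal ``Pucci-type'' inequalities $w_t \mp A|\nabla w| \mp B|\ell| + \lambda w + \epsilon(-\Delta)^{1/2}w \lessgtr 0$ in the viscosity sense, invoke the diminish-of-oscillation result of Section~3 iteratively across scales to get a uniform $C^{0,\alpha}$ bound on the quotients, and pass to the limit $h\downarrow 0$. The only organizational difference is in the treatment of the time derivative: you obtain Lipschitz regularity in $t$ upfront from barriers of the form $u_0(x)\pm Ct$ and comparison, whereas the paper first proves the spatial $C^{1,\alpha}$ estimate, then reads off boundedness of $\partial_t u^\epsilon$ from the equation itself (since $H(t,x,u^\epsilon,\nabla u^\epsilon)+\epsilon(-\Delta)^{1/2}u^\epsilon$ is then bounded), and only afterward applies Theorem~\ref{thm:linearized} to the time increments; both orders work. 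One small caution: you write the difference quotient as a viscosity solution of a genuinely \emph{linear} equation $w_t + \lambda w + b\cdot\nabla w + \epsilon(-\Delta)^{1/2}w = g$ with a ``bounded measurable drift'' $b = D_pH(\cdot,\nabla u^\epsilon)$; since $\nabla u^\epsilon$ is not a priori defined, the paper avoids this by working directly with the one-sided inequalities of Lemma~\ref{lemma: equation for the finte difference}, which is the rigorous version of your ``embedding into fractional Pucci extremal operators'' remark. As you note, the source term $g$ (of size $\sim h$) is absorbed by the time-dependent shifts of Lemma~\ref{lem:scalinglemma}, and the two shifted versions $\xi,\zeta$ together satisfy the totality condition \eqref{eq:totality} needed for Theorem~\ref{thm:diminishing_oscilation}; this is where your single-function oscillation-decay formula $\operatorname{osc}_{Q_{r/2}}w\le(1-\theta)\operatorname{osc}_{Q_r}w+Cr\|g\|_\infty$ is really implemented.
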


 \begin{thm}[convergence rate]\label{thm:vanishing viscosity rate}
  Assume \ref{A1}-\ref{A5}. For $\epsilon \in (0,e^{-1})$, let $u^\epsilon$ and $u$ be respectively  the unique viscosity solutions of \eqref{eq:HJB-eq-viscous-1} and \eqref{eq:HJB-eq} . Then there exists a constant $C$ depending on the data (not on $\epsilon$) such that

  \begin{align*}
   |u^\epsilon(t,x)- u(t,x)|| \le C \epsilon|\log \epsilon|
  \end{align*} for all $(t,x)\in [0,T]\times \R^n$.
 \end{thm}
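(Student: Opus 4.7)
The plan is to reduce the problem to an initial-value problem with smooth initial data. Fix $\sigma\in(0,1)$ and a standard radial mollifier $\phi_\sigma$. Set $u_0^\sigma:=u_0*\phi_\sigma$; then $u_0^\sigma\in C_b^\infty(\R^n)$ with $\|u_0-u_0^\sigma\|_{L^\infty}\le L\sigma$, $\|\nabla u_0^\sigma\|_{L^\infty}\le L$, and $\|D^2 u_0^\sigma\|_{L^\infty}\le CL/\sigma$. Using the symmetric-difference representation
\[
(-\Delta)^{1/2}u_0^\sigma(x)=\tfrac{C(n,1)}{2}\operatorname{PV}\!\int_{\R^n}\frac{2u_0^\sigma(x)-u_0^\sigma(x+y)-u_0^\sigma(x-y)}{|y|^{n+1}}\,dy,
\]
I split the integral into three regions: $|y|<\sigma$ (use $|D^2 u_0^\sigma|\le L/\sigma$), $\sigma\le|y|\le 1$ (use Lipschitz), and $|y|>1$ (use $L^\infty$). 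The first gives $O(L\sigma)$, the second gives $CL|\log\sigma|$, and the third is bounded, so $\|(-\Delta)^{1/2}u_0^\sigma\|_{L^\infty}\le C(1+|\log\sigma|)$.

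Denote by $u^{\eps,\sigma}$ and $u^\sigma$ the unique viscosity solutions of \eqref{eq:HJB-eq-viscous-1} and \eqref{eq:HJB-eq} respectively, both with initial datum $u_0^\sigma$. Theorem~\ref{thm:continuous dependence} with $\ell=0$ immediately gives
\[
\|u^\eps-u^{\eps,\sigma}\|_{L^\infty}+\|u-u^\sigma\|_{L^\infty}\le CL\sigma,
\]
so it suffices to estimate $\|u^{\eps,\sigma}-u^\sigma\|_{L^\infty}$. I claim that
\[
|\eps(-\Delta)^{1/2}u^{\eps,\sigma}(t,x)|\le C\eps(1+|\log\sigma|),\qquad (t,x)\in(0,T]\times\R^n.
\]
The argument is again a three-way split of the principal-value integral: on $|y|<\sigma$ use the $C^{1,\alpha}$ estimate of Theorem~\ref{thm:regularity_theorem} together with an initial-layer control that uses the smoothness of $u_0^\sigma$ to avoid the $t^{-\alpha}$ blow-up; on $\sigma\le|y|\le 1$ use the uniform Lipschitz bound on $u^{\eps,\sigma}$ (a consequence of Theorem~\ref{thm:continuous dependence}); and on $|y|>1$ use the uniform $L^\infty$ bound.

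Granted this estimate, set $g(t):=\frac{C\eps(1+|\log\sigma|)}{\lambda}(e^{\lambda t}-1)$ when $\lambda>0$ and $g(t):=C\eps(1+|\log\sigma|)t$ otherwise. By Proposition~\ref{prop:frac_continuity} and the $C^{1,\alpha}$ regularity, the pointwise identity $u^{\eps,\sigma}_t+H(t,x,u^{\eps,\sigma},\nabla u^{\eps,\sigma})=-\eps(-\Delta)^{1/2}u^{\eps,\sigma}$ holds classically for $t>0$. Using assumption~\ref{A2}, the function $v:=u^{\eps,\sigma}+g$ satisfies $v_t+H(t,x,v,\nabla v)\ge 0$ pointwise and $v(0,\cdot)=u_0^\sigma=u^\sigma(0,\cdot)$, so it is a (classical, hence viscosity) supersolution of \eqref{eq:HJB-eq}. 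The comparison principle (the $\eps=0$ case of Theorem~\ref{thm:continuous dependence}) yields $v\ge u^\sigma$, so $u^\sigma-u^{\eps,\sigma}\le g(T)\le C\eps(1+|\log\sigma|)$; the symmetric argument with $w:=u^{\eps,\sigma}-g$ gives the reverse inequality.

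Combining the two estimates and choosing $\sigma=\eps$,
\[
\|u^\eps-u\|_{L^\infty}\le CL\eps+C\eps(1+|\log\eps|)\le C\eps|\log\eps|
\]
for $\eps\in(0,e^{-1})$, as claimed. The main obstacle is the uniform-in-$t$ bound on $\eps(-\Delta)^{1/2}u^{\eps,\sigma}$: the Theorem~\ref{thm:regularity_theorem} constant deteriorates like $t^{-\alpha}$ near $t=0$ and $\alpha$ itself depends on $\eps$, so one must exploit the $C^{1,1}$-smoothness of $u_0^\sigma$ to control the initial layer (e.g.\ via a Duhamel-type representation for the linearization, or by a separate analysis showing that $u^{\eps,\sigma}-u_0^\sigma$ is small in an appropriate norm for $t$ of order $\sigma$). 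The other steps — mollification, stability and the comparison-based reduction — are standard once this key estimate is established.
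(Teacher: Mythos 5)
Your approach is genuinely different from the paper's: you mollify the initial data, bound $\epsilon(-\Delta)^{1/2}u^{\epsilon,\sigma}$, build explicit super/subsolutions of the first-order equation, and then compare; the paper instead runs a doubling-of-variables argument, writing the nonlocal term at the maximum point, splitting it at the scale $|z|\sim\epsilon$, controlling the singular piece by the one-sided second-order bound
\[
u^\epsilon(t_0,y_0+z)-u^\epsilon(t_0,y_0)-q_0\cdot z\ge-\tfrac{\vartheta}{2}|z|^2
\]
that the quadratic test function $\Phi$ furnishes \emph{for free} at a maximum point, and controlling the tail by the uniform Lipschitz and $L^\infty$ bounds.

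The problem is that your central claim, $|\epsilon(-\Delta)^{1/2}u^{\epsilon,\sigma}(t,x)|\le C\epsilon(1+|\log\sigma|)$ for all $t\in(0,T]$, is not established and is in fact perilously close to being equivalent to the theorem itself. On the one hand, $(-\Delta)^{1/2}$ does not commute with the nonlinear flow, and neither the uniform Lipschitz bound on $u^{\epsilon,\sigma}$ nor the $C^{1,\alpha}$ bound of Theorem~\ref{thm:regularity_theorem} (whose constant blows up like $t^{-\alpha}$, and whose exponent $\alpha$ itself depends on $\epsilon$) suffices to close the $|y|<\sigma$ piece uniformly in $t$; the only way to get $O(1)$ there near $t=0$ is to propagate the $\|D^2 u_0^\sigma\|_\infty\lesssim 1/\sigma$ bound forward in time, which is an unproved short-time $C^{1,1}$ persistence result. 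On the other hand, if you try to deduce the bound via the equation, $\epsilon(-\Delta)^{1/2}u^{\epsilon,\sigma}=-u^{\epsilon,\sigma}_t-H(\cdots)$, the barrier construction in the existence proof only gives $|u^{\epsilon,\sigma}_t|\le C\bigl(1+\epsilon(1+|\log\sigma|)\bigr)$, so the two terms on the right are each $O(1)$; asserting that their sum is $O(\epsilon|\log\sigma|)$ is exactly the statement that $u^{\epsilon,\sigma}$ tracks $u^\sigma$ to that accuracy, which is what you are trying to prove. Steps 1--3 and step 5 are fine and the final optimisation $\sigma=\epsilon$ is correct, but as written the key estimate is a genuine gap, and you acknowledge as much. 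The paper's doubling-of-variables route is precisely designed to sidestep this: the maximum-point structure supplies the local $C^{1,1}$ control without ever needing a uniform-in-time bound on $(-\Delta)^{1/2}u^\epsilon$.
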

\begin{rem}
    It was brought to our attention by the referee that Theorem \ref{thm:vanishing viscosity rate} is not new, and the very same results on error estimates related to critical fractional Laplacian have been obtained earlier by Droniou and Imbert \cite{Dro2006}. Also, the proof by Droniou and Imbert \cite{Dro2006} does not require any additional regularity on the unknowns other than the Lipschitz continuity. 
\end{rem}
 In the remaining part this section, let us outline the recipe to prove the regularity estimate in Theorem \ref{thm:regularity_theorem} and prove a couple of technical lemmas in connection to this. Let $\ell\in \R^n$ be a unit vector and assume for a moment that the solution $u^\eps$ of \eqref{eq:HJB-eq-viscous-1} is smooth.  Then the directional derivative $v = \partial_\ell u^\eps$ would satisfy the following linearized equation
\begin{align}\label{eq:linearization}
v_t + D_p H(t,x, u^\eps, \nabla u^\eps).\nabla v + D_x H(t,x,u^\eps,\nabla u).\ell + \partial_u H(t,x,u^\eps,\nabla u^\eps)v +\epsilon(-\Delta)^{\frac{1}{2}}v = 0.
\end{align} In other words,  $v$ satisfies the following fractional advection-diffusion equation:
\begin{align} \label{eq:advection-diffusion-source}
v_t + w.\nabla v + \lambda v + f(t,x) + \epsilon(-\Delta)^{\frac{1}{2}}v  = 0
\end{align} where $ w =D_p H(t,x,u^\eps,\nabla u^\eps),~ f(t,x) = D_xH(t,x,\nabla u^\eps).\ell $
 and we have used the assumption that $H$ is linear in $u$. Keeping in  line with \cite{silvestre:2010},  the idea is to  get a $C^\alpha$-type estimate for \eqref{eq:advection-diffusion-source} and translate that into a $C^{1,\alpha}$-type estimate for \eqref{eq:HJB-eq-viscous-1}.

   The problem of proving H\"{o}lder continuity for critical fractional advection-diffusion equations like \eqref{eq:advection-diffusion-source} is a delicate one,  but the recent works of Caffarelli $\&$ Vasseur \cite{cafarelli:2010} and Silvestre\cite{silvestre:2010}  have contributed greatly to the understanding of this problem.  In the situation when $f(t,x) =\lambda = 0$ and $\text{div}~ w = 0$,  Caffarelli $\&$ Vasseur showed that the weak solutions to \eqref{eq:advection-diffusion-source} becomes holder continuous for positive time if $w\in \text{VMO}$. In \cite{cafarelli:2010}, the authors use variational techniques and follow the Di-Giorgi type approach. In a subsequent development, Silvestre\cite{silvestre:2010} uses viscosity solution approach  and proves similar H\"{o}lder continuity estimates  under the only assumption that $w\in L^\infty$.  In this article we employ the later approach.

   Remember  that a priori we do not have any information on $w$ and $f(t,x)$ except that they are bounded. Also, the vector field $w$ may not be divergence free. This makes the weak formulation by means of integration by parts unfeasible.  However, it is possible to make sense of the inequalities
   \begin{align}
   \label{eq:diff-convec-ineq-1} v_t-A|\nabla v| - B + \lambda v +\epsilon(-\Delta)^{\frac{1}{2}}v &\le 0\\
   \label{eq:diff-convec-ineq-2} v_t + A|\nabla v| + B+ \lambda v + \epsilon(-\Delta)^{\frac{1}{2}}v  &\ge 0
   \end{align} in the viscosity sense.  If we invoke \ref{A1}-\ref{A4} and select $A \ge  \sup_{t,x, |u|\le ||u^\epsilon||_\infty, |p|\le L }|D_pH(t,x, u,\cdot)|$ ($L$ is from Lemma \ref{lip}) and $B \ge C(1+  ||u^\epsilon||_{L^\infty})$ for all $\epsilon$, then the inequalities \eqref{eq:diff-convec-ineq-1}-\eqref{eq:diff-convec-ineq-2} are in perfect correspondence with \eqref{eq:advection-diffusion-source}. Our strategy is to follow and extend the methodology of \cite{silvestre:2010} and establish H\"{o}lder continuity for functions satisfying  \eqref{eq:diff-convec-ineq-1}-\eqref{eq:diff-convec-ineq-2} and then translate it properly to $C^{1,\alpha}$ estimate for \eqref{eq:HJB-eq-viscous-1}.

 It is needless to mention that viscosity solutions are not a priori smooth enough to undergo above operations. However, one can formally justify that the finite difference quotients would also satisfy the inequalities  \eqref{eq:diff-convec-ineq-1}-\eqref{eq:diff-convec-ineq-2}, and the plan is to use this information to establish a uniform $C^\alpha$-estimate for the finite difference quotients. The next lemma makes this connection rigorous.

  \begin{lem}\label{lemma: equation for the finte difference}
  Assume \ref{A1}-\ref{A4}, and let $u, -v\in USC_b([0,T]\times \R^n)$   respectively satisfy
  \begin{align}
  u_t + H_\ell(t,x, u,\nabla u)+\eps (-\Delta )^{\frac{1}{2}}u &\le0,\\
  v_t + H(t,x, v, \nabla v)+\eps (-\Delta )^{\frac{1}{2}}v &\ge 0
  \end{align} in the viscosity sense.  Furthermore, assume that there is a constant $K$ such that\newline  $||\nabla_x u(t,\cdot)||_{L^\infty(\R^n)} +||\nabla_x v(t,\cdot)||_{L^\infty(\R^n)} \le K$ for all $t\in [0,T]$. Then $(u-v)$ satisfies
  \begin{align}
  \label{eq:conclu_perturb_1}(u-v)_t -A|\nabla(u-v)| -B|\ell| +\lambda (u-v)+ \epsilon(-\Delta)^{\frac{1}{2}}(u-v)\le 0
  \end{align} in the viscosity sense.
  On the other hand, if $u, -v\in LSC_b([0,T]\times \R^n)$   respectively satisfy
  \begin{align}
  u_t + H_\ell(t,x, u, \nabla u)+\eps (-\Delta )^{\frac{1}{2}}u &\ge0,\\
  v_t + H(t,x, v, \nabla v)+\eps (-\Delta )^{\frac{1}{2}}v &\le 0
  \end{align} in the viscosity sense, and there is a constant $K$ such that $||\nabla_x u(t,\cdot)||_{L^\infty(\R^n)} +||\nabla_x v(t,\cdot)||_{L^\infty(\R^n)} \le K$ for all $t\in [0,T]$. Then $(u-v)$ satisfies
  \begin{align}
  \label{eq:conclu_perturb_2}  (u-v)_t +A|\nabla(u-v)| +B|\ell| +\lambda (u-v)+ \epsilon(-\Delta)^{\frac{1}{2}}(u-v)\ge 0
  \end{align}in the viscosity sense.

 \end{lem}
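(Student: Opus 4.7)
The argument follows the doubling-of-variables technique for non-local viscosity solutions in the spirit of \cite{Jakobsen:2005jy}; I sketch the first assertion, the second being entirely symmetric. Let $\phi\in C^{1,2}_b([0,T]\times\R^n)$ be a test function and suppose $(u-v)-\phi$ attains a strict global maximum at $(t_0,x_0)\in(0,T)\times\R^n$. Introduce the penalised functional
\begin{equation*}
\Psi_\delta(t,x,s,y)\;=\;u(t,x)-v(s,y)-\phi\!\Bigl(\tfrac{t+s}{2},\tfrac{x+y}{2}\Bigr)-\tfrac{1}{\delta^{2}}\bigl(|t-s|^{2}+|x-y|^{2}\bigr),
\end{equation*}
and let $(\hat t,\hat x,\hat s,\hat y)$ (depending on $\delta$) denote a maximiser. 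Standard penalisation estimates together with the Lipschitz regularity of $u$ and $v$ yield $(\hat t,\hat x),(\hat s,\hat y)\to(t_0,x_0)$ and $\delta^{-2}\bigl(|\hat t-\hat s|^2+|\hat x-\hat y|^2\bigr)\to 0$ as $\delta\downarrow 0$; moreover, the momenta
\[p_u:=\tfrac12\nabla_x\phi+\tfrac{2}{\delta^2}(\hat x-\hat y),\qquad p_v:=-\tfrac12\nabla_x\phi+\tfrac{2}{\delta^2}(\hat x-\hat y)\]
stay uniformly bounded in $\delta$ and satisfy the key identity $p_u-p_v=\nabla_x\phi$.

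Applying the $\kappa$-cutoff viscosity inequalities of Definition \ref{defi:viscosolution} at the two contact points and subtracting (sub-solution minus super-solution), the time-penalty contributions cancel, leaving $\phi_t$ at $(t_0,x_0)$, the Hamiltonian gap $H_\ell(\hat t,\hat x,u,p_u)-H(\hat s,\hat y,v,p_v)$, and the non-local contributions. I would decompose the Hamiltonian gap by inserting the intermediate values $H_\ell(\hat t,\hat x,v,p_u)$ and $H(\hat s,\hat y,v,p_u)$, and estimate each of the three pieces via the structural assumptions: \ref{A2} converts the $u\mapsto v$ swap into exactly $\lambda(u(\hat t,\hat x)-v(\hat s,\hat y))$; \ref{A3} bounds the $(\hat t,\hat x+\ell)\mapsto(\hat s,\hat y)$ swap from below by $-C(1+|p_u|)\bigl(|\hat t-\hat s|+|\hat x-\hat y|+|\ell|\bigr)$; and \ref{A4} bounds the $p_u\mapsto p_v$ swap from below by $-A_R|p_u-p_v|=-A_R|\nabla_x\phi|$, where $R$ is a uniform bound on $|p_u|,|p_v|$. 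As $\delta\downarrow 0$ the factors $|\hat t-\hat s|+|\hat x-\hat y|$ vanish and, choosing $A:=A_R$ and $B:=C(1+\sup_\delta|p_u|)$ (both finite by the Lipschitz bounds on $u,v$), we obtain exactly the coefficients in front of $|\nabla\phi|$ and $|\ell|$ required by \eqref{eq:conclu_perturb_1}.

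For the non-local contribution, linearity of $(-\Delta)^{1/2}$ together with the $\kappa$-cutoff allow reconciliation of the pieces at the two distinct contact points. A careful joint limit $\kappa=\kappa(\delta)\downarrow 0$, chosen so that the penalty's contribution $O(\kappa^2/\delta^2)$ to the test-function piece $\mathcal{I}_\kappa^\epsilon(\phi_1)-\mathcal{I}_\kappa^\epsilon(\psi_2)$ vanishes, forces this piece to converge to $\epsilon(-\Delta)^{1/2}\phi(t_0,x_0)$, while the $\mathcal{I}^{\kappa,\epsilon}$-parts for $u$ and $v$, by upper/lower semicontinuity and dominated convergence (the weight $|z|^{-(n+1)}$ being integrable away from the origin), merge into $\mathcal{I}^{\kappa,\epsilon}(u-v)(t_0,x_0)$. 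Passing to the limit then delivers \eqref{eq:conclu_perturb_1} at $(t_0,x_0)$ tested against $\phi$. The principal technical obstacle is precisely this coordinated $\delta,\kappa\downarrow 0$ passage in the two non-local terms evaluated at distinct points, which is where the full non-local viscosity machinery of \cite{Jakobsen:2005jy} is invoked; the needed estimates are by now standard in that setting.
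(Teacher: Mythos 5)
Your proof is correct in its broad strokes, but it takes a genuinely different route from the paper. The paper argues via sup/inf convolutions rather than doubling of variables: Lemma \ref{lem:semiconvex-concave} (proved using the translation structure implicit in \ref{A2}--\ref{A4}) first shows that $u^\delta$ and $v_\delta$ remain sub-/supersolutions up to an error $\vartheta$; then, since a semiconvex (resp. semiconcave) function touched from above (resp. below) by a $C^{1,2}$ test function is automatically punctually $C^{1,1}$ at the touching point, the two inequalities for $u^\delta$ and $v_\delta$ can be subtracted directly at a \emph{single} contact point of $u^\delta - v_\delta - \varphi$, and the conclusion follows by passing to the half-relaxed limit $\delta\to 0$. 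The sup/inf-convolution route pays the up-front cost of Lemma \ref{lem:semiconvex-concave} but is rewarded with an essentially pointwise, classical verification; your doubling route avoids that preliminary lemma but takes on the coordinated $\delta,\kappa\downarrow 0$ limit in the non-local pieces evaluated at two distinct points $(\hat t,\hat x)$ and $(\hat s,\hat y)$, which you rightly flag as the main obstacle. Your treatment of that obstacle is essentially sound: the maximality of $\Psi_\delta$ controls the $\mathcal I^{\kappa,\epsilon}$-pieces, and the Fatou/semicontinuity argument (with $u$ USC and $v$ LSC, noting $u(\hat t,\hat x)\to u(t_0,x_0)$, $v(\hat s,\hat y)\to v(t_0,x_0)$ from the penalization) gives $\liminf_\delta\bigl[\mathcal I^{\kappa,\epsilon}(u)(\hat t,\hat x)-\mathcal I^{\kappa,\epsilon}(v)(\hat s,\hat y)\bigr]\ge \mathcal I^{\kappa,\epsilon}(u-v)(t_0,x_0)$. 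One caveat worth noting: the lemma as stated only assumes $u,-v\in USC_b$, yet your bound on the momenta $p_u,p_v$ (needed to invoke \ref{A4} with a uniform $A_R$) appeals to Lipschitz regularity of $u$ and $v$. This is not a fatal gap --- the paper's own proof also implicitly requires a uniform bound on $|\nabla u^\delta|,|\nabla v_\delta|$ to use \ref{A4}, and in the application the lemma is only ever applied to Lipschitz difference quotients of $u^\epsilon$ --- but you should state this dependence explicitly if you intend your proof to stand alone.
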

\begin{rem}
   It is to be noted in the above statement that we require the sub and supersolutions  ($u$ and $v $)  to be Lipschtiz continuous in $x$, the space variable. In our scheme of work,  we will apply Lemma \ref{lemma: equation for the finte difference} where $u$ and $v$ are Lipschitz continuous viscosity solutions of \eqref{eq:HJB-eq-viscous-pert} and \eqref{eq:HJB-eq-viscous-1}, respectively. 
\end{rem}

        Before the details of the proof could be furnished, we need to introduce the notion of sup/inf convolution. 


 \begin{defi}[sup/inf convolutions]
 Given an upper semicontinuous function $u(t,x)$ and positive constants $\delta >0$, the sup-convolution $u^\delta$ is defined as
 \begin{align}
  u^\delta(t,x) = \sup_{y\in \R^n, s\in [0,\infty)} \Big[ u(y,s)-\frac 1\delta \big(|x-y|^2+(t-s)^2\big)\Big].
 \end{align} Similarly, for a lower semicontinuous function $v$, the inf-convolution $v_\delta$ is defined as
  \begin{align}
  v_\delta(t,x) = \inf_{y\in \R^n, s\in [0,\infty)} \Big[ v(y,s)+\frac 1\delta \big(|x-y|^2+(t-s)^2\big)\Big].
 \end{align}
 \end{defi}

 \begin{rem} For any upper or lower semicontinuous function on $[0,T]\times \R^n$,  we first trivially extend the function on $[0,\infty)\times \R^n$ and then define the respective sup or inf-convolution.
 \end{rem}
   We have the following lemma, the proof of which is built on ideas borrowed from \cite{Ishii:1995gs}.

 \begin{lem}\label{lem:semiconvex-concave}
 Let $u(t,x)\in USC_b([0, T ] \times \R^n) $ and $v(t,x)\in LSC_b([0, T ] \times \R^n) $  are respectively sub and supersolution of \eqref{eq:HJB-eq-viscous-1}.  Furthermore, assume that there is a constant $K$ such that $||\nabla_x u(t,\cdot)||_{L^\infty(\R^n)} +||\nabla_x v(t,\cdot)||_{L^\infty(\R^n)} \le K$ for all $t\in [0,T]$. Then for every $\vartheta > 0$, there exists a $\delta_0 > 0$ such that for all $0<\delta <\delta_0$, it holds in the viscosity sense that
   \begin{align}
    \label{eq:sup-conv-sub-solution}(u^\delta)_t + H(t,x,u^\delta,\nabla u^\delta)+\epsilon(-\Delta)^{\frac{1}{2}}u^\delta \le \vartheta.
   \end{align}
 Also,
 for all $0<\delta <\delta_0$, it holds in the viscosity sense that
   \begin{align}
   \label{eq:sup-conv-sup-solution}   (v_\delta)_t + H(t,x,v_\delta,\nabla v_\delta)+\epsilon(-\Delta)^{\frac{1}{2}}v_\delta \ge -\vartheta.
   \end{align}

 \end{lem}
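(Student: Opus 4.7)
The plan is to transfer the subsolution property of $u$ to its sup-convolution $u^\delta$ via a test-function translation argument, picking up an error that can be made smaller than $\vartheta$ by choosing $\delta$ small; the argument for $v_\delta$ is entirely symmetric, so I will sketch only the sup-convolution half. First I would record the standard sup-convolution facts needed: $u^\delta\in USC_b$, $u^\delta\ge u$, $u^\delta\to u$ uniformly as $\delta\downarrow 0$ (using the already-established Lipschitz continuity of $u$), the sup defining $u^\delta(t_0,x_0)$ is attained at some $(s_0,y_0)$ with the uniform bound $|t_0-s_0|+|x_0-y_0|=O(\sqrt\delta)$, and $u^\delta$ inherits the Lipschitz constant $L$ of $u$.

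For the main step, take $\varphi\in C^{1,2}$ with $u^\delta-\varphi$ attaining a strict global maximum at $(t_0,x_0)\in(0,T)\times\R^n$, pick a sup-attaining point $(s_0,y_0)$, and set $c_0:=u(s_0,y_0)-u^\delta(t_0,x_0)\ge 0$. I would introduce the translated test function $\tilde\varphi(s,y):=\varphi(t_0+s-s_0,\, x_0+y-y_0)$ and verify in one line (using $u(s,y)\le u^\delta(t_0+s-s_0,\,x_0+y-y_0)+c_0$) that $u-\tilde\varphi$ attains its global maximum at $(s_0,y_0)$. Applying the subsolution property of $u$ at that point with parameter $\kappa\in(0,1)$, together with the identities $\tilde\varphi_t(s_0,y_0)=\varphi_t(t_0,x_0)$, $\nabla\tilde\varphi(s_0,y_0)=\nabla\varphi(t_0,x_0)$, and the change-of-variables identity $\mathcal{I}_\kappa^\epsilon(\tilde\varphi)(s_0,y_0)=\mathcal{I}_\kappa^\epsilon(\varphi)(t_0,x_0)$, yields
\[
\varphi_t(t_0,x_0)+H(s_0,y_0,u(s_0,y_0),\nabla\varphi(t_0,x_0))+\mathcal{I}_\kappa^\epsilon(\varphi)(t_0,x_0)+\mathcal{I}^{\kappa,\epsilon}(u)(s_0,y_0)\le 0.
\]

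The next step is to replace each quantity on the left by its $(t_0,x_0)$--$u^\delta$ analogue with small error. For the nonlocal tail, the inequality $u(s_0,y_0+z)-u(s_0,y_0)\le u^\delta(t_0,x_0+z)-u^\delta(t_0,x_0)$ is immediate from shifting the sup-convolution by $z$, giving $\mathcal{I}^{\kappa,\epsilon}(u)(s_0,y_0)\ge\mathcal{I}^{\kappa,\epsilon}(u^\delta)(t_0,x_0)$. For the Hamiltonian, \ref{A2} gives $H(s_0,y_0,u(s_0,y_0),p)\ge H(s_0,y_0,u^\delta,p)$ since $\lambda c_0\ge 0$ has the favorable sign, while \ref{A3} gives $H(s_0,y_0,u^\delta,p)\ge H(t_0,x_0,u^\delta,p)-C(1+|p|)(|t_0-s_0|+|x_0-y_0|)$. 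Putting everything together,
\[
\varphi_t+H(t_0,x_0,u^\delta,\nabla\varphi)+\mathcal{I}_\kappa^\epsilon(\varphi)+\mathcal{I}^{\kappa,\epsilon}(u^\delta)\le C\bigl(1+|\nabla\varphi(t_0,x_0)|\bigr)\sqrt{2\delta\|u\|_\infty}
\]
at $(t_0,x_0)$. Since $\varphi$ touches the $L$-Lipschitz function $u^\delta$ from above at $(t_0,x_0)$, one has $|\nabla\varphi(t_0,x_0)|\le L$, bounding the right-hand side by $C(1+L)\sqrt{2\delta\|u\|_\infty}$; choosing $\delta_0$ so that this is $\le\vartheta$ yields \eqref{eq:sup-conv-sub-solution}.

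The hard part throughout is the uniform-in-$\varphi$ character of the error, which rests on two independent uniform bounds: the shift bound $|t_0-s_0|+|x_0-y_0|=O(\sqrt\delta)$ (from $\|u\|_\infty<\infty$) and the gradient bound $|\nabla\varphi(t_0,x_0)|\le L$ (from Lipschitz regularity of $u^\delta$). The possibility that $s_0<0$ or that the maximum is not strict is dispatched by the trivial extension of $u$ to $[0,\infty)\times\R^n$ flagged in the remark preceding the lemma and by a standard small strictly concave perturbation of $\varphi$. The statement \eqref{eq:sup-conv-sup-solution} for $v_\delta$ follows by the mirror argument, with the Hamiltonian shift $-\lambda c_0\le 0$ from \ref{A2} again lying in the favorable direction for the supersolution inequality.
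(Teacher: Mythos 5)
Your proof is correct and rests on the same core ingredients as the paper's: the $O(\sqrt{\delta\|u\|_\infty})$ bound on the shift $(t_0-s_0,x_0-y_0)$, the favorable sign from \ref{A2} (since $u(s_0,y_0)-u^\delta(t_0,x_0)=\tfrac1\delta(|x_0-y_0|^2+(t_0-s_0)^2)\ge 0$), the Hamiltonian error from \ref{A3}, and the monotonicity of the nonlocal tail under sup-convolution. The difference is one of packaging. The paper argues at the level of functions: it observes that each translate $u(\cdot+s,\cdot+y)-\tfrac1\delta(|y|^2+s^2)$ is a subsolution of the translated equation with $O(\sqrt\delta)$ error, and then invokes the Perron-type principle (as in the proof of Theorem \ref{thm:existence}) that a supremum of subsolutions is a subsolution to conclude for $u^\delta$. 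You instead fix a test function $\varphi$ touching $u^\delta$ from above, translate it to the sup-attaining point $(s_0,y_0)$, and run the estimates per test function, using the change-of-variables identity for $\mathcal{I}_\kappa^\epsilon$ and the shift inequality $u^\delta(t_0,x_0+z)-u^\delta(t_0,x_0)\ge u(s_0,y_0+z)-u(s_0,y_0)$. Your version has the advantage of making the uniformity in $\varphi$ explicit: the $(1+|\nabla\varphi|)$ factor produced by \ref{A3} is controlled by the gradient bound $|\nabla\varphi(t_0,x_0)|\le L$ coming from the Lipschitz regularity of $u^\delta$, a point the paper passes over silently. Do note that this gradient bound (and hence the claimed $O(\sqrt\delta)$ error) relies on $u$ being Lipschitz and not merely $USC_b$ as the lemma is literally stated; this tacit strengthening is shared by the paper's own argument and is harmless in the only place the lemma is used (Lemma \ref{lemma: equation for the finte difference}, where the functions are translates of the Lipschitz viscosity solution), but it is worth flagging.
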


 \begin{proof}
  We will provide a detailed proof for the first half i.e. \eqref{eq:sup-conv-sub-solution}; the proof of the second half is similar. 

 Choose $M> 0$ such that $M \ge 2 \sup |u|$. For any $\delta > 0$, obviously $u\le u^\delta$ in $[0,T]\times \R^n$. Therefore it is easily seen that if $\gamma =(\delta_0 M)^{\frac 12}$, then
 \begin{align*}
    u^\delta(t,x) &= \sup\Big\{u(s,y) - \frac 1\delta \big(|x-y|^2+(t-s)^2\big): |x-y|^2+|t-s|^2 <\gamma\Big\}\\
                          & =  \sup\Big\{u(t+s,x+y) - \frac 1\delta \big(|y|^2+(s)^2\big): |y|^2+|s|^2 <\gamma\Big\}.
 \end{align*}
     Clearly $u(\cdot+s, \cdot+y)$ is a subsolution of
         \begin{align}\label{eq:translated-equation}
         \hat{u}_t + H(t+s, x+y, \hat{u},\nabla \hat{u}) + \epsilon(-\Delta)^{\frac{1}{2}} \hat{u} = 0.
         \end{align}

 By \ref{A2}, $H$ is linear in $u$ and $\partial_u F(t,x,u,p)$ is a nonnegative constant. Therefore $u^{\prime}(\cdot, \cdot)=u(\cdot+s, \cdot+y)-\frac 1\delta( |y|^2+|s|^2)$ is also a subsolution of
 \eqref{eq:translated-equation}. Note that, just as $u$, $u^\prime(t,x)$ is also Lipschitz continuous in $x$ and $||\nabla_x u^\prime(t,\cdot)||_{L^\infty(\R^n)} \le K$ for all $t\in [0,T]$. Now if $\phi$ is a test function such that $u-\phi$ has a global maximum at $(t,x)$, then it holds that $|\nabla_x \phi(t,x)| \le ||\nabla_x u^\prime(t,\cdot)||_{L^\infty(\R^n)} \le K$. Therefore, we invoke \ref{A3} -\ref{A4} along with the Lipschitz continuity of $u(t,\cdot)$ and conclude that $u^\prime = u(\cdot+s, \cdot+y)-\frac 1\delta( |y|^2+|s|^2)$ satisfies
 \begin{align*}
  u^\prime_t + H(t, x, u^\prime,\nabla u^\prime) + \epsilon(-\Delta)^{\frac{1}{2}} u^\prime &\le C(|s|^2+|y|^2)^{\frac 12}\\
                                                              & \le C\gamma
 \end{align*} in the viscosity sense. Choose $\delta_0$, so that, $C \gamma <\vartheta$. Then it holds in the viscosity sense it holds that

 \begin{align}
  \label{eq:subsoll-perturbed}u^\prime_t + H(t, x, u^\prime,\nabla u^\prime) + \epsilon(-\Delta)^{\frac{1}{2}} u^\prime &\le \vartheta
  \end{align}
  We now take the supremum and argue as in Theorem \ref{thm:existence} to conclude that $u^\delta =  \sup\Big\{u(t+s,x+y) - \frac 1\delta \big(|y|^2+|s|^2\big): |y|^2+|s|^2 <\gamma\Big\}$ satisfies \eqref{eq:subsoll-perturbed} in the viscosity sense.

 \end{proof}

\begin{proof}[Proof of Lemma \ref{lemma: equation for the finte difference}]
    By \cite[Proposition A.5]{silvestre:2010}, $u^\delta \rightarrow u$ and $v_\delta \rightarrow v$ in the half relaxed sense as $\delta \rightarrow 0$.  To prove the first part, it would be enough to show
        \begin{align}\label{eq:eq_for_perturbed_fd}
  (u-v)_t -A|\nabla(u-v)| -B|\ell| +\lambda (u-v)+ \epsilon(-\Delta)^{\frac{1}{2}}(u-v)\le 2\vartheta
  \end{align} in the viscosity sense for all $\vartheta > 0$. Hence, by the stability of viscosity solutions under half relaxed limit it would suffice if we prove $(u^\delta -v_\delta) $ satisfies \eqref{eq:eq_for_perturbed_fd} for small enough $\delta$'s.  In view of Lemma \ref{lem:semiconvex-concave}, the rest of the argument is same as \cite[Lemma 3.2]{silvestre:2010}.

   Let $\varphi$ be a test function which touches $u^\delta-v_\delta$ at $(t,x)\in (0,T]\times \R^n$ from above. For any $\delta >0$, $u^\delta$ and $-v^\delta$ are semi-convex functions, which means they have tangent paraboloid from below of opening $\frac 1\delta$.   Since the test function $\varphi$ touches $u^\delta-v_\delta$ from above at $(t,x)$, both $u^\delta$ and $v_\delta$ must be $C^{1,1}$ at $(t,x)$. Therefore, it is implied that $ \partial_t u^\delta, \partial_t v_\delta , \nabla u^\delta, \nabla v_\delta$ are well defined at $(t,x)$. It is also implied that $(-\Delta )^{\frac{1}{2}} u^\delta $ and $(-\Delta )^{\frac{1}{2}} v_\delta$ are well defined at $(t,x)$.  At the point $(t,x)$, it follows by direct computation that

     \begin{align}
  \nonumber&(u^\delta-v_\delta)_t -A|\nabla(u^\delta-v_\delta)| -B|\ell| +\lambda (u^\delta-v_\delta)+ \epsilon(-\Delta)^{\frac{1}{2}}(u^\delta-v_\delta)\\
 \nonumber \le &  u^\delta_t + H_\ell(t,x, u^\delta, \nabla u^\delta)+\eps (-\Delta )^{\frac{1}{2}}u^\delta - (v_\delta)_t -H(t,x, v_\delta, \nabla v_\delta)-\eps (-\Delta )^{\frac{1}{2}}v_\delta\\
\nonumber   \le & 2\vartheta,
  \end{align} which clearly implies that
   \begin{align*}
    \varphi_t(t,x) - A|\nabla \varphi(t,x)|- B|\ell| + \lambda (u^\delta(t,x)-v_\delta(t,x)) + \epsilon(-\Delta)^{\frac{1}{2}}\varphi(t,x) \le 2\vartheta.
   \end{align*} This establishes \eqref{eq:eq_for_perturbed_fd}, and thereby proves  \eqref{eq:conclu_perturb_1}.  The proof of \eqref{eq:conclu_perturb_2} is  similar.

     \end{proof}

   \section{The law of diminishing oscillation and $C^{1,\alpha}$ estimate}
     \subsection{The law of diminishing oscillation}
  In our quest to prove $C^{1,\alpha}$-type regularity, the next proposition plays a pivotal role.
\begin{prop}\label{thm:diminshing_oscillation}
Let $u$ be an upper semicontinuous function such that $u\le 1$ in $[-2,0]\times \R^n$ .  Also, $u$ satisfies
\begin{align}
\label{eq:osc_sub_sol}
u_t - A |\nabla u| +  \epsilon(-\Delta)^{\frac{1}{2}} u \le \vartheta_0,
\end{align} interpreted in the viscosity sense, in $[-2,0]\times B_{2+2A}$. Assume further that there is a $\mu > 0$ such that
\begin{align*}
|\{u \le 0\}\cap [-2,-1]\times B_1| \ge \mu.
\end{align*}
 Then, for sufficiently small $\vartheta_0$, there is a $\theta \in (0,1))$  such that $ u\le 1-\theta $ in $[-1,0]\times B_1$. (The maximal value of $\theta$ depends on $A,~ \epsilon$ and $n$.)
\end{prop}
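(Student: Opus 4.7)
The strategy is a De Giorgi--Silvestre style diminishing oscillation argument in the critical regime, extended to accommodate the convex Hamiltonian term $-A|\nabla u|$ in the subsolution inequality. First I would pass to the dual $w := 1-u$. Since $(-\Delta)^{1/2}$ is linear and annihilates constants, $w$ is lower semicontinuous, nonnegative on $[-2,0]\times \R^n$, greater than or equal to $1$ on $\{u\le 0\}\cap [-2,-1]\times B_1$ (a set of measure at least $\mu$), and satisfies in the viscosity sense
\begin{equation*}
w_t + A|\nabla w| + \epsilon (-\Delta)^{1/2} w \ge -\vartheta_0 \qquad \text{on } [-2,0]\times B_{2+2A}.
\end{equation*}
In these variables the problem becomes that of showing a nonnegative viscosity supersolution of a critical HJB-type nonlocal equation, which is large in the measure sense on the past cylinder, must be pointwise bounded away from zero in the future.

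The crux is the construction of an explicit classical barrier $\Psi \in C^{1,2}([-2,0]\times\R^n)$, nonnegative, with the following properties: $\Psi \le C_0\, \mathbf{1}_{[-2,-1]\times B_1}$ on the past time-slab (in a form that can be dominated by $w$ via the measure assumption after averaging translates); $\Psi_t + A|\nabla \Psi| + \epsilon (-\Delta)^{1/2}\Psi \le 0$ pointwise on $[-1,0]\times B_{2+2A}$; and $\Psi \ge \theta$ on $[-1,0]\times B_1$ for some $\theta = \theta(A,\epsilon,n,\mu)>0$. Such a barrier is built by adapting the critical bump construction of \cite{silvestre:2010}. The essential analytic point is that for a smooth, well-chosen profile the singular integral $(-\Delta)^{1/2}\Psi$ supplies a strongly negative contribution at interior points of positivity (the nonlocal tail dominates local behavior in the critical regime), and this can be tuned to absorb both $\Psi_t$ and the drift loss $A|\nabla\Psi|$ simultaneously.

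With $\Psi$ in hand, I would apply the viscosity comparison principle for the HJB-type operator $\Lop \varphi := \varphi_t + A|\nabla \varphi| + \epsilon (-\Delta)^{1/2}\varphi$, which is convex in $\nabla \varphi$ and monotone in $\varphi$, in the spirit of Theorem~\ref{thm:continuous dependence}. The measure assumption enters when dominating the initial profile of $\Psi$ by $w$: writing $\Psi$ (or a rescaled version) as an integral of translates supported in $B_1$, the pointwise inequality $w\ge \mathbf{1}_{\{u\le 0\}}$ on the past slab averages out against a set of measure at least $\mu$ to yield the required domination. Taking $\vartheta_0$ small enough that $C\vartheta_0 \le \theta/2$, comparison then yields $w\ge \theta/2$ on $[-1,0]\times B_1$, i.e.\ $u\le 1-\theta/2$ there.

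The principal obstacle is the construction of $\Psi$. At the critical exponent, the drift $A|\nabla \Psi|$ and the diffusion $\epsilon (-\Delta)^{1/2}\Psi$ scale identically under parabolic dilation $(t,x)\mapsto (\lambda t,\lambda x)$, so the drift cannot be defeated by a rescaling argument as it can when $s>1$. The shape of $\Psi$ must be engineered so that the gain from the nonlocal tail strictly dominates both the local gradient penalty and the temporal relaxation, uniformly on the target cylinder, with the dependence on $A,\epsilon,n,\mu$ made explicit. This is precisely the step where Silvestre's critical-case methodology is genuinely extended: the classical construction must be widened (quantified by $A$) so that the barrier continues to be a subsolution after the convex first-order term is added to the operator.
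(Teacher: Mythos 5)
Your high-level philosophy (a barrier argument driving a diminish-of-oscillation step) is in the right ballpark, but the proposal has both a wrong analytical claim and a serious unfilled gap at exactly the point where the proof is hard.

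\textbf{The sign claim on the half-Laplacian is backwards.} You assert that for a smooth well-chosen profile $\Psi$ the singular integral $(-\Delta)^{1/2}\Psi$ ``supplies a strongly negative contribution at interior points of positivity.'' For a compactly supported, nonnegative bump $\Psi$, at an interior point $x_0$ where $\Psi$ is near its maximum one has
$(-\Delta)^{1/2}\Psi(x_0)=C(n,1)\int_{\R^n}\frac{\Psi(x_0)-\Psi(y)}{|x_0-y|^{n+1}}\,dy>0$,
which works \emph{against} the subsolution inequality $\Psi_t+A|\nabla\Psi|+\epsilon(-\Delta)^{1/2}\Psi\le 0$ you need. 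The helpful negativity occurs near the \emph{edge} of the support, where $\Psi$ is small compared to its surroundings; this is precisely the observation used by the paper, which introduces a threshold $\delta_1>0$ such that $(-\Delta)^{1/2}h\le 0$ whenever the bump $h$ is below $\delta_1$, and treats the two regimes (edge vs.\ interior) separately. A barrier that must be both a subsolution at its peak and larger than a fixed $\theta$ on $[-1,0]\times B_1$ cannot be built the way you describe.

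\textbf{The mechanism by which the measure hypothesis is used is missing.} The phrase ``writing $\Psi$ as an integral of translates supported in $B_1$, the pointwise inequality $w\ge\mathbf{1}_{\{u\le 0\}}$ on the past slab averages out'' is the whole content of the lemma, and it is not substantiated: a supremum of subsolutions does not see the measure, and an integral of translated barriers that is simultaneously a classical subsolution, dominated by $w$ on the past slab, and bounded below by $\theta(\mu)$ in the future is exactly the object you would have to construct --- you haven't. The paper avoids this altogether with a contradiction argument organized around an a priori unknown barrier $1-\gamma(t)h(t,x)+\vartheta_0(2+t)$, where $h(t,x)=\beta(|x|+At)$ is a traveling bump and $\gamma$ solves the ODE $\gamma'(t)=c_0\,|\{u(t,\cdot)\le 0\}\cap B_1|-C_1\gamma(t)$, $\gamma(-2)=0$. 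Fubini turns the measure hypothesis into $\gamma(t)\ge c_0 e^{-2C_1}\mu$ on $[-1,0]$, giving $\theta$ directly once $u\le 1-\gamma(t)+2\vartheta_0$ is established; the latter is proved by examining the viscosity contact point of $u+\gamma h-\vartheta_0(2+t)$, where the nonlocal term sees the set $\Omega=\{u(t_0,\cdot)\le 0\}\cap B_1$ and yields, after two cases ($h(t_0,x_0)\le\delta_1$ or $>\delta_1$), a contradiction with the ODE. Your sketch contains neither the ODE (the carrier of the measure information) nor the contact-point estimate on $\Omega$ (the only place the half-Laplacian genuinely enters), so as written it does not give a proof.
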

\begin{rem}
 The proof of Proposition \ref{thm:diminshing_oscillation} is a straightforward adaptation of similar results by Silvestre \cite{silvestre:2010}. However, for the sake of completeness of our presentation, we provide the full details.
\end{rem}

\begin{proof}[Proof of Proposition \ref{thm:diminshing_oscillation}]

 Consider the following initial value ODE
 \begin{align}
  \label{eq:first_ode}\begin{cases}\frac{d\gamma(t)}{dt} &= c_0 |x\in B_1: u(t,x) \le 0| - C_1\gamma(t)\quad \text{for}\quad t > -2, \\
   \gamma(-2) &= 0,
   \end{cases}
 \end{align} where $\gamma:[-2,0]\mapsto\R $ be a real valued function. The solution of the initial value problem \eqref{eq:first_ode} could be written explicitly, which is

 \begin{align*}
 \gamma(t) = \int_{-2}^t c_0|x\in B_1: u(s,x) \le 0|e^{-C_1(t-s)} ds.
 \end{align*} The goal is to show that, for a possibly large positive constant  $C_1$ and a possibly small positive constant  $c_0$, one has

 \begin{align}
 \label{eq:osci-1} u \le 1-\gamma(t) + 2\vartheta_0.
 \end{align}
By Fubini's theorem, for $t\in [-1,0]$, we have
\begin{align*}
\gamma(t) \ge c_0 e^{-2C_1} |\{u \le 0\}\cap [-2,-1]\times B_1| \ge  c_0 e^{-2C_1} \mu.
\end{align*} One can set $\theta = c_0 e^{-2C_1}\frac{\mu}{2}$ along with the choice that $\vartheta_0 \le  c_0 e^{-2C_1}\frac{\mu}{4}$ and the proposition follows from \eqref{eq:osci-1}.

 Choose a smooth nonincreasing  function $\beta:\R\mapsto \R$ such that $\beta(x) = 1$ if $x\le 1$ and $\beta(x)= 0$ if $x \ge 2$.

Define $h(t,x) = \beta(|x|+ At)= \beta(|x|-A|t|)$ for $t\in [-2,0]$. The function $h(t,x)$ looks like a bump function when considered as a function of $x$ alone. At a point $x$ where $h= 0$ (i.e.  $|x| \ge 2-At$), $(-\Delta)^{\frac{1}{2}} h < 0$. Since $h$ is smooth, $(-\Delta)^{\frac{1}{2}} h$ is also continuous. Hence there is a constant $\delta_1 > 0$ such that

\begin{align*}
  (-\Delta)^{\frac{1}{2}}  h \le 0 \quad \text{if}\quad h<\delta_1.
\end{align*}

Assume, if possible, that $u(t,x) > 1-\gamma(t) + \vartheta_0(2+t)$ for some point $(t,x)\in [-1, 0] \times B_1$.  Also note that $h(t,x) = 1$ for all  $(t,x)\in [-1, 0] \times B_1$ i.e. $ 1-\gamma(t) + \vartheta_0(2+t) =  1-\gamma(t)h(t,x) + \vartheta_0(2+t) $ on $ [-1, 0] \times B_1$.   For small enough $\vartheta_0$, the aim is to arrive at a contradiction by looking at the maxima of the function
\begin{align}
\label{eq:test-function} w(t,x) = u(t,x) + \gamma(t) h(t,x) -\vartheta_0 (2+t).
\end{align} By our assumption, theres is a point $(t,x)\in [-1,0]\times B_1$ such that $w(t,x) > 1$. Let $(t_0, x_0)$ be the point where the function $w$ achieves its maximum i.e.

\begin{align*}
 w(t_0, x_0) = \max_{[-2,0]\times \R^n} w(t,x) .
\end{align*} This maximum is bigger than $1$, therefore it must be achieved inside the support of $h$.  Hence
$$ |x_0| < 2+ A|t_0| \le 2+ 2A.$$
In other words, the function $u$ satisfies \eqref{eq:osc_sub_sol} at $(t_0, x_0)$ in the viscosity sense. Define

\begin{align}
 \label{eq:test-function-final}\varphi(t,x) = w(t_0,x_0)-\gamma(t)h(t,x) + \vartheta_0(2+t).
\end{align}

Then $(u-\varphi)(t,x) = w(t_0,x_0)-w(t,x)$ and since $w$ achieves its global maximum at $(t_0,x_0)$; $u-\varphi$ has a global maximum at $(t_0, x_0)$ and $u(t_0, x_0) = \varphi(t_0,x_0)$. Therefore, for $\kappa > 0$ small enough, we must have
\begin{align}\label{eqn:oscillation1}
 \varphi_t(t_0,x_0) -A|\nabla \varphi(t_0,x_0)| + \mathcal{I}_\kappa^\epsilon(\varphi)(t_0,x_0) + \mathcal{I}^{\epsilon,\kappa}(u)(t_0,x_0) \le \vartheta_0.
\end{align} The next task is to estimate each term on the lefthand side.  To this end, notice that \newline$|\nabla \varphi(t_0,x_0)|= \gamma(t)|\nabla h(t_0,x_0)|$, and
\begin{align}
\nonumber\varphi_t(t_0, x_0) & = -\gamma^\prime(t_0)h(t_0, x_0) -\gamma(t_0) h_t(t_0,x_0) + \vartheta_0\\
\nonumber                                  & =  -\gamma^\prime(t_0)h(t_0, x_0) -\gamma(t_0) \beta^\prime(|x|-A|t|)A + \vartheta_0\\
     \label{eqn:oscillation3}                             & =  -\gamma^\prime(t_0)h(t_0, x_0) +\gamma(t_0) |\nabla h(t_0,x_0)| A + \vartheta_0
\end{align}
 The tricky part however is to obtain a refined estimate  for the term $\mathcal{I}_\kappa^\epsilon(\varphi)$. Let us choose $0< \kappa << 1$. At $t = t_0$, as a function of $x$ alone $u(t_0,\cdot)+ \gamma(t_0) h(t_0, \cdot)$ achieves its maximum at $x_0$. To this end, we denote $ \Omega = \{u(t_0,\cdot)\le 0\}\cap B_1$.
Furthermore,
 \begin{align}
  \notag&\frac{-1}{\epsilon C(n, 1)}\Big( \mathcal{I}_\kappa^\epsilon(\varphi)(t_0,x_0) + \mathcal{I}^{\epsilon,\kappa}(u)(t_0,x_0) \Big)\\
 \notag =  &\int_{B(0,\kappa)}\frac{\varphi(t_0, x_0+z)-\varphi(t_0,x_0)}{|z|^{n+1}} dz + \int_{B(0,\kappa)^C}\frac{u(t_0, x_0+z)-u(t_0,x_0)}{|z|^{n+1}} dz\\
\notag    =  &\int_{B(x_0,\kappa)}\frac{\varphi(t_0, y)-\varphi(t_0,x_0)}{|x_0-y|^{n+1}} dz + \int_{B(x_0,\kappa)^C}\frac{u(t_0, y)-u(t_0,x_0)}{|x_0-y|^{n+1}} dz\\
\notag    =  &\int_{B(x_0,\kappa)}\frac{\varphi(t_0, y)-\varphi(t_0,x_0)}{|x_0-y|^{n+1}} dy + \int_{B(x_0,\kappa)^C\cap \Omega}\frac{u(t_0, y)-u(t_0,x_0)}{|x_0-y|^{n+1}} dz \\&\notag\hspace{7cm}+  \int_{B(x_0,\kappa)^C\cap \Omega^C}\frac{u(t_0, y)-u(t_0,x_0)}{|x_0-y|^{n+1}} dy\\
\notag     \le &\int_{B(x_0,\kappa)}\frac{\varphi(t_0, y)-\varphi(t_0,x_0)}{|x_0-y|^{n+1}} dy + \int_{B(x_0,\kappa)^C\cap \Omega^C}\frac{\varphi(t_0, y)-\varphi(t_0,x_0)}{|x_0-y|^{n+1}} dy \\\notag&\hspace{7cm}+  \int_{B(x_0,\kappa)^C\cap \Omega}\frac{u(t_0, y)-u(t_0,x_0)}{|x_0-y|^{n+1}} dy\\
\notag     = & \int_{\R^n}\frac{\varphi(t_0, y)-\varphi(t_0,x_0)}{|x_0-y|^{n+1}} dy + \int_{B(x_0,\kappa)^C\cap \Omega}\frac{u(t_0, y)+\gamma(t_0)h(t_0,y)-u(t_0,x_0)-\gamma(t_0)h(t_0,x_0)}{|x_0-y|^{n+1}} dy\\
  \label{eq:osci-2}   = &-\gamma(t_0)\int_{\R^n}\frac{h(t_0, y)-h(t_0,x_0)}{|x_0-y|^{n+1}} dy + \int_{B(x_0,\kappa)^C\cap \Omega}\frac{u(t_0, y)+\gamma(t_0)h(t_0,y)-u(t_0,x_0)-\gamma(t_0)h(t_0,x_0)}{|x_0-y|^{n+1}} dy.
      \end{align}

 Let $z\in \R^n$ such that $u(t_0, z)\le 0$, then
 \begin{align}
 \label{eq:osci-3}u(t_0,z)+ \gamma(t_0)h(t_0,z)-u(t_0,x_0)-\gamma(t_0) h(t_0,x_0) \le \gamma(t_0)-1,
\end{align}
 as $ u(t_0,x_0)+\gamma(t_0)h(t_0,x_0) = w(t_0,x_0)+\vartheta_0(2+ t_0) > 1$.

 Choose $c_0$ small enough such that  $\gamma(t_0) < \frac{1}{2}$, then by \eqref{eq:osci-3}

\begin{align}
\label{eq:osci-4}u(t_0,z)+ \gamma(t_0)h(t_0,z)-u(t_0,x_0)-\gamma(t_0) h(t_0,x_0) \le -\frac{1}{2}.
\end{align}

Therefore, for  such a choice of $c_0$, we must have by \eqref{eq:osci-2} and \eqref{eq:osci-4}
\begin{align}
\notag&\frac{-1}{\epsilon C(n, 1)}\Big( \mathcal{I}_\kappa^\epsilon(\varphi)(t_0,x_0) + \mathcal{I}^{\epsilon,\kappa}(u)(t_0,x_0) \Big)\\
\notag\le &  -\gamma(t_0)\int_{\R^n}\frac{h(t_0, y)-h(t_0,x_0)}{|x_0-y|^{n+1}} dy -\frac{1}{2}  \int_{B(x_0,\kappa)^C\cap \Omega} \frac{1}{|z-x_0|^{n+1} }\, dz\\
\label{eq:osci-5}\le& \frac{\gamma(t_0)}{C(n,1)} (-\Delta)^{\frac{1}{2}} h(t_0,x_0) -C_0 |\Omega -B(x_0,\kappa)|
\end{align} where  $C_0$ is a universal constant.

Note that as $\kappa \rightarrow 0$, the measure of the set $|\Omega \backslash B(x_0,\kappa)| \rightarrow |\Omega|$.  We now have to consider two different scenarios depending on $h(t_0,x_0)$ and arrive at contradictions in both cases.  We definitely have either $h(t_0, x_0) \le \delta_1$ or $h(t_0,x_0) >\delta_1$. 

 In the case where $h(t_0,x_0) \le \delta_1$,  one has $ (-\Delta )^{\frac{1}{2}} h(t_0,x_0) \le 0$. Hence

\begin{align}\label{eqn:oscillation-4}
 \Big(\mathcal{I}_\kappa^\epsilon(\varphi)(t_0,x_0) + \mathcal{I}^{\epsilon,\kappa}(u)(t_0,x_0) \Big) \ge \epsilon C(n,1)C_0|\Omega \backslash B(x_0,\kappa)|
\end{align} We plug \eqref{eqn:oscillation-4} and \eqref{eqn:oscillation3} into \eqref{eqn:oscillation1} to obtain

\begin{align*}
 -\gamma^\prime(t_0) h(t_0,x_0) + \gamma(t_0) |\nabla h(t_0,x_0)| A -\gamma(t_0) |\nabla h (t_0,x_0)| A +\mathcal{I}_\kappa^\epsilon(\varphi)(t_0,x_0) + \mathcal{I}^{\epsilon,\kappa}(u)(t_0,x_0) +\vartheta_0\le \vartheta_0.
\end{align*} In other words

\begin{align*}
 \gamma^\prime(t_0) h(t_0,x_0) \ge   \epsilon C(n, 1)C_0 |\Omega \backslash B(x_0,\kappa)|,
\end{align*} which is a contradiction to \eqref{eq:first_ode} for any $C_1$ if $\kappa$ is small enough and $c_0$ is chosen small enough so that it satisfies  $c_0\le  \epsilon C(n,1)C_0$,.

 We now turn our attention to the other case $h(t_0,x_0)\ge \delta_1$.  Since $h(t_0,x_0)$ is a smooth function with compact support, we must have
 $|(-\Delta)^{\frac{1}{2}} h| \le C$ for some $C > 0$. Therefore we have the estimate

 \begin{align}
  \label{eqn:oscillation-5}  \mathcal{I}_\kappa^\epsilon(\varphi)(t_0,x_0) + \mathcal{I}^{\epsilon,\kappa}(u)(t_0,x_0)  \ge -\epsilon C\gamma(t_0)+c_0|\Omega \backslash B(x_0,\kappa)|.
 \end{align}  We plug \eqref{eqn:oscillation-5} and \eqref{eqn:oscillation3} into \eqref{eqn:oscillation1} and obtain

 \begin{align*}
- C^\prime \gamma(t_0)+c_0|\Omega \backslash B(x_0,\kappa)| -\gamma^\prime(t_0)h(t_0,x_0) \le 0.
 \end{align*}

 We replace $\gamma^\prime(t_0)$ by using \eqref{eq:first_ode} in above and pass to limit $\kappa\rightarrow 0$ to obtain

 \begin{align*}
 (C_1h(t_0,x_0) -C)\gamma(t_0)+ c_0(1-h(t_0,x_0))|\Omega| \le 0,
 \end{align*} which is contradiction under for large enough $C_1$ as $h(t_0,x_0) \ge \delta_1$.

\end{proof}

\begin{thm}[diminishing oscillation]\label{thm:diminishing_oscilation}
 Let $\xi, \zeta$ be two bounded continuous  functions satisfying the inequalities
 \begin{align*}
   \xi_t -A|\nabla \xi | +\epsilon (-\Delta)^{\frac{1}{2}} \xi \le 0\\
   \zeta_t + A|\nabla \zeta|+\epsilon(-\Delta)^{\frac{1}{2}} \zeta \ge 0
 \end{align*} in the viscosity sense in $Q_1 = [-1,0]\times B_1$. Furthermore,
 \begin{align}\label{eq:totality}
  \{(t,x)\in Q_1 :\xi \le 0\}\cup \{(t,x)\in Q_1: \zeta \ge 0\} = Q_1.
 \end{align} Then there are universal constants $\theta \in (0,1)$ and $\alpha_0 >0$ (depending only on $A$, $\epsilon$ and $n$) such that if
 \begin{align*}
 \max\big( |\xi|,|\zeta| \big) &\le 1\quad &\text{in} \quad Q_1 = [-1,0]\times B_1,\\
 \max\big(|\xi|, |\zeta|\big) &\le 2 |(4+4A)x|^\alpha -1 \quad &\text{in}\quad  [-1,0]\times B_1^c
 \end{align*} for some $0<\alpha< \alpha_0$, then
 \begin{align*}
  \min(\textrm{osc}_{Q_{1/(4+4A)}}\xi, \text{osc}_{Q_{1/(4+4A)}}\zeta) \le 2(1-\theta).
 \end{align*}

\end{thm}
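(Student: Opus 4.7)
The plan is to reduce the question to a single-subsolution statement that Proposition~\ref{thm:diminshing_oscillation} can digest. Observe first that if $\zeta$ satisfies $\zeta_t+A|\nabla\zeta|+\epsilon(-\Delta)^{1/2}\zeta\ge 0$, then $\widetilde\zeta:=-\zeta$ satisfies $\widetilde\zeta_t-A|\nabla\widetilde\zeta|+\epsilon(-\Delta)^{1/2}\widetilde\zeta\le 0$, exactly the inequality structure of $\xi$, with $\{\zeta\ge 0\}=\{\widetilde\zeta\le 0\}$. The covering hypothesis \eqref{eq:totality} then forces at least one of $|\{\xi\le 0\}\cap Q_1|$ or $|\{\widetilde\zeta\le 0\}\cap Q_1|$ to be at least $|Q_1|/2$, and after renaming I would work only with the first case, aiming to prove $\operatorname{osc}_{Q_{1/(4+4A)}}\xi\le 2(1-\theta)$.

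Next, a parabolic rescaling will transplant the hypotheses of Proposition~\ref{thm:diminshing_oscillation} onto $\xi$. Set $r:=1/(4+4A)$ and $\widetilde\xi(t,x):=\xi(rt,rx)$. Since the equation has critical order $s=1$, the chain rule gives
\[
\widetilde\xi_t-A|\nabla\widetilde\xi|+\epsilon(-\Delta)^{1/2}\widetilde\xi \,=\, r\bigl(\xi_t-A|\nabla\xi|+\epsilon(-\Delta)^{1/2}\xi\bigr)(rt,rx)\le 0
\]
on the enlarged domain $[-(4+4A),0]\times B_{4+4A}$, which comfortably contains the $[-2,0]\times B_{2+2A}$ demanded by Proposition~\ref{thm:diminshing_oscillation}. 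The bound $|\widetilde\xi|\le 1$ carries over to $[-(4+4A),0]\times B_{4+4A}$, while outside this cylinder the growth hypothesis reads $|\widetilde\xi(t,x)|\le 2|x|^\alpha-1$. To obtain a globally bounded candidate I would truncate $\widehat\xi:=\min(\widetilde\xi,1)$, which preserves $\{\widehat\xi\le 0\}=\{\widetilde\xi\le 0\}$. Comparing the nonlocal terms at any test-function contact shows that $\widehat\xi$ satisfies the same subsolution inequality with an additional right-hand side
\[
\epsilon\,C(n,1)\int_{|y|>4+4A}\frac{(2|y|^\alpha-2)_+}{|y|^{n+1}}\,dy \,\le\, \epsilon\,C_n\,\frac{(4+4A)^{\alpha-1}}{1-\alpha},
\]
which plays the role of the perturbation $\vartheta_0$ in Proposition~\ref{thm:diminshing_oscillation} and can be made arbitrarily small by choosing $\alpha_0$ small.

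The main obstacle is verifying the measure hypothesis of Proposition~\ref{thm:diminshing_oscillation}: it demands $|\{\widehat\xi\le 0\}\cap[-2,-1]\times B_1|\ge\mu$ in the rescaled variables, whereas the totality condition only delivers positive measure globally in the rescaled image of $Q_1$. I expect to handle this by a pigeonhole/covering argument: tile the rescaled past $[-(4+4A),-r]\times B_{4+4A}$ by a bounded number of translates of $[-2,-1]\times B_1(\bar x)$, locate one on which $\{\widehat\xi\le 0\}$ occupies a fixed fraction $\mu$ of the available mass (guaranteed by the global lower bound), and apply Proposition~\ref{thm:diminshing_oscillation} centered at that translate; then chain the resulting pointwise bound $\widehat\xi\le 1-\theta'$ back to the origin through a controlled number of such applications, where the admissible growth $2|x|^\alpha-1$ ensures the iteration stays consistent. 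Once this is in place, undoing the truncation (automatic since $1-\theta'<1$) and the rescaling yields $\xi\le 1-\theta'$ on $Q_{1/(4+4A)}$; combined with $\xi\ge -1$ this produces $\operatorname{osc}_{Q_{1/(4+4A)}}\xi\le 2-\theta'$, and relabelling $\theta'\mapsto 2\theta$ delivers the claimed factor $2(1-\theta)$.
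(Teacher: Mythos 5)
The reduction, rescaling, truncation $\widehat\xi=\min(\widetilde\xi,1)$, and the estimate of the nonlocal error created by truncation all track the paper's proof and are fine. The gap is exactly at the step you flag as the ``main obstacle,'' and the fix is simpler than the chaining you sketch --- in fact the chaining idea as stated does not work. Applying Proposition~\ref{thm:diminshing_oscillation} at a translate $[-2,-1]\times B_1(\bar x)$ gives you $\widehat\xi\le 1-\theta$ on $[-1,0]\times B_1(\bar x)$, which is disjoint in time from the region $[-2,-1]\times B_1(0)$ where a second application would need its measure input; ``chaining'' through such applications is not a sequence of implications that closes up, and the number of steps (hence the degradation of $\theta$) is not under any a~priori universal control. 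You would be adding a nontrivial and unjustified step to a place where none is needed.

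The missing observation is that you should not read the totality hypothesis \eqref{eq:totality} over all of $Q_1$; read it over the sub-cylinder $S:=[-2/R,-1/R]\times B_{1/R}\subset Q_1$ with $R=4+4A$, which is precisely the pre-image of $[-2,-1]\times B_1$ under your rescaling. Since $\{\xi\le 0\}\cup\{\zeta\ge 0\}\supseteq S$, one of $\{\xi\le 0\}\cap S$, $\{\zeta\ge 0\}\cap S$ has measure at least $|S|/2$; after rescaling this becomes
\[
\big|\{\widetilde\xi\le 0\}\cap([-2,-1]\times B_1)\big|\ge\frac{|B_1|}{2}
\quad\text{or}\quad
\big|\{\widetilde\zeta\ge 0\}\cap([-2,-1]\times B_1)\big|\ge\frac{|B_1|}{2},
\]
and you do the ``without loss of generality'' \emph{after} this dichotomy, not before. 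With $\mu=|B_1|/2$ the hypothesis of Proposition~\ref{thm:diminshing_oscillation} is met directly by $\widehat\xi$ (or $\widehat{\widetilde\zeta}$), and the conclusion $\widehat\xi\le 1-\theta$ on $[-1,0]\times B_1$ pulls back to $\xi\le 1-\theta$ on $Q_{1/R}$, giving $\operatorname{osc}_{Q_{1/R}}\xi\le 2-\theta=2(1-\theta/2)$. Your WLOG based on which of $\xi,-\zeta$ covers half of all of $Q_1$ could genuinely pick the wrong function: the one with large sublevel mass over $Q_1$ need not be the one with large sublevel mass over $S$, so that step must be tied to $S$.
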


     \begin{proof}
 Let $R = 4+ 4A$. Consider the following rescaled versions of $\xi$ and $\zeta$:
 \begin{align*}
  \tilde{\xi} = \xi\big(\frac{t}{R}, \frac{x}{R}\big)\quad\text{and}\quad \tilde{\zeta} = \zeta\big(\frac{t}{R}, \frac{x}{R}\big).
 \end{align*}

 By the condition \eqref{eq:totality}
 \begin{align*}
 \text{either}\quad\quad  |\{\tilde{\xi} \le 0\} \cap ([-2,-1]\times B_1)| \ge \frac{|B_1|}{2},\\
 \text{or}\quad\quad  |\{\tilde{\zeta} \ge 0\} \cap ([-2,-1]\times B_1)| \ge \frac{|B_1|}{2}.
 \end{align*} Without loss of generality we may assume that the former is  true and it would be enough if we prove that

 \begin{align}
\label{eq:revise-eq-1}\textrm{osc}_{Q_{1/R}}\xi \le 2(1-\theta).
 \end{align} 
 
 To this end, if we were able to apply Proposition \ref{thm:diminshing_oscillation} to $\tilde{\xi}$, then there would exist a constant $\tilde{\theta}\in (0,1)$, depending only on $A,~\eps$ and $n$, such that
   \begin{align*}
        \sup_{(t,x)\in Q_1}\tilde{\xi}(t,x) \le 1-\tilde{\theta}\quad \text{i.e.} \quad \sup_{(t,x)\in Q_{1/R}}\xi(t,x) \le (1-\tilde{\theta}),
   \end{align*} which implies 
   \begin{align}
  \label{eq:revise-eq-2} \textrm{osc}_{Q_{1/R}}\xi \le (2-\tilde{\theta}).
   \end{align}Clearly, \eqref{eq:revise-eq-1} would then follow from \eqref{eq:revise-eq-2} if we simply choose $\theta =\frac{\tilde{\theta}}{2}$.

 Only condition that is missing here is that $ \tilde{\xi}$ needed to be bounded above by $1$.  To this end we define
 \begin{align*}
  u = \min(1,\tilde{\xi})
 \end{align*} and identify the inequality satisfied by $u$ in the viscosity sense.   
\vspace{.2cm}

  Note that inside $Q_R$, $\tilde{\xi} \le 1$ i.e. $ u= \tilde{\xi}$ in $Q_R$. Let $\varphi$ be a test function such that $u-\varphi$ has a global  maximum at $(t,x)$ and $u(t,x)=\varphi(t,x)$ where $(t,x) \in Q_{2+2A}$. Therefore $\tilde{\xi}-\varphi$ also has  a maximum at $(t,x)\in Q_{2+2A}$, which is global in $Q_R$. Notice that the point $(t,x)$ may not be a point of global maximum  in $[-(2+2A), 0]\times \R^n$ for the function  $\tilde{\xi}-\varphi$.  However, also note that $[-(2+2A), 0]\times B_\kappa(x) \subset [-(2+2A), 0]\times B_1(x)\subset Q_R$ for all $\kappa \in (0,1)$. Therefore, we can modify $\varphi$ outside $ [-(2+2A), 0]\times B_1(x)$ and obtain another test function $\tilde{\varphi}$ such that 
  \begin{align}
 \label{eq:revise-eq-3} \tilde{\varphi}(s,y) = \varphi(s,y)\quad\text{if}\quad (s,y) \in [-(2+2A), 0]\times B_1(x)
   \end{align}
  and $\tilde{\xi}-\tilde{\varphi}$ has a global maximum at $(t,x)$ in $[-R,0]\times\R^n$.

  Therefore at $(t,x)$, for all $\kappa\in (0,1)$,

  \begin{align}
 \label{eq:revise-eq-4}  \partial_t \tilde{\varphi}(t,x) -A|\nabla\tilde{ \varphi} (t,x)| +\mathcal{I}^\epsilon_\kappa \tilde{\varphi}(t,x) +\mathcal{I}^{\epsilon,\kappa} \tilde{\xi}(t,x) \le 0.
   \end{align} At this point we invoke \eqref{eq:revise-eq-3}, and then \eqref{eq:revise-eq-4} simply becomes 
   
    \begin{align}
 \label{eq:revise-eq-5}  \partial_t \varphi(t,x) -A|\nabla \varphi (t,x)| +\mathcal{I}^\epsilon_\kappa \varphi(t,x) +\mathcal{I}^{\epsilon,\kappa} \tilde{\xi}(t,x) \le 0.
   \end{align}

    We now estimate the quantity  $\mathcal{I}^{\epsilon,\kappa}u(t,x)$ and proceed as follows. For all $\kappa \in (0,1)$,

\begin{align*}
 &\mathcal{I}^{\epsilon,\kappa}\tilde{\xi}(t,x)-\mathcal{I}^{\epsilon,\kappa}u(t,x)\\
\le & \epsilon C(n, 1) \int_{x+y\notin B_{4+4A}} \big(\tilde{\xi}(t,x+y) -1\big)^{+} \frac{dy}{|y|^{n+1}}\\
\le &  \epsilon C(n, 1) \int_{x+y\notin B_{4+4A}} 2\big(|x+y|^{\alpha_0} -1\big)\frac{dy}{|y|^{n+1}} \\
\le & \vartheta_0, \quad(\vartheta_0~ \text{is from  Proposition \ref{thm:diminshing_oscillation}})
\end{align*} which holds, as a result of Fatou's lemma, for small enough $\alpha_0$. Therefore, the function $u$ satisfies the inequality 
\begin{align*}
   u_t -A|\nabla u|+\epsilon (-\Delta)^{\frac{1}{2}} u \le \vartheta_0
\end{align*} in the viscosity sense, for a suitably chosen $\vartheta_0$, in $[-2,0]\times B_{2+2A}$. Therefore we can apply  Proposition \ref{thm:diminshing_oscillation}  to $u$ and conclude the theorem.
\end{proof}

    \subsection{ $C^{1,\alpha}$-regularity: the end game}
 We begin this subsection with the following lemma.
\begin{lem} \label{lem:scalinglemma}
  Let $u$ be a bounded continuous function in $[0,T]\times \R^n$  such that it satisfies the inequalities
  \begin{align}
   \label{eq:revise-eq-6} u_t -A|\nabla u| -B +\lambda u+\epsilon(-\Delta)^{\frac{1}{2}} u \le 0, \\
   \label{eq:revise-eq-7} u_t +A|\nabla u| +B +\lambda u+\epsilon(-\Delta)^{\frac{1}{2}} u \ge 0
  \end{align} in the viscosity sense in $[0,T]\times \R^n$. Define
  \begin{align*}
  \xi(t,x) = \begin{cases}e^{\lambda t} u(t,x) -\frac{B}{\lambda}\big(e^{\lambda t} -1\big)\quad\text{if}\quad \lambda \neq 0\\
                       u(t,x) -Bt \quad \text{if }\quad \lambda = 0.
        \end{cases}
  \end{align*}

  \begin{align*}
  \zeta(t,x) = \begin{cases}e^{\lambda t} u(t,x) +\frac{B}{\lambda}\big(e^{\lambda t} -1\big)\quad\text{if}\quad \lambda \neq 0\\
                       u(t,x) +Bt \quad \text{if }\quad \lambda = 0.
        \end{cases}
  \end{align*}  Then $\xi$ and $\zeta$ are two bounded continuous  functions on $[0,T]\times \R^n$ and satisfy the inequalities
 \begin{align}
   \label{eq:transformed-1}\xi_t -A|\nabla \xi | +\epsilon (-\Delta)^{\frac{1}{2}} \xi \le 0\\
   \label{eq:transformed-2} \zeta_t + A|\nabla \zeta|+\epsilon(-\Delta)^{\frac{1}{2}} \zeta \ge 0
 \end{align} in the viscosity sense in $[0,T]\times \R^n$.
 \end{lem}


\begin{proof}
 Let $(t_0,x_0)\in (0,T)\times \R^n$ be a point and $\varphi$ be a test function such that $\varphi(t_0,x_0) = \xi(t_0,x_0)$ and  $\xi-\varphi$ has a global maximum at $(t_0,x_0)$.  Now define
   \begin{align*}
  \psi(t,x) = \begin{cases}e^{-\lambda t} \varphi(t,x) +\frac{B}{\lambda}\big(1-e^{-\lambda t}\big)\quad\text{if}\quad \lambda \neq 0\\
                       \varphi(t,x) +Bt \quad \text{if }\quad \lambda = 0.
        \end{cases}
  \end{align*} Then $u(t,x)-\psi(t,x) = e^{-\lambda t}\big(\xi(t,x)- \varphi(t,x)\big)$, which means $(t_0,x_0)$ is also a point of global maximum for $u-\psi$. Hence,

  \begin{align*}
  &\psi_t(t_0,x_0) -A|\nabla \psi(t_0,x_0)| -B +\lambda \psi(t_0,x_0)+\epsilon(-\Delta)^{\frac{1}{2}} \psi(t_0,x_0) \le 0\\
  \text{i.e.} \quad\quad & e^{-\lambda t_0}\Big[  \varphi_t -A|\nabla \varphi(t_0,x_0) | +\epsilon (-\Delta)^{\frac{1}{2}} \varphi(t_0,x_0)\Big] \le 0\\
   \text{i.e.} \quad\quad &   \varphi_t -A|\nabla \varphi(t_0,x_0) | +\epsilon (-\Delta)^{\frac{1}{2}} \varphi(t_0,x_0)\le 0,
  \end{align*}   which proves that \eqref{eq:transformed-1} is satisfied in the viscosity sense. The proof that  \eqref{eq:transformed-2} holds in the viscosity sense is similar.

\end{proof}

\begin{rem}
    Let $v$ be the unique Lipschitz continuous viscosity solution of \eqref{eq:HJB-eq-viscous-1}, and $l$ be a unit vector in $\R^n$.  For a nonzero scalar $h$, define $u(t,x) = \frac{v(t, x+hl)-v(t,x)}{|h|}$, a difference quotient of $v$ along $l$. Clearly, $v(\cdot, \cdot+ hl)$ is the unique Lipschitz continuous solution of \eqref{eq:HJB-eq-viscous-pert} with initial condition $ v(0, \cdot+hl)$ and $\ell = hl$.  Therefore, we can apply Lemma \ref{lemma: equation for the finte difference} and conclude that $v(\cdot, \cdot+hl)-v(\cdot,\cdot)$ satisfies \eqref{eq:conclu_perturb_1} and \eqref{eq:conclu_perturb_2} with $\ell = h l$. Furthermore, we invoke the Lipschitz continuity of $v$ in space to conclude that $u(t,x)$ is bounded and continuous, and divide the inequalities \eqref{eq:conclu_perturb_1} and \eqref{eq:conclu_perturb_2} throughout by $|h|$ ($=|\ell|$) to see that $u(t,x)$ satisfies the inequalities \eqref{eq:revise-eq-6} and \eqref{eq:revise-eq-7} in the viscosity sense. The next theorem is about establishing H\"{older} continuity estimates for a bounded and continuous  function, such as $u(t,x)$, that satisfies  \eqref{eq:revise-eq-6} and \eqref{eq:revise-eq-7}.
\end{rem}

\begin{thm}($C^{0,\alpha}$-estimate)\label{thm:linearized}
 Let $u$ be a bounded continuous function in $[0,T]\times \R^n$  such that it satisfies the inequalities
  \begin{align*}
   u_t -A|\nabla u| -B +\lambda u+\epsilon(-\Delta)^{\frac{1}{2}} u \le 0, \\
   u_t +A|\nabla u| +B +\lambda u+\epsilon(-\Delta)^{\frac{1}{2}} u \ge 0
  \end{align*} in the viscosity sense in $(0,T)\times \R^n$.  Then there is a positive constant $\alpha > 0$ (depending on $A, B, n$ and $\epsilon$) such that, for every $t > 0$ and $x\in \R^n$, the function $u$ is H\"{o}lder continuous of exponent $\alpha$ at $(t,x)$. Moreover, it follows that there exist constants $C$ and $K$ (depending on $A, n, \epsilon$ ) such that 
  \begin{align}
 \label{holder-regularity} |u(t,x)-u(s,y)| \le C(||u(0,\cdot)||_{L^\infty} + K)\Big[\frac{|x-y|^\alpha+|t-s|^\alpha}{t^\alpha}\Big]
  \end{align} for all $x, y\in \R^n$ and $0< s\le t< T$.

\end{thm}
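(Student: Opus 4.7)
The plan is to combine Lemma \ref{lem:scalinglemma} with an iterated application of Theorem \ref{thm:diminishing_oscilation}, in the spirit of Silvestre's oscillation-decay scheme. First I would invoke Lemma \ref{lem:scalinglemma} to pass from $u$ to a pair $(\xi,\zeta)$ satisfying the cleaner inequalities \eqref{eq:transformed-1}--\eqref{eq:transformed-2}. The explicit formulas yield $\xi\le\zeta$ with $\zeta-\xi=\tfrac{2B}{\lambda}(e^{\lambda t}-1)$ (or $2Bt$ when $\lambda=0$), a function of $t$ alone that is bounded on $[0,T]$. The crucial observation is that for any common constant $m$, the covering hypothesis $\{\xi\le m\}\cup\{\zeta\ge m\}=Q_1$ of Theorem \ref{thm:diminishing_oscilation} is automatic from $\xi\le\zeta$; moreover the inequalities \eqref{eq:transformed-1}--\eqref{eq:transformed-2} are invariant under the parabolic scaling $(t,x)\mapsto(\mu t,\mu x)$ natural to the half-Laplacian.

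Fix $(t_0,x_0)$ with $t_0>0$ and let $R=4+4A$, $r_0=\min(t_0,1)$. The proof then proceeds by induction on $j\ge 0$ to produce constants $m_j\in\R$ and $M_0>0$ (depending on $\|u(0,\cdot)\|_{L^\infty}$ and on the data through $B,\lambda,T$, but not on $j$) such that
\begin{align*}
\sup_{Q_{r_j}(t_0,x_0)}|\xi-m_j|\le M_0(1-\theta)^j,\qquad \sup_{Q_{r_j}(t_0,x_0)}|\zeta-m_j|\le M_0(1-\theta)^j,\qquad r_j:=r_0R^{-j},
\end{align*}
together with a matching external growth bound of the form $|\xi-m_j|,\,|\zeta-m_j|\le 2M_0(1-\theta)^j\bigl(|(x-x_0)/r_j|^\alpha+|(t-t_0)/r_j|^\alpha\bigr)$ outside $Q_{r_j}(t_0,x_0)$. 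The base case $j=0$ is the global $L^\infty$-bound on $(\xi,\zeta)$ coming from Lemma \ref{lem:scalinglemma} and the boundedness of $u$. The inductive step parabolically rescales $(\xi-m_j,\zeta-m_j)$ to unit size on $Q_1$, applies Theorem \ref{thm:diminishing_oscilation} (the covering hypothesis being automatic from $\xi\le\zeta$), and scales back to obtain the decrement $(1-\theta)$ at scale $r_{j+1}$.

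The main obstacle is to choose $\alpha$ small enough that the external growth hypothesis of Theorem \ref{thm:diminishing_oscilation} is preserved through one rescaling step; this requires $R^{\alpha}(1-\theta)\le 1$, i.e.\ $\alpha\le -\log(1-\theta)/\log R$, a constant depending only on $A,n,\epsilon$. With this choice the induction closes. A standard dyadic interpolation on the radii $r_j$ then yields $|\xi(t,x)-\xi(s,y)|+|\zeta(t,x)-\zeta(s,y)|\le CM_0\,r_0^{-\alpha}(|t-s|^\alpha+|x-y|^\alpha)$ for $(t,x),(s,y)\in Q_{r_0}(t_0,x_0)$. Passing back to $u$ via $u=e^{-\lambda t}\xi+\tfrac{B}{\lambda}(1-e^{-\lambda t})$ contributes only a Lipschitz-in-time error of order $|t-s|=o(|t-s|^\alpha)$, which is absorbed into the H\"older constant. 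Since the estimate holds at every interior $(t_0,x_0)$ with $r_0\sim t_0$, we recover the prefactor $t^{-\alpha}$ in \eqref{holder-regularity}; the additive $K$ in the statement absorbs the $B$- and $T$-dependent contributions introduced by the reduction in Lemma \ref{lem:scalinglemma}.
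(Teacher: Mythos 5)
Your proposal follows essentially the same route as the paper -- parabolically rescale around $(t_0,x_0)$, normalize to unit size, and iterate the diminishing-oscillation Theorem~\ref{thm:diminishing_oscilation} at geometrically shrinking scales. However, there is a concrete gap in the inductive step as you have written it. You require a single constant $m_j$ with $\sup_{Q_{r_j}(t_0,x_0)}|\xi-m_j|\le M_0(1-\theta)^j$ \emph{and} $\sup_{Q_{r_j}(t_0,x_0)}|\zeta-m_j|\le M_0(1-\theta)^j$. But $\zeta(t_0,x_0)-\xi(t_0,x_0)=\tfrac{2B}{\lambda}\big(e^{\lambda t_0}-1\big)$ (or $2Bt_0$) is a fixed positive number for $t_0>0$, while $(1-\theta)^j\to0$, so this double containment cannot survive the induction. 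Relatedly, the conclusion of Theorem~\ref{thm:diminishing_oscilation} is $\min\big(\mathrm{osc}_{Q_{1/R}}\xi,\,\mathrm{osc}_{Q_{1/R}}\zeta\big)\le 2(1-\theta)$, a \emph{min}, not a max: one application only decreases the oscillation of one of the two functions, so the induction on both does not close as stated.

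The paper's proof handles exactly this by two devices you do not mention. It introduces a third function $\eta$ (the genuine rescaled $u$, with $\xi\le\eta\le\zeta$) whose oscillation is the quantity one ultimately bounds, and it exploits the fact that although $\zeta-\xi$ is not small, its \emph{oscillation} on $Q_{r^{k+1}}$ is of size $O(r^{k+1}BT/D)$, which after rescaling contributes only $\tfrac{2rBT}{D}$; the normalization constant $C_1(A,B,n,\epsilon)$ inside $D$ is then chosen large enough to absorb this error into $2(1-\theta/2)\le 2r^\alpha$. You would need to make this step explicit, either by importing the auxiliary $\eta$ and the error-absorption, or by re-centering the shift at $t_0$ (take $\xi=e^{\lambda(t-t_0)}u-\tfrac{B}{\lambda}\big(e^{\lambda(t-t_0)}-1\big)$, so $\zeta-\xi=O(|t-t_0|)$ near the base point). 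Finally, your generic external growth bound $|x/r_j|^\alpha$ does not match the precise hypothesis $\max(|\xi|,|\zeta|)\le 2|(4+4A)x|^\alpha-1$ of Theorem~\ref{thm:diminishing_oscilation}; the subtracted $1$ is what makes the nonlocal tail integral of $(\tilde\xi-1)^+$ small, and the propagation of this specific form through the induction (the paper's \eqref{eq:holder_oscilllation}) must be verified, not asserted.
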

     
     \begin{proof}
We fix $(t_0,x_0)\in (0, T)\times \R^n$ and consider the following transformations of $u(t,x)$:

\begin{align}
\label{eq:transformation-1} \xi(t,x) = \begin{cases}\frac{e^{\lambda t_0(t+1)}}{e^{\lambda T}\big(||u||_{L^\infty(Q_T)} + 2C_0(B,T,\lambda)+ C_1(A,B,n,\epsilon)\big)} \big[u(t_0(t+1),x_0+ t_0 x) -\frac{B}{\lambda}\big(1-e^{-\lambda t_0(t+1)} \big)\big]\quad\text{if}\quad \lambda \neq 0\\
                     \frac{1}{\big(||u||_{L^\infty(Q_T)} + 2C_0(B,T,0) + C_1(A,B,n,\epsilon)\big)} \big[  u(t_0(t+1),x_0+ t_0x) -Bt_0(t+1) \big] \quad \text{if }\quad \lambda = 0;
        \end{cases}
\end{align} and

\begin{align}
\label{eq:transformation-2} \zeta(t,x) = \begin{cases}\frac{e^{\lambda t_0(t+1)}}{e^{\lambda T}\big(||u||_{L^\infty(Q_T)} + 2C_0(B,T,\lambda)+ C_1(A,B,n)\big)} \big[u(t_0(t+1),x_0+t_0x) +\frac{B}{\lambda}\big(1-e^{-\lambda t_0(t+1)} \big)\big]\quad\text{if}\quad \lambda \neq 0\\
                     \frac{1}{\big(||u||_{L^\infty(Q_T)} + 2C_0(B,T, 0)+ C_1(A,B,n,\epsilon)\big)} \big[  u(t_0(t+1),x_0+t_0x) +Bt_0(t+1) \big] \quad \text{if }\quad \lambda = 0,
        \end{cases}
\end{align} where $(t,x)\in [-1,0]\times \R^n$ and $C_1(A,B,n,\epsilon)$ is a positive constant, depending on the quantities in the parenthesis, to be chosen later and $C_0(B,T,\lambda)$ has the following form
\begin{align*}
C_0(B,T,\lambda) =\begin{cases} \frac{B}{\lambda}\big(1-e^{-\lambda T}\big)\quad \text{if}\quad \lambda \neq 0\\
                             BT \quad \text{if}\quad\lambda = 0.
\end{cases}
\end{align*} A $C^{0,\alpha}$-type estimate for $\xi$ or $\zeta$ at the point $(0,0)$ would result in a $C^{0,\alpha}$ estimate for $u$ at $(t_0,x_0)$. To this end, we define
\begin{align}
\label{eq:transformation-3} \eta(t,x) = \begin{cases}\frac{e^{\lambda t_0(t+1)}}{e^{\lambda T}\big(||u||_{L^\infty(Q_T)} + 2C_0(B,T,\lambda)+ C_1(A,B,n,\epsilon)\big)}u(t_0(t+1),x_0+t_0 x)\quad\text{if}\quad \lambda \neq 0\\
                     \frac{1}{\big(||u||_{L^\infty(Q_T)} + 2BT + C_1(A,B,n,\epsilon)\big)}   u(t_0(t+1),x_0+t_0x)  \quad \text{if }\quad \lambda = 0.
        \end{cases}
\end{align} for $(t,x)\in [-1,0]\times \R^n$  and 
    \begin{align}
           \label{eq:transformation-4} F_\lambda(t)= \begin{cases}\frac{1}{e^{\lambda T}\big(||u||_{L^\infty(Q_T)} + 2C_0(B,T,\lambda)+ C_1(A,B,n,\epsilon)\big)}\frac{B}{\lambda}\big(e^{\lambda t_0(t+1)} -1\big)\quad\text{if}\quad \lambda \neq 0\\
                     \frac{1}{\big(||u||_{L^\infty(Q_T)} + 2C_0(B,T,\lambda) + C_1(A,B,n,\epsilon)\big)} Bt_0(t+1)\quad \text{if }\quad \lambda = 0
        \end{cases}
\end{align} for  $-1\le t \le 0$. Then clearly, $F_\lambda(t)$ is a smooth function of $t$ and is nonnegative if $t\in [-1,0]$. Also, for $\lambda > 0$, 
\begin{align}
 \sup_{-1\le t\le 0}|F_\lambda^\prime(t)| = &\frac{B t_0 e^{\lambda t_0}}{e^{\lambda T}\big(||u||_{L^\infty(Q_T)} + 2C_0(B,T,\lambda) + C_1(A,B,n,\epsilon)\big)}\nonumber\\
                  \label{eq:revision-new-1}    \le & \frac{B T }{\big(||u||_{L^\infty(Q_T)} + 2C_0(B,T,\lambda) + C_1(A,B,n,\epsilon)\big)}.
\end{align} The estimate \eqref{eq:revision-new-1} holds for $\lambda = 0$ as well.

 Furthermore, we rewrite \eqref{eq:transformation-1} and \eqref{eq:transformation-2} as 
\begin{align}
\label{eq:transformation-1.2} \xi(t,x) = \eta(t,x)-F_\lambda(t)\\
\label{eq:transformation-2.2} \zeta(t,x) = \eta(t,x)+F_\lambda(t)
\end{align} where $(t,x)\in [-1,0]\times \R^n$.

Then by Lemma \ref{lem:scalinglemma}, the functions $\xi$ and $\zeta$ are two bounded continuous  functions satisfying the inequalities
 \begin{align}
   \label{eq:transformed-1-2}\xi_t -A|\nabla \xi | +\epsilon (-\Delta)^{\frac{1}{2}} \xi \le 0\\
   \label{eq:transformed-2-2} \zeta_t + A|\nabla \zeta|+\epsilon(-\Delta)^{\frac{1}{2}} \zeta \ge 0
 \end{align} in the viscosity sense in $[-1,0]\times \R^n$. Also note that
 \begin{align}
\label{eq:holder-new-1} \max\{|\xi(t,x)|,|\zeta(t,x)|,|\eta(t,x)|\} \le 1\quad \text{for all}\quad (t,x)\in [-1,0]\times \R^n.
 \end{align} Set $r = \frac{1}{4+4A}$. We wish to show that there exist $\alpha > 0$ such that
 \begin{align}\label{eq:osc-holder}
 \max\Big(\text{osc}_{Q_{r^k}}\xi(t,x), \text{osc}_{Q_{r^k}}\zeta(t,x),\text{osc}_{Q_{r^k}}\eta(t,x)\Big)\le 2 r^{\alpha k}\quad \quad \text{for all}\quad\quad k= 0,1,2,.....
 \end{align} The estimate \eqref{eq:osc-holder} is a necessary and sufficient  for H\"{o}lder continuity (with exponent $\alpha$) of $\xi, \zeta$ and $\eta$ at $(0,0)$.  This H\"{o}lder continuity of $\eta$ at $(0,0)$ is equivalent to H\"{o}lder continuity of $u$ at $(t_0,x_0)$. To prove \eqref{eq:osc-holder}, we follow \cite{silvestre:2010} and find a sequence  $(a_k, b_k)$, $k=0,1,2,........$  such that

\begin{align*}
 a_k\le \xi(t,x),  \eta(t,x),  \zeta(t,x) \le b_k\quad \text{for all }\quad (t,x)\in Q_{r^k},
\end{align*} with $b_k-a_k = 2r^{\alpha k}$ where  $\{ a_k\}$ is nondecreasing and $\{b_k\}$ is nonincreasing. We employ the method of induction.  
We invoke \eqref{eq:holder-new-1} and  choose $a_0\le \min\big(\inf_{Q_1}\xi(t,x),\inf_{Q_1}\zeta(t,x),\inf_{Q_1}\eta(t,x)\big)$ and $b_0\ge \max\big(\sup_{Q_1}\xi(t,x),\sup_{Q_1}\zeta(t,x),\sup_{Q_1}\eta(t,x)\big)$ such that $b_0-a_0 = 2$. We now assume that the sequence $(a_m, b_m)$ has been constructed up to some index $k$. We want to show the existence of $(a_{k+1}, b_{k+1})$.To this end, define
\begin{align*}
 \xi_k(t,x)= \big(\xi(r^kt,r^kx)-\frac{a_k+b_k}{2}\big)r^{-\alpha k} ,\\
 \zeta_k(t,x)= \big(\zeta(r^kt,r^kx)-\frac{a_k+b_k}{2}\big)r^{-\alpha k},\\
  \eta_k(t,x)= \big(\eta(r^kt,r^kx)-\frac{a_k+b_k}{2}\big)r^{-\alpha k}.
\end{align*} In view of \eqref{eq:transformation-1.2} and \eqref{eq:transformation-2.2}, 
\begin{align}
\label{eq:revision-new-5} \xi_k(t,x) = \eta_{k}(t,x) -r^{-\alpha k}F_\lambda(r^k t)\quad \text{and}\quad  \zeta_k(t,x) = \eta_{k}(t,x) +r^{-\alpha k}F_\lambda(r^k t).
\end{align}

\vspace{.2cm}
\noindent{\bf Claim 1}: $Q_1 = \big\{(s,y)\in Q_1: \xi_k(s,y)\le 0\big\}\bigcup  \big\{(s,y)\in Q_1: \zeta_k(s,y)\ge  0\big\}$. 

\noindent{\it Justification}: Note that $r^{-\alpha k} F_\lambda(r^kt)$ is nonnegative, and for given $(t,x)\in Q_1$ either $\eta_k(t,x) \le r^{-\alpha k} F_\lambda(r^kt)  $ or $\eta_k(t,x) \ge r^{-\alpha k} F_\lambda(r^kt)  $.  If $\eta_k(t,x) \le r^{-\alpha k} F_\lambda(r^kt)  $, then $(t,x)\in \big\{(s,y)\in Q_1: \xi_k(s,y)\le 0\big\}$. Otherwise, since $r^{-\alpha k} F_\lambda(r^kt)$ is nonnegative, $\eta_k(t,x) \ge r^{-\alpha k} F_\lambda(r^kt) $ implies $\eta_k(t,x) \ge -r^{-\alpha k} F_\lambda(r^kt) $ i.e. $ (t,x)\in\big\{(s,y)\in Q_1: \zeta_k(s,y)\ge  0\big\}$, and the claim follows.

\vspace{.2cm}
\noindent{\bf Claim 2}: It holds that 
\begin{align*}
 &\max\{|\xi_k(t,x)|, |\zeta_k(t,x)|\} \le 1 \quad \text{in}\quad Q_1,
\end{align*} and 
\begin{align}
\label{eq:holder_oscilllation}\max\{|\xi_k(t,x)|, |\zeta_k(t,x)|\} \le 2|r^{-1}x|^{\alpha }-1 \quad\quad\text{if}\quad\quad |x| >1.
\end{align}

\noindent{\it Justification}: The first part of the claim is a simple consequence of the induction hypothesis on $(a_k, b_k)$. For the second part we argue as follows: let $m\in \{1,2........, k\}$ be an integer and $(t,x)\in Q_{r^{-m}}$. Then $(r^k t, r^k x)\in Q_{r^{k-m}}$, and 
\begin{align*}
  \xi_k(t,x) &= \big(\xi(r^k t, r^k x)-\frac{b_k-a_k}{2} -a_k\big)r^{-\alpha k}\\
                  & =  \big(\xi(r^k t, r^k x) -a_k\big)r^{-\alpha k} -1\\
                  & \le  \big(\xi(r^k t, r^k x) -a_{k-m}\big)r^{-\alpha k} -1\\
                  &\le 2 r^{\alpha(k-m)} r^{-\alpha k} -1 = 2r^{-\alpha m}-1.
\end{align*} Also,
   
\begin{align*}
 - \xi_k(t,x) &= -\big(\xi(r^k t, r^k x)-\frac{a_k-b_k}{2} -b_k\big)r^{-\alpha k}\\
                  & =  \big(b_k-\xi(r^k t, r^k x)\big)r^{-\alpha k} -1\\
                  & \le  \big(b_{k-m}-\xi(r^k t, r^k x) \big)r^{-\alpha k} -1\\
                  &\le 2 r^{\alpha(k-m)} r^{-\alpha k} -1 = 2r^{-\alpha m}-1.
\end{align*} Therefore 
\begin{align*}
   | \xi_k(t,x)|\le 2 r^{-\alpha m} -1\quad \text{if}\quad (t,x)\in Q_{r^{-m}}, ~ m\in \{1,2........, k\}.
\end{align*} Similarly
\begin{align*}
   | \zeta_k(t,x)|\le 2 r^{-\alpha m} -1\quad \text{if}\quad (t,x)\in Q_{r^{-m}}, ~ m\in \{1,2........, k\}.
\end{align*} Combining, we obtain
 \begin{align} 
 \label{eq:revision-new-2.1}  \quad \quad& \max\{|\xi_k(t,x)|, |\zeta_k(t,x)|\} \le 2r^{-\alpha m}-1 \quad \text{in}\quad [-1,0]\times B_{r^{-m}}
\end{align} for $m =1,2,......,k$. Moreover, it also holds that 

 \begin{align} 
  \label{eq:revision-new-3.1}  \quad \quad& \max\{|\xi_k(t,x)|, |\zeta_k(t,x)|\} \le 2r^{-\alpha k}-1 \quad \text{in}\quad [-1,0]\times \R^n.
\end{align} Therefore, from \eqref{eq:revision-new-3.1} we conclude
\begin{align}
\label{eq:holder_oscilllation_1}\max\{|\xi_k(t,x)|, |\zeta_k(t,x)|\} \le 2|r^{-1}x|^{\alpha }-1 \quad\quad\text{if}\quad\quad |x| \ge r^{-(k-1)}.
\end{align} We now simply  combine \eqref{eq:revision-new-2.1} and \eqref{eq:holder_oscilllation_1} and complete the justification.
\vspace{.2cm}
Therefore, by Theorem \ref{thm:diminishing_oscilation}, there are universal constants $\alpha_0\in (0,1)$ and $\theta\in (0,1)$ (depending on $n, A, \epsilon$) such that if \eqref{eq:holder_oscilllation}
holds for any $0<\alpha < \alpha_0$, then
 \begin{align}
 \label{eq:revision-new-2} \text{either}\quad\quad \text{osc}_{Q_r} \xi_k \le 2(1-\theta)\quad\quad \text{or}\quad \quad \text{osc}_{Q_r} \zeta_k \le 2(1-\theta).
 \end{align}

\vspace{.2cm}
\noindent{\bf Claim 3}: As an implication of \eqref{eq:revision-new-2}, it holds that 
\begin{align}
 \label{eq:revision-new-3}\max\Big(\text{osc}_{Q_{r}}\xi_k, \text{osc}_{Q_{r}}\zeta_k, \text{osc}_{Q_{r }}\eta_k\Big)\le 2(1-\theta) + \frac{2r B T}{ \big(||u||_{L^\infty} + 2 C(B,T,\lambda) +C_1(A, B, n, \epsilon)\big)}.
\end{align}

\noindent{\it Justification}:  When the condition \eqref{eq:revision-new-2} is satisfied, without loss of generality we may assume that 
 \begin{align}
 \label{eq:revision-new-4} \quad\quad \text{osc}_{Q_r} \xi_k \le 2(1-\theta).
 \end{align} Then, by \eqref{eq:revision-new-5},
\begin{align*}
 \eta_{k}(t,x) =\xi_k(t,x) +r^{-\alpha k}F_\lambda(r^k t)\quad \text{and}\quad  \zeta_k(t,x) = \xi_{k}(t,x) +2 r^{-\alpha k}F_\lambda(r^k t).
\end{align*} Now,
               \begin{align}
              \notag  \text{osc}_{Q_{r}}\zeta_k = &\sup_{(t,x), (s,y)\in Q_r}|\zeta_k(t,x)-\zeta_k(s,y)|\\
               \notag                                             \le & \sup_{(t,x), (s,y)\in Q_r}|\xi_k(t,x)-\xi_k(s,y)| + \sup_{t,s \in [-r,0]} 2r^{-\alpha k}|F_\lambda(r^k t)-F_\lambda(r^k s)|\\
                   \notag                                         \le &   \text{osc}_{Q_{r}}\xi_k + 2r^{-\alpha k}\big( \sup_{\tau\in [-1,0] }|F_\lambda^\prime(\tau)|\big)\sup_{t,s \in [-r,0]} |r^k t-r^k s|\\
                    \notag                                        = &   \text{osc}_{Q_{r}}\xi_k+ 2\big( \sup_{\tau\in [-1,0] }|F_\lambda^\prime(\tau)|\big) r^{(1-\alpha)k} \sup_{t,s \in [-r,0]} |t-s |\\
                  \notag                                    \le & \text{osc}_{Q_{r}}\xi_k+ 2\big( \sup_{\tau\in [-1,0] }|F_\lambda^\prime(\tau)|\big) r\\
                     \label{eq:revision-new-6}           ( \text{By}~\eqref{eq:revision-new-1})\quad\quad                               \le & 2(1-\theta)+  \frac{2r B T}{ \big(||u||_{L^\infty} + 2 C(B,T,\lambda) +C_1(A, B, n, \epsilon)\big)}.
               \end{align}
Furthermore, 
 \begin{align}
              \notag  \text{osc}_{Q_{r}}\eta_k = &\sup_{(t,x), (s,y)\in Q_r}|\eta_k(t,x)-\eta_k(s,y)|\\
                \notag                                            \le & \sup_{(t,x), (s,y)\in Q_r}|\xi_k(t,x)-\xi_k(s,y)| + \sup_{t,s \in [-r,0]} r^{-\alpha k}|F_\lambda(r^k t)-F_\lambda(r^k s)|\\
                    \notag                                        \le &   \text{osc}_{Q_{r}}\xi_k +\big( \sup_{\tau\in [-1,0] }|F_\lambda^\prime(\tau)|\big)r^{(1-\alpha) k}\sup_{t,s \in [-r,0]} | t-s|\\
                    \label{eq:revision-new-7}      ( \text{By}~ \eqref{eq:revision-new-1})\quad\quad                               \le & 2(1-\theta)+  \frac{r B T}{ \big(||u||_{L^\infty} + 2 C(B,T,\lambda) +C_1(A, B, n, \epsilon)\big)}.
               \end{align} The claim follows simply by combining \eqref{eq:revision-new-7}, \eqref{eq:revision-new-6} and \eqref{eq:revision-new-4}.
  \vspace{.2cm}             
               
               Now choose $C_1(A, B, n, \epsilon)$ big enough such that
\begin{align*}
 \frac{2r B T}{ \big(||u||_{L^\infty} + 2 C(B,T,\lambda) +C_1(A, B, n, \epsilon)\big)} \le \theta.
\end{align*} Then, by \eqref{eq:revision-new-3}

\begin{align*}
\max\Big(\text{osc}_{Q_{r}}\xi_k(t,x), \text{osc}_{Q_{r}}\zeta_k(t,x),\text{osc}_{Q_{r }}\eta_k(t,x)\Big)\le 2(1-\frac{\theta}{2}).
\end{align*} Now choose $\alpha $, possibly strictly smaller than $\alpha_0$, such that
\begin{align*}
 \big(1-\frac{\theta}{2}\big) \le r^\alpha.
\end{align*} Therefore we finally have
\begin{align*}
\max\Big(\text{osc}_{Q_{r}}\xi_k(t,x), \text{osc}_{Q_{r}}\zeta_k(t,x),\text{osc}_{Q_{r }}\eta_k(t,x)\Big)\le 2 r^\alpha.
\end{align*} In other words

\begin{align}
\label{eq:eq:holder-new-2}\max\Big(\text{osc}_{Q_{r^{k+1}}}\xi(t,x), \text{osc}_{Q_{r^{k+1}}}\zeta(t,x),\text{osc}_{Q_{r^{k+1}}}\eta(t,x)\Big)\le 2 r^{\alpha(k+1)}.
\end{align}  Therefore the pair $(a_{k+1}, b_{k+1})$ could also be chosen. 

\end{proof}

      With the Theorem \ref{thm:linearized} and Lemma \ref{lemma: equation for the finte difference}  at  our disposal,  we can now follow the line of argument by Silvestre\cite{silvestre:2010} and prove Theorem \ref{thm:regularity_theorem}.

 \begin{proof}[Proof of Theorem \ref{thm:regularity_theorem}]
    Let  $l\in \R^n$ be a unit vector and $h$ be a nonzero constant. The difference quotient of $u^\epsilon$ along $l$ at $(t,x)$ is defined as

 \begin{align*}
 \partial_{h,l}u^\epsilon(t,x) = \frac{u^\epsilon(t,x+hl)-u^{\epsilon}(t,x)}{|h|}.
 \end{align*} For fixed $ h$ and $l$, the function $\partial_{h,l}u^\epsilon(t,x)$ is continuous and bounded by $||\nabla_x u^\epsilon(t,\cdot)||_{L^\infty}$, which is bounded above by a constant independent of $\epsilon$.  We now recall Lemma \ref{lemma: equation for the finte difference} and see that $u = \partial_{h,l}u^\epsilon(x) $ satisfies all the hypotheses of Theorem \ref{thm:linearized}. Therefore, there exists a positive constant $\alpha $ such that 
  \begin{align*}
   ||\partial_{h,l}u^\epsilon(t,x)||_{C^{0,\alpha}([\frac{t}{2},t]\times \R^n)} \le \frac{C(||\nabla u_0||_{L^\infty} + K)}{t^\alpha}
  \end{align*} uniformly in $h$ and $l$. We now apply Arzela-Ascoli's theorem and pass to the limit $h\rightarrow 0$ and conclude that $\partial_l u^\epsilon(t,x)$ exists and
   \begin{align*}
   ||\partial_{l}u^\epsilon(t,x)||_{C^{0,\alpha}([\frac{t}{2},t]\times \R^n)} \le \frac{C(||\nabla u_0||_{L^\infty} + K)}{t^\alpha}
  \end{align*} for all unit vectors $l\in \R^n $. In other words

    \begin{align*}
   ||\nabla u^\epsilon(t,x)||_{C^\alpha([\frac{t}{2},t]\times \R^n)} \le \frac{C(||\nabla u_0||_{L^\infty} + K)}{t^\alpha}.
  \end{align*} With this information at hand , we see that $u^\epsilon(t,x)$ is $C^{1,\alpha}$ in space for any fixed $t >0$. Therefore \newline$H(t,x,u, \nabla u) + \epsilon (-\Delta)^{\frac{1}{2}} u^\epsilon$ is bounded. Hence $\partial_t u^\epsilon$ is bounded for $t> 0$ i.e. the difference quotients in $t$
     \begin{align*}
     \partial_h u(t,x) = \frac{u(t+h,x)- u(t,x)}{h}
     \end{align*} is bounded independently of $h$. Also, it is easy see that $\partial_h u(t,x)$ satisfies all the hypothesis of Theorem \ref{thm:linearized} . Therefore $ \partial_h u(t,x)$ is bounded in $C^{0,\alpha}$ uniformly in $h$ and consequently $ \partial_t u(t,x)$ is $C^{0,\alpha}$ with the following estimate
      \begin{align*}
   ||\partial_{t}u^\epsilon||_{C^\alpha([\frac{t}{2},t]\times \R^n)} \le \frac{C(||\nabla u_0||_{L^\infty} + K)}{t^\alpha}.
  \end{align*}

 \end{proof}

   \section{Error estimate for vanishing viscosity approximation}
\begin{proof}[Proof of Theorem \ref{thm:vanishing viscosity rate}]
    The proof uses the  doubling of variables technique, wellknown in the viscosity solution theory.

     Let
     \begin{align}
     \label{eq:test-function-defi} \Phi(x,y) = \frac{\vartheta}{2} |x-y|^2 +\beta^2 |x|^2,
     \end{align} where $\vartheta$ and $\beta$ are two positive constants (to be chosen later), and define
          \begin{align*}
          \sigma_0 &=  \sup_{t\in [0,T], x, y\in\R^n}\Big\{u_0(x)-u_0(y)-\Phi(x,y)-\frac{\gamma}{T}\Big\}^+\\
       \sigma &= \sup_{t\in [0,T], x, y\in\R^n}\Big\{u(t,x)-u^\epsilon(t,y)-\Phi(x,y)-\frac{\gamma}{T-t}\Big\}-\sigma_0,
     \end{align*} where $\gamma \in (0,1)$. Next, we introduce the quantity

     \begin{align}
          \label{eq:indrouce-Psi} \Psi(t,x,y) = u(t,x)-u^\epsilon(t,y)- \Phi(x,y)-\frac{\delta\sigma}{T}t -\frac{\gamma}{T-t},
     \end{align} where $\delta \in (0,1)$. Recall that $u(t,x)$ and $u^\epsilon(t,y)$ are bounded and continuous functions. Therefore, thanks to
      the penalization term $\frac{\gamma}{T-t}$, there exists $(t_0,x_0,y_0)\in [0,T)\times \R^n\times \R^n$ such that
      \begin{align}
     \label{eq:re-revise} \Psi(t_0,x_0,y_0) = \sup_{t,x,y} \Psi(t,x,y).
      \end{align} We are interested in finding an upper bound on $\sigma+\sigma_0$ by deriving a positive upper bound on $\sigma$. Therefore, without loss of generality, we may assume that $\sigma > 0$. This implies $t_0 > 0$, since 
      
               \begin{align*}
      \Psi(t_0,x_0,y_0) \ge \sup_{t\in [0,T], x, y\in\R^n}\Big\{u(t,x)-u^\epsilon(t,y)-\Phi(x,y)-\frac{\gamma}{T-t}\Big\}- \delta\sigma = \sigma_0+ (1-\delta)\sigma > \sigma_0,
      \end{align*} while on the other hand $t_0 = 0$ would imply that $  \Psi(t_0,x_0,y_0) \le \sigma_0$.
      
       Therefore we can apply the maximum principle for semicontinuous functions, adapted to IPDEs\cite{Jakobsen:2005jy}, and conclude:

         For each $\kappa \in (0,1)$, there exists $a, b\in \R$ satisfying
         \begin{align*}
          a-b = \frac{\delta\sigma}{T}+\frac{\gamma}{(T-t_0)^2},
         \end{align*} such that

         \begin{align}
        \label{eq:semicont-1} &a+H(t_0,x_0,u(t_0,x_0) ,\nabla_x\Phi(x_0,y_0)) \le 0\\
        \label{eq:semicont-2} &b+H(t_0,y_0, u^\epsilon(t_0,y_0), -\nabla_y\Phi\big(x_0,y_0)\big)+ \mathcal{I}_{\kappa}^\eps\big(u^\epsilon(t_0, y_0)\big) +\mathcal{I}^{\eps,\kappa}\big(u^\epsilon(t_0, y_0)\big) \ge 0.
         \end{align}  We simply choose $\kappa=\epsilon$ in \eqref{eq:semicont-2}, and subtract it from \eqref{eq:semicont-1} to obtain

         \begin{align}
       \notag  \frac{\delta\sigma}{T}+\frac{\gamma}{(T-t_0)^2}+H(t_0,x_0,u(t_0,x_0), \nabla_x\Phi(x_0,y_0))-H(t_0,y_0, u^\epsilon(t_0,y_0), -\nabla_y\Phi\big(x_0,y_0)\big)\\
         \label{eq:semicont-3} - \mathcal{I}_{\epsilon}^\eps\big(u^\epsilon(t_0,y_0)\big) -\mathcal{I}^{\eps,\eps}\big(u^\epsilon(t_0,y_0)\big)\le 0.
         \end{align}

         We now denote $q_0=\vartheta(x_0-y_0)$ and  make the following claims:
         
          \vspace{.3cm} 
         \noindent{\bf Claim $1$:} It holds that
             \begin{align}
       \label{eq:re-revise-4}  |\mathcal{I}^{\eps,\eps}\big(u^\epsilon(t_0,y_0)\big)|\le C\epsilon(1+ |\log \epsilon|) \le  C\epsilon |\log \epsilon| \quad\text{if} \quad \epsilon \le e^{-1}.
          \end{align}
         
         \noindent{\bf Claim $2$:} For any $z\in \R^n$, it holds that  
         \begin{align}
                        \label{eq:re-revise-1}                    u_\epsilon(t_0,y_0+z)-u_\epsilon(t_0,y_0)-q_0.z \ge -\frac{\vartheta}{2}|z|^2.
         \end{align} For the time being we simply assume  \eqref{eq:re-revise-4}-\eqref{eq:re-revise-1} and proceed, a detailed justification will be provided at a later stage. 
         \vspace{.3cm}

          Now make the substitution $r=\frac{z}{\epsilon}$ and obtain

           \begin{align} \notag- \mathcal{I}_{\epsilon}^\eps\big(u^\epsilon(t_0,y_0)\big)= &\epsilon C(n,1) \int_{|z| \le \epsilon} \big[ u^\epsilon(t_0,y_0+z)-u^\epsilon(t_0,y_0) \big]\frac{1}{|z|^{n+1}} \,dz\\
      \notag =&\epsilon C(n,1) \int_{|z| \le \epsilon} \big[ u^\epsilon(t_0,y_0+z)-u^\epsilon(t_0,y_0)-q_0.z \big]\frac{1}{|z|^{n+1}} \,dz\\
         \notag  =&  C(n,1)  \int_{B(0,1)} \big[ u^\epsilon(t_0,y_0+\epsilon r)-u^\epsilon(t_0, y_0)-\epsilon q_0. r \big]\frac{1}{ |r|^{n+1}}\,dr\\
         \label{eq:rate_new _1}  \ge& - C \vartheta\epsilon^2,
          \end{align}where we have used \eqref{eq:re-revise-1}.

         Moreover, 
         $u(t_0,x_0) -u^\epsilon(t_0,y_0) \ge \frac{\delta \sigma t_0}{T}+ \frac{\gamma}{T-t_0} -\frac{\gamma}{T} \ge 0$ as $\sigma \ge 0$. Therefore, by the monotonicity and the Lipschitz continuity of $H$, we must have

     \begin{align}
     \label{eq:semicont-4} H(t_0,x_0,u(t_0,x_0),D_x\Phi(x_0,y_0))-H(t_0,y_0, u^\epsilon(t_0,y_0), -D_y\Phi\big(x_0,y_0)\big) \ge -K|x_0-y_0| -K\beta |x_0|.
     \end{align} Furthermore, $\nabla_y u^\epsilon(t_0,y_0) = \vartheta(y_0-x_0)$ i.e $\vartheta |x_0-y_0| \le ||\nabla u^\epsilon||_\infty \le C$. Also, it is easy to see that $\beta |x_0|^2 < C$ i.e $\beta |x_0| \le C\sqrt{\beta}$.  To this end, we combine \eqref{eq:semicont-3}-\eqref{eq:semicont-4} and obtain 
     
        \begin{align}
     \label{eq:semicont-6}  \frac{\delta\sigma}{T}\le -\frac{\gamma}{(T-t_0)^2} + \frac{K}{\vartheta} +K\sqrt{\beta} +C\epsilon|\log \epsilon|+C\vartheta\epsilon^2.
      \end{align} In addition, by Lipschitz continuity of $u_0(x)$ we have 
         \begin{align}
            \label{eq:semicont-7}   \sigma_0 \le \sup_{x,y\in\R^n} \big(K|x-y|-\frac{\vartheta}{2} |x-y|^2\big) = \frac{2K^2}{\vartheta}.
         \end{align} 
         Therefore
         \begin{align}
     \label{eq:semicont-8}  \frac{\delta(\sigma+\sigma_0)}{T}\le -\frac{\gamma}{(T-t_0)^2} + \frac{K^\prime}{\vartheta} +K^\prime\sqrt{\beta} +C\epsilon|\log \epsilon|+C\vartheta\epsilon^2.
      \end{align}
      
       Now choose $\beta = \frac{\gamma^2}{{K^{\prime}}^2T^2}$, and maximize the right hand side with respect to $\vartheta$ to obtain,

      \begin{align*}
       \frac{\delta(\sigma+\sigma_0)}{T} \le C\epsilon(1+|\log \eps|) \quad(\le C \eps|\log \eps|)\quad \text{if}\quad \eps \le e^{-1}.
      \end{align*} We now let $\delta \uparrow 1$, and conclude
      \begin{align*}
      u(t,x)-u^\epsilon(t,x)- \frac{\gamma^2}{K^2T^2}|x|^2-\frac{\gamma}{T-t} \le C \eps |\log \eps|.
      \end{align*} Finally, we let $\gamma \downarrow 0$ in the above inequality to conclude

 \begin{align}
     \label{eq:semicont-9} u(t,x)-u^\epsilon(t,x) \le C \eps |\log \eps|.
      \end{align} The inequality \eqref{eq:semicont-9} provides only half of requirement, the other half could also be concluded in a similar manner. Therefore the proof would be complete if we are able to justify {\bf Claim 1} and {\bf Claim 2}.

      \vspace{.2cm}
        \noindent{\it Justification of  Claim $1$:} 
          Let $K$ be the constant mentioned in \ref{A1}. Thanks to  \ref{A2}, the functions $-(Kt+||u_0||_{L^\infty})$ and $(Kt+||u_0||_{L^\infty})$ are respectively sub and super solutions of \eqref{eq:HJB-eq-viscous}. Hence by comparison principle 
          $ -(Kt+||u_0||_{L^\infty}) \le u^{\epsilon}(t,x) \le (Kt+||u_0||_{L^\infty})$ 
         i.e 
         \begin{align}
              \label{eq:global-bound}  ||u^{\epsilon}(t,x)||_{L^{\infty}([0,T]\times \R^n)} \le KT+ ||u_0||_{L^\infty}~\text{for all}~\epsilon\in [0,1].
         \end{align} 
        Moreover by Lemma \ref{lip}   there is  constant $L$ such that 
           \begin{align}
              \label{eq:global-bound-1}  ||\nabla_xu^{\epsilon}(t,\cdot)||_{L^{\infty}([0,T]\times \R^n)} \le L~\text{for all}~\epsilon\in [0,1].
         \end{align} Now 
         \begin{align*}
              &\Big|    \int_{|z|> \epsilon} \frac{u^\epsilon(t_0, y_0+z)-u^\epsilon(t_0, y_0)}{|z|^{n+1}}\,dz\Big| \\
              \le & \int_{ \epsilon\le |z|\le 1} \frac{|u^\epsilon(t_0, y_0+z)-u^\epsilon(t_0, y_0)|}{|z|^{n+1}}\,dz+\int_{  |z|\ge 1} \frac{|u^\epsilon(t_0, y_0+z)-u^\epsilon(t_0, y_0)|}{|z|^{n+1}}\,dz\\
              \le&   ||\nabla_xu^{\epsilon}(t,\cdot)||_{L^{\infty}([0,T]\times \R^n)}  \int_{ \epsilon\le |z|\le 1} \frac{\, dz}{|z|^{n}} +2  ||u^{\epsilon}(t,x)||_{L^{\infty}([0,T]\times \R^n)}  \int_{  |z|\ge 1} \frac{\, dz}{|z|^{n+1}}\\
                &(\text{By polar transform and \eqref{eq:global-bound}-\eqref{eq:global-bound-1}})\\
            \le & C \int_{\eps < r\le 1} \frac{\,dr}{r} + C^{\prime} \int_{ r> 1} \frac{\,dr}{r^2}\\
             \le&  C (|\log \eps| + 1). 
         \end{align*}  
      Therefore
            \begin{align*}
        & |\mathcal{I}^{\eps,\eps}\big(u^\epsilon(t_0,y_0)\big)|\notag  \\ &=  \epsilon C(n,1) \Big|    \int_{|z|> \epsilon} \frac{u^\epsilon(t_0, y_0+z)-u^\epsilon(t_0, y_0)}{|z|^{n+1}}\,dz\Big| \le C\epsilon(1+ |\log \epsilon|) \le  C\epsilon |\log \epsilon| \quad\text{if} \quad \epsilon \le e^{-1},
          \end{align*} which is what we wanted to show. 

\vspace{.2cm}      
      
 \noindent{\it Justification of  Claim $2$:} We recall \eqref{eq:re-revise} and note that $(t_0, x_0, y_0)$ is a point of global maximum of $\Psi(t,x,y)$. Hence for any $z\in\R^n$,
              \begin{align*}
               \Psi(t_0, x_0, y_0)& \ge \Psi(t_0,x_0, y_0+z)\\
               i.e \hspace{2cm} u(t_0, x_0)-u^\epsilon(t_0, y_0)-\Phi(x_0, y_0) 
               &\ge  u(t_0, x_0)-u^\epsilon(t_0, y_0+z)-\Phi(x_0, y_0+z) .\\
           i.e \hspace{2.5cm}    u^\epsilon(t_0, y_0+z)-u^\epsilon(t_0, y_0) \ge& -\Phi(x_0, y_0+z)+ \Phi(x_0, y_0)\\
                                 & = \frac{\vartheta}{2}|x_0-y_0|^2 -  \frac{\vartheta}{2}|(x_0-y_0)-z|^2\\
                                 & = \vartheta  (x_0-y_0). z\ -\frac{\vartheta}{2} |z|^2\\
                                 & = q_0. z-\frac{\vartheta}{2} |z|^2.
              \end{align*} In other words
              \begin{align*}
                u^\epsilon(t_0, y_0+z)-u^\epsilon(t_0, y_0) -q_0.z \ge -\frac{\vartheta}{2} |z|^2,
                \end{align*} which is exactly what we had claimed.

     \end{proof}

      \section*{Acknowledgement}
        We thank the anonymous referee for his/her comments and suggestions that have helped to improve this article.

\end{document}